\newtheorem{theorem}{Theorem}[section]
\newtheorem{lemma}[theorem]{Lemma}
\newtheorem{definition}[theorem]{Definition}
\newtheorem{proposition}[theorem]{Proposition}
\newtheorem{corollary}[theorem]{Corollary}
\newtheorem{remark}[theorem]{Remark}
\newtheorem*{theorem*}{\it Theorem}
\def\vint_#1{\mathchoice%
	{\mathop{\kern 0.2em\vrule width 0.6em height 0.69678ex depth -0.58065ex
			\kern -0.8em \intop}\nolimits_{\kern -0.4em#1}}%
	{\mathop{\kern 0.1em\vrule width 0.5em height 0.69678ex depth -0.60387ex
			\kern -0.6em \intop}\nolimits_{#1}}%
	{\mathop{\kern 0.1em\vrule width 0.5em height 0.69678ex
			depth -0.60387ex
			\kern -0.6em \intop}\nolimits_{#1}}%
	{\mathop{\kern 0.1em\vrule width 0.5em height 0.69678ex depth -0.60387ex
			\kern -0.6em \intop}\nolimits_{#1}}}
\def\vintslides_#1{\mathchoice%
	{\mathop{\kern 0.1em\vrule width 0.5em height 0.697ex depth -0.581ex
			\kern -0.6em \intop}\nolimits_{\kern -0.4em#1}}%
	{\mathop{\kern 0.1em\vrule width 0.3em height 0.697ex depth -0.604ex
			\kern -0.4em \intop}\nolimits_{#1}}%
	{\mathop{\kern 0.1em\vrule width 0.3em height 0.697ex depth -0.604ex
			\kern -0.4em \intop}\nolimits_{#1}}%
	{\mathop{\kern 0.1em\vrule width 0.3em height 0.697ex depth -0.604ex
			\kern -0.4em \intop}\nolimits_{#1}}}
\def\R{\mathbb R}
\def\N{\mathbb N}
\numberwithin{equation}{section}
\def\1{\raisebox{2pt}{\rm{$\chi$}}}
\def\Xint#1{\mathchoice
	{\XXint\displaystyle\textstyle{#1}}%
	{\XXint\textstyle\scriptstyle{#1}}%
	{\XXint\scriptstyle\scriptscriptstyle{#1}}%
	{\XXint\scriptscriptstyle\scriptscriptstyle{#1}}%
	\!\int}
\def\XXint#1#2#3{{\setbox0=\hbox{$#1{#2#3}{\int}$}
		\vcenter{\hbox{$#2#3$}}\kern-.5\wd0}}
\def\dashint{\Xint-}
\def\Xint#1{\mathchoice
	{\XXint\displaystyle\textstyle{#1}}%
	{\XXint\textstyle\scriptstyle{#1}}%
	{\XXint\scriptstyle\scriptscriptstyle{#1}}%
	{\XXint\scriptscriptstyle\scriptscriptstyle{#1}}%
	\!\int}
\def\XXint#1#2#3{{\setbox0=\hbox{$#1{#2#3}{\int}$}
		\vcenter{\hbox{$#2#3$}}\kern-.5\wd0}}
\def\dashint{\Xint-}
\newcommand{\twopartdef}[4]
{
	\left\{
	\begin{array}{ll}
		#1 & #2 \\
		#3 & #4
	\end{array}
	\right.
}
\newcommand{\threepartdef}[6]
{
	\left\{
	\begin{array}{lll}
		#1 & \mbox{if } #2 \\
		#3 & \mbox{if } #4 \\
		#5 & \mbox{if } #6
	\end{array}
	\right.
}
\definecolor{violet(ryb)}{rgb}{0.53, 0.0, 0.69}
\begin{document}
	
\title[$p$-Laplacian gradient flow in metric measure spaces]{ On the $p$-Laplacian evolution equation in metric measure spaces}
	
\author[W. G\'{o}rny and J. M. Maz\'on]{Wojciech G\'{o}rny and Jos\'e M. Maz\'on}
	
\address{ W. G\'{o}rny: Faculty of Mathematics, Universit\"at Wien, Oskar-Morgerstern-Platz 1, 1090 Vienna, Austria; Faculty of Mathematics, Informatics and Mechanics, University of Warsaw, Banacha 2, 02-097 Warsaw, Poland
\hfill\break\indent
{\tt  wojciech.gorny@univie.ac.at }
}
	
\address{J. M. Maz\'{o}n: Departamento de An\`{a}lisis Matem\`atico,
Universitat de Val\`encia, Dr. Moliner 50, 46100 Burjassot, Spain.
\hfill\break\indent
{\tt mazon@uv.es }}
	
%
%
	
\keywords{Metric measure spaces, Nonsmooth analysis, Gradient flows, Cheeger energy, Heat flow, $p$-Laplacian, Gauss-Green formula, Total variation flow. \\
\indent 2020 {\it Mathematics Subject Classification:} 49J52, 58J35, 35K90, 35K92.
}
	
\setcounter{tocdepth}{1}

\date{\today}

\begin{abstract}
The  $p$-Laplacian evolution equation in metric measure spaces has been studied as the gradient flow in $L^2$ of the $p$-Cheeger energy (for $1 < p < \infty$). In this paper,  using the first-order differential structure on a metric measure space introduced by Gigli, we characterize the subdifferential in $L^2$ of the $p$-Cheeger energy. This gives rise to a new definition of the $p$-Laplacian operator in metric measure spaces, which allows us to work with this operator in more detail. In this way, we introduce a new notion of solutions to the $p$-Laplacian evolution equation in metric measure spaces.  For $p = 1$, we obtain a Green-Gauss formula similar to the one by Anzellotti for Euclidean spaces, and use it to characterise the $1$-Laplacian operator and study the total variation flow. We also study the asymptotic behaviour of the solutions of the $p$-Laplacian evolution equation, showing that for $1 \leq p < 2$ we have finite extinction time.
\end{abstract}
	
\maketitle
	
%
%
%
%
%
%

{\renewcommand\contentsname{Contents }
\setcounter{tocdepth}{3}
\tableofcontents}

\section{Introduction}

The gradient flows can be regarded as the paradigm of dissipative evolution and have hence attracted a constant attention during the last four decades starting
from the fundamental work by K\"omura \cite{K}, Crandall-Pazy \cite{CP}, and Brezis \cite{Brezis0} about gradient flows of convex functionals in Hilbert spaces. This gives rise to a notion of solutions to evolution equations which are gradient flows of said functional; furthermore, solutions constructed in this way have regularising properties. For an exhaustive treatment of this subject, see \cite{Brezis}. More recently, following the pioneering work by Otto \cite{Otto}, an even larger class of PDE problems have been translated into gradient flows by resorting to probability spaces endowed with the Wasserstein metric (see also the monograph by Ambrosio, Gigli and Savar\'e \cite{AGSBook}).

The study of gradient flows in metric spaces faces some additional difficulties (for a standard reference, see \cite{AGSBook}). The standard problem of this type is the heat flow. Using the semigroup approach, it has been studied by Ambrosio, Gigli and Savar\'{e} in \cite{AGS2}: in a metric measure space $(\mathbb{X},d,\nu)$, the authors define it as the gradient flow in $L^2(\mathbb{X},\nu)$ of the Dirichlet-Cheeger energy and study its properties under the assumption that $(\mathbb{X},d,\nu)$ has Ricci curvature bounded from below. Then, this gradient flow coincides with gradient flow of the Bolztmann entropy with respect to the Wasserstein distance in the space of probabilities. The same authors in \cite{AGS1} (see also \cite{Kell}) studied the case of the $p$-Heat flow as the gradient flow in $L^2(\mathbb{X},\nu)$ of the $p$-Cheeger energy and Kell in \cite{Kell} showed that the $p$-Heat flow is the gradient flow of the Renyi entropy in the $q$-Wasserstein space, where $\frac{1}{p} + \frac{1}{q} = 1$.

In these papers the gradient flow in $L^2(\mathbb{X},\nu)$ is defined in the framework of maximal monotone operator in Hilbert spaces and the corresponding $p$-Laplacian operator is defined through the subdifferential of the $p$-Cheeger energy, but without giving a characterisation of it. For this reason (but also to deal with some non-convex functionals), several variational methods have been employed to characterise the solutions, the main ones being the minimising movements approach (see \cite{AGSBook}), evolution variational inequalities (also see \cite{AGSBook}) and the weighted energy-dissipation principle (see \cite{MO},\cite{RSSS}).

Our aim is to  study the gradient flows which arise from convex functionals in metric measure spaces. We introduce a new framework based on the first-order linear differential structure introduced by Gigli in \cite{Gig}. In this paper, we present it in the case of the $p$-Laplacian evolution equation for $1 < p < \infty$, i.e. the gradient flow of the $p$-Cheeger energy, and the total variation flow (for $p = 1$). We characterise the subdifferentials in $L^2(\mathbb{X},\nu)$ of the $p$-Cheeger energy and the total variation functional using this linear structure, and then apply this characterisation to introduce a new notion of solutions to the corresponding evolution problems. Furthermore, this allows us to study these operators more directly, and in particular we prove that they are completely accretive. For the sake of the presentation, we restrict ourselves to these cases; however, as we can see in the paper, this approach is quite flexible and can be possibly applied also to other settings, such as  more general convex functionals, sufficiently regular bounded domains or less regular initial data.

Let us shortly describe the contents of the paper. In Section \ref{sec:preliminaries}, we recall all the notions about analysis on metric measure spaces required in this paper. We will work under the standard assumptions that the metric space $(\mathbb{X},d)$ is complete and separable. Furthermore, we will require that $\nu$ is a nonnegative Radon measure which is finite on bounded sets. { In Section \ref{subsec:subdifferential}, we recall the notions of subdifferential in convex analysis and solutions to abstract Cauchy problems in Hilbert spaces.} In Section \ref{subsec:sobolevbv}, we recall the definitions of the Sobolev spaces $W^{1,p}(\mathbb{X},d,\nu)$ and the space of functions of bounded variation $BV(\mathbb{X},d,\nu)$ in the metric setting. Then, in Section \ref{subsec:giglistructure} we recall the construction of the first-order differential structure on the metric measure space $(\mathbb{X},d,\nu)$ introduced by Gigli (see \cite{Gig}) using the machinery of $L^p(\nu)$-normed modules.

{ In Section \ref{sec:plaplace}, we study the first example of our proposed definition of solutions to the gradient flow of a convex functional: the $p$-Laplacian evolution equation (for $1 < p < \infty$). It is understood as the gradient flow of the $p$-Cheeger energy $\mathsf{Ch}_p: L^2(\mathbb{X},\nu) \rightarrow [0,+\infty]$ defined by the formula
\begin{equation*}
\mathsf{Ch}_p(u) = \twopartdef{\displaystyle\frac{1}{p} \int_{\mathbb{X}} |Du|^p \, d\nu}{u \in W^{1,p}(\mathbb{X},d,\nu)}{+ \infty}{u \in L^2(\mathbb{X},\nu) \setminus W^{1,p}(\mathbb{X},d,\nu).}
\end{equation*}
The $p$-Cheeger energy is defined on $L^2(\mathbb{X},\nu)$ in order to use the classical Hilbertian theory of gradient flows. In order to provide a new characterisation of solutions to the gradient flow associated to $\mathsf{Ch}_p$, we study the subdifferential of $\mathsf{Ch}_p$ (and the associated $p$-Laplacian operator) and express it in terms of the differential structure introduced by Gigli in \cite{Gig}. To achieve this goal, we use the techniques of convex duality, in particular the Fenchel-Rockafellar duality theorem. Then, we use this characterisation together with the classical theory of maximal monotone operators to get existence and uniqueness of solutions to the associated gradient flow, we show that the subdifferential is completely accretive (which implies a contraction property for the gradient flow), provide a set of equivalent notions of solutions and discuss their relationship with the definitions typically encountered in the literature. Let us note that our results in this Section do not require doubling, Poincar\'e, or curvature assumptions on the metric measure space $(\mathbb{X},d,\nu)$.}

In Section \ref{sec:Anzellotti}, we make a preparation to study the total variation flow using this technique. We assume additionally that $\nu$ is doubling and that it satisfied a $(1,1)$-Poincar\'e inequality. The main goal of this Section is to introduce a metric analogue of the Anzellotti pairing between a function in $BV(\mathbb{X},d,\nu)$ and a vector field with integrable divergence, which would serve as a replacement for the differential structure due to Gigli for functions in $BV(\mathbb{X},d,\nu)$. Then, we prove that the constructed pairing satisfies a Gauss-Green formula and a co-area formula.

In Section \ref{sec:TVflow}, we study the total variation flow, which is the gradient flow of the total variation functional $\mathcal{TV}: L^2(\mathbb{X},\nu) \rightarrow [0,+\infty]$ defined by the formula
\begin{equation*}
\mathcal{TV}(u) = \twopartdef{\displaystyle \int_{\mathbb{X}} |Du|_\nu}{u \in BV(\mathbb{X},d,\nu)}{+ \infty}{u \in L^2(\mathbb{X},\nu) \setminus BV(\mathbb{X},d,\nu).}
\end{equation*}
Again, it is defined on $L^2(\mathbb{X},\nu)$ so that we can use the classical Hilbertian theory of gradient flows. We use the Gauss-Green formula developed in the previous Section to characterise the subdifferential of the total variation functional and then study the total variation flow in the metric setting. We also use the co-area formula developed in the previous Section to we show that the subdifferential is completely accretive, so that the associated gradient flow has a contraction property.

Section \ref{sec:asymptotics} is devoted to the asymptotics of the flow of the $p$-Cheeger energy and the total variation flow. { We show that the general theory developed in \cite{BB} for convex, $p$-homogenous and coercive functionals in Hilbert spaces can be applied in the situation considered in the present paper: assuming that a version of the Poincar\'e inequality or the Sobolev inequality holds, which implies coercivity of the functionals $\mathsf{Ch}_p$ and $\mathcal{TV}$, we show that a direct application of the results from \cite{BB} implies existence of a finite extinction time for $1 \leq p < 2$ and some upper bounds for the solution.}

Finally, in Section \ref{sec:particularcases} we present some important special cases in which our characterisation of solutions is more detailed. To be exact, we apply our results in two situations when there is a known characterisation of the abstract differential structure introduced by Gigli in terms of standard vector fields and $1$-forms in coordinates (see \cite{LPR,LP}). The first one is the Euclidean space $\mathbb{R}^N$ equipped with a nonnegative Radon measure (called the weighted Euclidean space) and the other one is a reversible Finsler manifold, also equipped with a nonnegative Radon measure. Let us note that the except for a few special choices of weights and exponents, the results in this paper are new also in these two particular cases.

\section{Preliminaries}\label{sec:preliminaries}

{
\subsection{Convex functions and subdifferentials}\label{subsec:subdifferential}

Let $H$ be a real Hilbert space equipped with a scalar product $\langle \cdot, \cdot \rangle_H$ and the norm $\Vert u \Vert_H = \sqrt{\langle u, u \rangle_H}$ derived from the scalar product. Given a functional $\mathcal{F} : H \to (-\infty, \infty]$, we call the set
$$D(\mathcal{F}) : = \{ u \in H \ : \ \mathcal{F}(u) < + \infty \}$$
the {\it effective domain} of $\mathcal{F}$. The functional $\mathcal{F}$ is said to be proper if $D(\mathcal{F})$ is non-empty.
Furthermore, we say that $\mathcal{F}$ is {\it lower semi-continuous} if for every $c \in \R$ the sublevel set
$$E_c := \{ u \in D(\mathcal{F}) \ : \ \mathcal{F}(u) \leq c \}$$
is closed in $H$.

Given a proper and convex functional $\mathcal{F} : H \to (-\infty, \infty]$, its {\it subdifferential} is the set
\begin{equation}\label{subdif2}\partial \mathcal{F} := \left\{(u,h) \in H \times H \ : \ \mathcal{F}(u+v) - \mathcal{F}(u) \geq \langle h, v \rangle_H \ \ \forall \, v \in H \right\}.
\end{equation}
It is a generalisation of the derivative; in the case when $\mathcal{F}$ is Fr\'{e}chet differentiable, its subdifferential is single-valued and equals the Fr\'{e}chet derivative.

Now, consider a multi-valued operator $A$ on $H$, i.e. a mapping $A: H \rightarrow 2^H$. It is standard to identify $A$ with its graph in the following way: for every $u \in H$, we set
$$ Au : = \left\{ v \in H \ : \  (u,v) \in A \right\}.$$
We denote the domain of $A$ by
$$D (A) : = \left\{ u \in H \ : \ Au \not= \emptyset \right\}$$
and the range of $A$ by
$$R(A) : = \bigcup_{u \in D(A)} Au.$$
An multi-valued operator $A$ on $H$ is {\it monotone} if
$$\langle u - \hat{u}, v - \hat{v} \rangle_H \geq 0 \quad \hbox{for all} \ (u,v), (\hat{u}, \hat{v}) \in A.$$
If there is no monotone operator which strictly contains $A$, we say that $A$ is maximal monotone. A classical example of a multi-valued operator is the subdifferential; if $\mathcal{F} : H \to (-\infty, \infty]$ is convex and lower semi-continuous, then $\partial \mathcal{F}$ is a maximal monotone multi-valued operator on $H$.

For $1 \leq p < \infty$, we denote
$$L^p(a,b; H):= \left\{ u : [a,b] \rightarrow H \ \hbox{measurable such that} \  \int_a^b \Vert u(t) \Vert_H^p dt < \infty \right\}$$
and
$$W^{1,p}(a,b;H):= \left\{ u \in L^p(a,b; H) \ \hbox{and} \ \exists \, v \in L^p(a,b; H) \ : \ u(t) - u(a) = \int_a^t v(s) ds \ \ \forall \, t \in (a,b)  \right\}.$$
By \cite[Corollaire A.2]{Brezis}, if $u \in W^{1,p}(a,b;H)$, it is differentiable for almost all $t \in (a,b)$ and
$$u(t) - u(a) = \int_a^t \frac{du}{dt}(s) \, ds \ \ \forall \, t \in (a,b).$$
We also set $W_{loc}^{1,p}(0,T;H)$ to be the space of all functions $u$ with the following property: for all $0 < a < b < T$, we have that $u \in W^{1,p}(a,b;H)$.

Consider the abstract Cauchy problem
\begin{equation}\label{ACP1}
\left\{ \begin{array}{ll} \frac{du}{dt} + \partial \mathcal{F} (u(t)) \ni 0 \, \quad &t \in (0, T),
  \\[10pt] u(0) = u_0, \quad &u_0 \in H. \end{array} \right.
\end{equation}

\begin{definition}\label{StronSol} We say that $u \in C([0,T]; H)$ is a {\it strong solution} of problem \eqref{ACP1}, if the following conditions hold: $u \in W_{loc}^{1,2}(0,T;H)$; for almost all $t \in (0,T)$ we have $u(t) \in D(\partial \mathcal{F})$; and it satisfies \eqref{ACP1}.
\end{definition}

We are now in position to state the celebrated {\it Brezis-Komura Theorem} (see \cite{Brezis}).

\begin{theorem}\label{BKTheorem} Let $\mathcal{F} : H \to (-\infty, \infty]$ be a proper, convex, and lower semi-continuous functional. Given $u_0 \in \overline{D(\mathcal{F})}$, there exists a unique strong solution of the abstract Cauchy problem \eqref{ACP1}. Moreover, we have that $\sqrt{t}\frac{du}{dt} \in L^2(0, T; H)$, and $u \in W^{1,2}(0, T; H)$ whenever $u_0 \in D(\mathcal{F})$.

If we denote by $S(t) u_0$ the unique strong solution $u(t)$ of the abstract Cauchy problem \eqref{ACP1}, then $S(t) : \overline{D(\mathcal{F})} \rightarrow H$ is a continuous semigroup satisfying the $T$-contraction property
$$\Vert (S(t) u_0 - S(t) v_0)^{\pm} \Vert \leq \Vert (u_0 -  v_0)^{\pm} \Vert \quad \forall \, t >0$$
for all $u_0, v_0 \in \overline{D(\mathcal{F})}$.
\end{theorem}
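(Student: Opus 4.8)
The plan is to recognise Theorem \ref{BKTheorem} as an instance of the general theory of evolution equations governed by maximal monotone operators, while exploiting the extra structure coming from the fact that $A:=\partial\mathcal F$ is not merely maximal monotone but the subdifferential of a convex functional. As recorded in the preliminaries, since $\mathcal F$ is proper, convex and lower semi-continuous, $A$ is maximal monotone on $H$; this is the single fact that drives the whole argument. \emph{Uniqueness} should be disposed of first, as it is immediate: if $u,\tilde u$ are two strong solutions with the same datum, then $-\frac{du}{dt}\in Au$ and $-\frac{d\tilde u}{dt}\in A\tilde u$ for a.e. $t$, so monotonicity gives $\frac{d}{dt}\|u(t)-\tilde u(t)\|_H^2 = 2\langle \frac{d}{dt}(u-\tilde u),\, u-\tilde u\rangle_H \le 0$, whence $u\equiv\tilde u$. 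The same computation applied to two different data yields at once the contraction estimate $\|S(t)u_0 - S(t)v_0\|_H\le \|u_0-v_0\|_H$, and hence the semigroup property and the continuity of $S(t)$.

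For \emph{existence} I would first treat a regular datum $u_0\in D(A)$ and then pass to the closure by density. The standard device is Yosida regularisation: by the Minty theorem the resolvent $J_\lambda:=(I+\lambda A)^{-1}$ is a well-defined single-valued contraction on all of $H$, and the Yosida approximation $A_\lambda:=\frac1\lambda(I-J_\lambda)$ is monotone and Lipschitz with constant $1/\lambda$, with $A_\lambda u\in A(J_\lambda u)$. The regularised problem $\frac{du_\lambda}{dt}+A_\lambda u_\lambda=0$, $u_\lambda(0)=u_0$, then has a unique global solution by the Cauchy--Lipschitz theorem in $H$. The heart of the matter is a priori bounds uniform in $\lambda$: using $\|A_\lambda u_0\|_H\le \|A^0 u_0\|_H$ (the element of minimal norm in $Au_0$) and testing against $\frac{du_\lambda}{dt}$ one controls $\frac{du_\lambda}{dt}$ in $L^\infty(0,T;H)$; comparing two parameters and using monotonicity one estimates $\frac{d}{dt}\|u_\lambda-u_\mu\|_H^2$ and finds that $(u_\lambda)$ is Cauchy in $C([0,T];H)$ as $\lambda\to0$. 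Its limit $u$ is the desired strong solution; that $-\frac{du}{dt}(t)\in Au(t)$ for a.e. $t$ follows because the graph of a maximal monotone operator is strongly--weakly closed, which lets one pass to the limit in $-\frac{du_\lambda}{dt}=A_\lambda u_\lambda\in A(J_\lambda u_\lambda)$, noting that $J_\lambda u_\lambda\to u$ as well.

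To reach a general datum $u_0\in\overline{D(\mathcal F)}=\overline{D(A)}$, I would approximate $u_0$ by $u_{0,n}\in D(A)$ and use the contraction estimate to see that the corresponding solutions converge uniformly on $[0,T]$; the limit defines $S(t)u_0$ and extends the semigroup continuously. The remaining quantitative statements are the \emph{regularising effect}. Here one uses convexity beyond bare monotonicity: along the flow the energy identity $\frac{d}{dt}\mathcal F(u(t)) = -\big\|\frac{du}{dt}(t)\big\|_H^2$ holds, and—crucially for gradient flows of convex functionals—the map $t\mapsto \big\|\frac{du}{dt}(t)\big\|_H$ is nonincreasing. Consequently
\begin{equation*}
t\,\Big\|\tfrac{du}{dt}(t)\Big\|_H^2 \;\le\; \int_0^t \Big\|\tfrac{du}{ds}(s)\Big\|_H^2\,ds \;=\; \mathcal F(u_0)-\mathcal F(u(t)),
\end{equation*}
which for $u_0\in D(\mathcal F)$ is bounded and integrates to $\frac{du}{dt}\in L^2(0,T;H)$, i.e. $u\in W^{1,2}(0,T;H)$.

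I expect the main obstacle to be precisely the regularising effect $\sqrt t\,\frac{du}{dt}\in L^2(0,T;H)$ for data in $\overline{D(\mathcal F)}\setminus D(\mathcal F)$, where $\mathcal F(u_0)=+\infty$ and the naive energy bound is useless. The remedy is to exploit the convexity of $t\mapsto\mathcal F(u(t))$ and comparison with a fixed $v\in D(\mathcal F)$, yielding an estimate of the form $\mathcal F(u(t))\le \mathcal F(v)+\frac{1}{2t}\|u_0-v\|_H^2$ that is stable under the limit $n\to\infty$ and feeds back into the displayed inequality to give $\int_0^T t\,\|\frac{du}{dt}\|_H^2\,dt<\infty$. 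Finally, regarding the stated $T$-contraction involving the positive and negative parts: this is strictly stronger than the plain contraction obtained above and does \emph{not} follow from maximal monotonicity alone. It requires the order (lattice) structure of $H=L^2(\mathbb X,\nu)$ together with the $T$-accretivity of $A$; in the setting of this paper it is a consequence of the complete accretivity of the operator, which is established separately.
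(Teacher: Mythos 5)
The paper does not prove Theorem \ref{BKTheorem}: it is stated as the classical Brezis--Komura theorem with a pointer to the monograph \cite{Brezis}, so there is no in-paper argument to compare against. Your sketch is, in outline, exactly the proof in that reference: uniqueness and the plain contraction from monotonicity of $\partial\mathcal F$; existence for $u_0\in D(\partial\mathcal F)$ via the Yosida approximation $A_\lambda$, uniform bounds on $u_\lambda'$, and the Cauchy property of $(u_\lambda)$ in $C([0,T];H)$ combined with the strong--weak closedness of the graph of a maximal monotone operator; extension to $u_0\in\overline{D(\mathcal F)}$ by density plus contraction; and the regularising statements from the energy identity, the monotonicity of $t\mapsto\Vert u'(t)\Vert_H$, and the comparison bound $\mathcal F(u(t))\le\mathcal F(v)+\frac{1}{2t}\Vert u_0-v\Vert_H^2$. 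All of this is sound. Your closing remark also deserves emphasis: the $T$-contraction $\Vert(S(t)u_0-S(t)v_0)^{\pm}\Vert\le\Vert(u_0-v_0)^{\pm}\Vert$ is not a consequence of $\mathcal F$ being proper, convex and lower semi-continuous --- the positive and negative parts are undefined on an abstract Hilbert space, and even on $H=L^2(\mathbb X,\nu)$ this property requires $\partial\mathcal F$ to be $T$-accretive. As literally stated, the last assertion of Theorem \ref{BKTheorem} is therefore an overstatement of the classical result; in the body of the paper the missing hypothesis is supplied for the functionals actually used, namely by the complete accretivity of $\mathcal A_p$ and $\mathcal A_1$ established in Theorems \ref{thm:plaplaceflow} and \ref{thm:totalvariationflow}, which is exactly where you locate it.
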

 }

\subsection{Sobolev and BV functions in metric measure spaces}\label{subsec:sobolevbv}
	
Let $(\mathbb{X}, d, \nu)$ be a metric measure space. Given $p \in [1,\infty)$, there are a number of possible definitions of Sobolev spaces on $\mathbb{X}$, most prominently via $p$-upper gradients, $p$-relaxed slopes, and via test plans. On complete and separable metric spaces equipped with a  nonnegative Borel measure finite on bounded sets, all these definitions agree (see \cite{AGS1,DiMarinoTh}); since in this paper we will work under these assumptions, we will choose the most suitable definition for our purposes: the Newtonian spaces. We follow the presentation in \cite{BjBj}.

\begin{definition}{\rm We say that a measure $\nu$ on a metric space $\mathbb{X}$ is {\it doubling}, if there exists a constant $C_d \geq 1$ such that following condition holds:
\begin{equation}
0 < \nu(B(x,2r)) \leq C_d \, \nu(B(x, r)) < \infty
\end{equation}
for all $x \in \mathbb{X}$ and $r > 0$. The constant $C_d$ is called the doubling constant of $\mathbb{X}$. }
\end{definition}

\begin{definition}{\rm  We say that $\mathbb{X}$ supports a {\it weak $(1,p)$-Poincar\'{e} inequality} if there exist constants $C_P > 0$ and $\lambda \geq 1$ such that for all balls $B \subset \mathbb{X}$, all measurable functions $f$ on $\mathbb{X}$ and all upper gradients $g$ of $f$,
$$\dashint_B \vert f - f_B \vert d \nu \leq C_P r \left( \dashint_{\lambda B} g^p d\nu \right)^{\frac1p},  $$
where $r$ is the radius of $B$ and
$$f_B:= \dashint_B f d\nu := \frac{1}{\nu(B)} \int_B f d\nu.$$
The space $\mathbb{X}$ supports a {\it $(1,p)$-Poincar\'{e} inequality} if it supports a weak $(1,p)$-Poincar\'{e} inequality with $\lambda = 1$.}
\end{definition}

By H\"older's inequality, it is easy to see that if	$\mathbb{X}$ supports a  weak $(1,p)$-Poincar\'{e} inequality, then it supports a weak $(1,q)$-Poincar\'{e} inequality for every $q \geq p$.

We say that a Borel function $g$ is an {\it upper gradient} of a Borel function $u: \mathbb{X} \rightarrow \R$ if for all curves $\gamma: [0,l_\gamma] \rightarrow \mathbb{X}$ we have
$$ \left\vert u(\gamma(l_\gamma)) - u(\gamma(0)) \right\vert \leq \int_\gamma g:= \int_0^{l_\gamma} g(\gamma(t)) \vert \dot{\gamma}(t) \vert dt \,  ds,$$
where $$\vert \dot{\gamma}(t) \vert:= \lim_{\tau \to 0} \frac{\gamma(t + \tau) - \gamma(t)}{\tau}$$
is the {\it metric speed} of $\gamma$.
If this inequality holds for $p$-almost every curve, i.e. the $p$-modulus (see for instance \cite[Definition 1.33]{BjBj}) of the family of all curves for which it fails equals zero, then we say that $g$ is a {\it $p$-weak upper gradient} of $u$.

The Sobolev-Dirichlet class $D^{1,p}(\mathbb{X})$ consists of all Borel functions $u: \mathbb{X} \rightarrow \R$ for which there exists  an upper gradient (equivalently: a $p$-weak upper gradient) which lies in $L^p(\mathbb{X},\nu)$. The Sobolev space $W^{1,p}(\mathbb{X}, d, \nu)$ is defined as
$$W^{1,p}(\mathbb{X}, d, \nu):= D^{1,p}(\mathbb{X}) \cap L^p(\mathbb{X}, \nu).$$
In the literature, this space is sometimes called the Newton-Sobolev space (or Newtonian space) and is denoted $N^{1,p}(\mathbb{X})$. The space $W^{1,p}(\mathbb{X},d,\nu)$ is endowed with the norm
\begin{equation*}
\| u \|_{W^{1,p}(\mathbb{X},d,\nu)} = \bigg( \int_{\mathbb{X}} |u|^p \, d\nu + \inf_g \int_{\mathbb{X}} g^p \, d\nu \bigg)^{1/p},
\end{equation*}
where the infimum is taken over all upper gradients of $u$. Equivalently, we may take the minimum over the set of all $p$-weak upper gradients, see \cite[Lemma 1.46]{BjBj}. Under the assumptions that $\nu$ is doubling and a weak $(1,p)$-Poincar\'e inequality is satisfied, Lipschitz functions are dense in $W^{1,p}(\mathbb{X},d,\nu)$ (see \cite[Theorem 5.1]{BjBj}). Let us also stress that the same definition may be applied to open subsets $\Omega \subset \mathbb{X}$.

For every $u \in W^{1,p}(\mathbb{X},d,\nu)$ (even $u \in D^{1,p}(\mathbb{X})$), there exists a minimal $p$-weak upper gradient $|Du| \in L^p(\mathbb{X},\nu)$, i.e. we have
\begin{equation*}
|Du| \leq g \quad \nu-\mbox{a.e.}
\end{equation*}
for all $p$-weak upper gradients $g \in L^p(\mathbb{X},\nu)$ (see \cite[Theorem 2.5]{BjBj}). It is unique up to a set of measure zero. In particular, we may simply plug in $|Du|$ in the infimum in the definition of the norm in $W^{1,p}(\mathbb{X},d,\nu)$. Moreover, in \cite{AGS1} (see also \cite{DiMarinoTh}) it was proved that on complete and separable metric spaces  the various definitions of Sobolev spaces are equivalent, but also that various definitions of $|Du|$ are equivalent, including the Cheeger gradient or the minimal $p$-relaxed slope of $u$. { This identification holds up to sets of $\nu$-measure zero, since elements of the Newton-Sobolev space are defined everywhere and in the other definitions the Sobolev functions are defined $\nu$-a.e.}

Recall that for a function $u : \mathbb{X} \rightarrow \R$, its {\it slope} (also called local Lipschitz constant) is defined by
$$\vert \nabla u \vert(x) := \limsup_{y \to x} \frac{\vert u(y) - u(x)\vert}{d(x,y)},$$
with the convention that  $\vert \nabla u \vert(x) = 0$ if $x$ is an isolated point.
	
\begin{remark}\label{curvature} {\rm  Obviously, for locally Lipschitz functions, $\vert Du \vert \leq \vert \nabla u \vert$. In general the equality is not true, but there are two important cases in which we have $\vert Du \vert = \vert \nabla u \vert$ $\nu$-a.e. These are:
			
(1) When $\nu$ is  doubling and $(\mathbb{X},d, \nu)$ supports a weak $(1,p)$-Poincar\'e inequality for some $p > 1$ (see \cite{Chegeer});
			
(2) When $(\mathbb{X},d, \nu)$ is a metric measure spaces with Riemannian Ricci curvature bounded from below { (see \cite{AGS2,AGS3})}. $\blacksquare$}
\end{remark}

{ It is possible that the $p$-weak upper gradient depends on $p$, even if the function is Lipschitz. See for instance \cite{dMS}, where the authors show that the Poincar\'e inequality is satisfied only for some range of $p$, so the $p$-weak upper gradient agrees with the slope for sufficiently large $p$ and is identically zero for small $p$.}

As in the case of Sobolev functions, in the literature there are several different ways to characterise the total variation in metric measure spaces. However, on complete and separable metric spaces equipped with a doubling measure (or even under a bit weaker assumptions), these notions turn out to be equivalent, see \cite{ADiM} and \cite{DiMarinoTh}. In this paper, we will employ the definition of total variation introduced by Miranda in \cite{Miranda1}.  For $u \in L^1(\mathbb{X},\nu)$, we define the total variation of $u$ on an open set $\Omega \subset \mathbb{X}$ by the formula
\begin{equation}\label{dfn:totalvariationonmetricspaces}
\vert D u \vert_{\nu}(\Omega):= \inf \left\{ \liminf_{n \to \infty} \int_\Omega g_{u_n} \, d\nu \ : \ u_n \in Lip_{loc}( \Omega), \ u_n \to u \ \hbox{in} \ L^1(\Omega, \nu) \right\},
\end{equation}
where $g_{u_n}$ is a $1$-weak upper gradient of $u$ (we may take $g_{u_n} = |\nabla u_n|$, see \cite{ADiM}). Under the assumptions that $\nu$ is doubling on $\Omega$ and $(\Omega,d,\nu)$ satisfies a weak $(1,1)$-Poincar\'e inequality, since by \cite[Theorem 5.1]{BjBj} Lipschitz functions are dense in $W^{1,1}(\Omega,d,\nu)$, in the definition above we may require that $u_n$ are Lipschitz functions instead of locally Lipschitz functions. Moreover, the total variation $|Du|_\nu(\mathbb{X})$ defined by formula \eqref{dfn:totalvariationonmetricspaces} is lower semicontinuous with respect to convergence in $L^1(\mathbb{X},\nu)$.

The space of functions of bounded variation $BV(\mathbb{X},d,\nu)$ consists of all functions $u \in L^1(\mathbb{X},\nu)$ such that $|Du|_\nu(\mathbb{X}) < \infty$. It is a Banach space with respect to the norm
$$\Vert u \Vert_{BV(\mathbb{X},d,\nu)}:= \Vert u \Vert_{L^1(\mathbb{X},\nu)} + \vert D u \vert_{\nu}(\mathbb{X}).$$
Convergence in norm is often too much to ask when we deal with $BV$ functions, therefore we will employ the notion of strict convergence. We say that a sequence $\{ u_i \} \subset BV(\mathbb{X},d,\nu)$ {\it strictly converges} to $u \in BV(\mathbb{X},d,\nu)$, if $u_i \to  u$ in $L^1(\mathbb{X}, \nu)$ and $| Du_i |_\nu(\mathbb{X}) \to | Du |_\nu(\mathbb{X})$.

\subsection{The differential structure}\label{subsec:giglistructure}

The main tool we will employ in order to provide a characterisation of the subdifferential of the Cheeger energy $\mathsf{Ch}_p$ is the first-order differential structure on metric measure spaces introduced by Gigli. We follow the presentation made by its author in \cite{Gig} (for $p = 2$) and by Buffa-Comi-Miranda in \cite{BCM} (for arbitrary $p \in [1,\infty]$).  From now on, we assume that $\mathbb{X}$ is a complete and separable metric space and $\nu$ is a nonnegative Radon measure.

\begin{definition}{\rm
We define the cotangent module to $\mathbb{X}$ as
$$ \mbox{PCM}_p = \left\{ \{(f_i, A_i)\}_{i \in \N} \ : \ (A_i)_{i \in\N} \subset \mathcal{B}(\mathbb{X}), \  f_i \in D^{1,p}(A_i), \ \ \sum_{i \in \N} \int_{A_i} |Df_i|^p \, d\nu < \infty  \right\},$$
where $A_i$ is a partition of $\mathbb{X}$. We define the equivalence relation $\sim$ as
$$\{(A_i, f_i)\}_{i \in \N} \sim \{(B_j,g_j)\}_{j \in \N} \quad \mbox{if} \quad |D(f_i - g_j)| = 0 \ \ \nu-\hbox{a.e. on} \ A_i \cap B_j.$$
Consider the map $ \vert \cdot \vert_*: \mbox{PCM}_p/\sim \rightarrow L^p(\mathbb{X}, \nu)$ given by
$$\vert \{(f_i, A_i)\}_{i \in \N} \vert_* := \vert D f_i \vert$$
$\nu$-everywhere on $A_i$ for all $i \in \N$, namely the {\it pointwise norm} on $ \mbox{PCM}_p/\sim$.

In $ \mbox{PCM}_p/\sim$ we define the norm $\| \cdot \|$ as
$$ \|  \{(f_i, A_i)\}_{i \in \N} \|^p = \sum_{i\in \N} \int_{A_i}|Df_i|^p $$
and set $L^p(T^* \mathbb{X})$ to be the closure of $ \mbox{PCM}_p/\sim$ with respect to this norm, i.e. we identify functions which differ by a constant and we identify possible rearranging of the sets $A_i$. $L^p(T^* \mathbb{X})$ is called the {\it cotangent module} and its elements will be called {\it $p$-cotangent vector field}.

$L^p(T^* \mathbb{X})$ is a $L^p(\nu)$-normed module, we denote by  $L^q(T\mathbb{X})$ the dual module of $L^p(T^* \mathbb{X})$, namely $L^q(T\mathbb{X}):= \hbox{HOM}(L^p(T^* \mathbb{X}), L^1(\mathbb{X}, \nu))$, which is a $L^q(\nu)$-normed module. The elements of $L^q(T\mathbb{X})$ will be called  {\it $q$-vector fields} on $\mathbb{X}$. The duality between $\omega \in L^p(T^* \mathbb{X})$ and $L \in  L^q(T\mathbb{X})$ will be denoted by $\omega(X) \in L^1(\mathbb{X}, \nu)$. Since the module $L^p(T^* \mathbb{X})$ is reflexive we can identify
$$ L^q(T\mathbb{X})^* = L^p(T^*\mathbb{X}),$$
where $\frac{1}{p} + \frac{1}{q} = 1$.}

\end{definition}
	
\begin{definition}\label{dfn:differential}
{\rm
Given $f \in D^{1,p}(\mathbb{X})$ we can define its {\it differential} $df$ as an element of $L^p(T^* \mathbb{X})$ given by the formula $df = (f, \mathbb{X})$.}
\end{definition}

Clearly, the operation of taking the differential is linear as an operator from $D^{1,p}(\mathbb{X})$ to $L^p(T^{*} \mathbb{X})$; moreover, from the definition of the norm in $L^p(T^{*} \mathbb{X})$ it is clear that this operator is bounded with norm equal to one. Moreover, again from the definition of the pointwise norm, it is clear that
\begin{equation*}
|df|_* = |Df| \qquad \nu\mbox{-a.e. on } \mathbb{X} \mbox{ for all } f \in W^{1,p}(\mathbb{X},d,\nu).
\end{equation*}

\begin{definition}\label{dfn:gradient}
{\rm
Given $f \in W^{1,p}(\mathbb{X},d,\nu)$, we say that $X \in L^q(T\mathbb{X})$ is its {\it $p$-gradient} provided that
\begin{equation*}
df(X) = |X|^q = |df|_*^p \qquad \nu-\mbox{a.e. on } \mathbb{X}.
\end{equation*}
The set of all $p$-gradients of $f$ will be denoted by $\mbox{Grad}_p(f)$.}
\end{definition}

Notice that for any $X \in L^q(T\mathbb{X})$ and $f \in W^{1,p}(\mathbb{X},d,\nu)$ we have
\begin{equation}
df(X) \leq |df|_* |X| \leq \frac1p |df|_*^p + \frac1q |X|^q.
\end{equation}
 Hence,
\begin{equation*}
X \in \mbox{Grad}_p(f) \Leftrightarrow \int_{\mathbb{X}} df(X) \, d\nu \geq \frac1p \int_{\mathbb{X}} |df|_*^p \, d\nu + \frac1q \int_{\mathbb{X}} |X|^q \, d\nu.
\end{equation*}
The name $p$-gradient is a bit misleading: in Euclidean spaces, a $p$-gradient of $u \in W^{1,p}(\mathbb{X},d,\nu)$ in coordinates is $X = |\nabla u|^{p-2} \nabla u$. Nonetheless, we will use this name in order to keep the notation consistent with \cite{Gig}, where it was introduced for $p = 2$.

Now, we define the divergence of a vector field, in the case when it can be represented by an $L^q$ function.  Following \cite{BCM,DiMarinoTh}, we set
$$ \mathcal{D}^q(\mathbb{X}) = \left\{ X \in L^q(T\mathbb{X}): \,\, \exists f \in L^q(\mathbb{X},\nu) \,  \,\, \int_\mathbb{X} fg d\nu = - \int_{\mathbb{X}} dg(X) d\nu \ \ \forall g \in  W^{1,p}(\mathbb{X},d,\nu) \right\}. $$
Here, the right hand side makes sense as an action of an element of $L^p(T^* \mathbb{X})$ on an element of $L^{q}(T\mathbb{X})$; the resulting function is an element of $L^1(\mathbb{X},\nu)$. The function $f$, which is unique by the density of  $W^{1,p}(\mathbb{X},d,\nu)$ in $ L^{p}(\mathbb{X},\nu)$, will be called the {\it $q$-divergence} of the vector field $X$, and we shall write $\mbox{div}(X)= f$.  An exhaustive discussion on the uniqueness of the divergence and its dependence on $q$ can be found in \cite{BCM}.

In the course of the paper, in order to study the subdifferential of the Cheeger energy and the total variation, we will need to consider the case when $X \in L^q(T\mathbb{X})$ for $q \in (1,\infty]$, but its divergence is in $L^2(\mathbb{X},\nu)$. To this end, we introduce the following definition (compare to the definition of derivations with integrable divergence in \cite{BCM,DiMarinoTh}). For $\frac1r + \frac1s = 1$, we set
$$ \mathcal{D}^{q,r}(\mathbb{X}) = \bigg\{ X \in L^q(T\mathbb{X}): \,\, \exists f \in L^r(\mathbb{X},\nu) \quad \forall g \in  W^{1,p}(\mathbb{X},d,\nu) \cap L^s(\mathbb{X},\nu) \qquad\qquad\qquad $$
$$ \qquad\qquad\qquad\qquad\qquad\qquad\qquad\qquad\qquad\qquad \int_\mathbb{X} fg \, d\nu = - \int_{\mathbb{X}} dg(X) \, d\nu \bigg\}. $$
The function $f$, which is unique by the density of $W^{1,p}(\mathbb{X},d,\nu)$ in $ L^{p}(\mathbb{X},\nu)$, will be called the $(q,r)$-divergence of $X$. We will still write $\mbox{div}(X) = f$ when it is clear from the context. We will be most interested in the case $r = s = 2$. Also, note that $\mathcal{D}^{q,q}(\mathbb{X}) = \mathcal{D}^{q}(\mathbb{X})$.  Furthermore, whenever Lipschitz functions are dense in $W^{1,p}(\mathbb{X},d,\nu)$, see for instance \cite[Theorem 5.1]{BB}, then the divergence does not depend on $r$ in the following sense: if $f$ is the $(q,r)$-divergence of $X$ and $f \in L^{r'}(\mathbb{X},\nu)$, then it is also the $(q,r')$-divergence of $X$.

It is easy to see that given $X \in \mathcal{D}^q(\mathbb{X})$ and $f \in L^\infty(\mathbb{X},\nu) \cap D^{1,p}(\mathbb{X})$ with $\vert D f \vert \in L^\infty(\mathbb{X},\nu)$, we have
$$fX \in \mathcal{D}^q(\mathbb{X}) \quad \hbox{and} \quad \mbox{div}(fX) = df(X) + f \mbox{div}(X).$$
 Furthermore, whenever $X \in \mathcal{D}^{q,r}(\mathbb{X})$ and $f \in \mbox{Lip}(\mathbb{X})$ has compact support, we have
$$fX \in \mathcal{D}^{q,r}(\mathbb{X}) \quad \hbox{and} \quad \mbox{div}(fX) = df(X) + f \mbox{div}(X).$$

Finally, let us note that when the metric measure space is Euclidean equipped with the Lebesgue measure, the vector fields and differentials arising from this construction coincide with their standard counterparts defined in coordinates, see \cite[Remark 2.2.4]{Gig}. Therefore, in this case all the definitions and results obtained in this paper can be read in coordinates and are compatible with their Euclidean counterparts.

\section{The $p$-Laplacian evolution equation}\label{sec:plaplace}

In this Section, we apply the framework presented in the previous Section to study the $p$-Laplacian evolution equation, i.e. the gradient flow of the Cheeger energy $\mathsf{Ch}_p$ with $1 < p < \infty$. Because we need to work in a Hilbert space, we will study the problem in the space $W^{1,p}(\mathbb{X},d,\nu) \cap L^2(\mathbb{X},\nu)$.  We again assume that $(X,d)$ is complete and separable and that $\nu$ is a nonnegative measure which is finite on bounded sets.

The Cheeger energy (restricted to $L^2(\mathbb{X},\nu)$) $\mathsf{Ch}_p: L^2(\mathbb{X},\nu) \rightarrow [0,+\infty]$ is defined by the formula
\begin{equation}
\mathsf{Ch}_p(u) = \twopartdef{\displaystyle\frac{1}{p} \displaystyle\int_{\mathbb{X}} |Du|^p \, d\nu}{u \in W^{1,p}(\mathbb{X},d,\nu) \cap L^2(\mathbb{X},\nu) }{+ \infty}{u \in L^2(\mathbb{X},\nu) \setminus W^{1,p}(\mathbb{X},d,\nu).}
\end{equation}
As in the previous Section, we will provide a notion of solutions to the gradient flow associated to $\mathsf{Ch}_p$ and prove existence and uniqueness of solutions. In other words, we study the abstract Cauchy problem
\begin{equation}\label{eq:cauchylp}
\left\{ \begin{array}{ll} u'(t) + \partial \mathsf{Ch}_p(u(t)) \ni 0, \quad t \in [0,T] \\[5pt] u(0) = u_0. \end{array}\right.
\end{equation}
Again, our approach is to express the subdifferential of $\mathsf{Ch}_p$ in terms of the differential structure by Gigli. We define the following operator.

\begin{definition}{\rm $(u,v) \in \mathcal{A}_p$ if and only if $u, v \in L^2(\mathbb{X}, \nu)$, $u \in W^{1,p}(\mathbb{X}, d, \nu)$ and there exists a vector field $X \in \mathcal{D}^{q,2}(\mathbb{X})$ with  $| X |^q \leq | du |_*^p$ $\nu$-a.e. such that the following conditions hold:
\begin{equation}
-\mbox{div}(X) = v \quad \hbox{in } \mathbb{X};
\end{equation}
\begin{equation}
du(X) = |du|_*^p  \quad \nu\hbox{-a.e. in } \mathbb{X}.
\end{equation}
}
\end{definition}

\begin{theorem}\label{thm:plaplaceflow}
$\partial \mathsf{Ch}_p = \mathcal{A}_p$. Furthermore, the operator $\mathcal{A}_p$ is completely accretive and the domain of $\mathcal{A}_p$ is dense in $L^2(\mathbb{X}, \nu)$.
\end{theorem}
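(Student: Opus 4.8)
The plan is to prove the three assertions in turn, putting almost all of the effort into the operator identity $\partial\mathsf{Ch}_p=\mathcal{A}_p$, which I would establish by two inclusions. For the easy inclusion $\mathcal{A}_p\subseteq\partial\mathsf{Ch}_p$ I would argue directly. Given $(u,v)\in\mathcal{A}_p$ with associated field $X$, note first that combining $du(X)=|du|_*^p$ with the pointwise Young inequality $du(X)\le\frac1p|du|_*^p+\frac1q|X|^q$ and the bound $|X|^q\le|du|_*^p$ forces $|X|^q=|du|_*^p$ $\nu$-a.e., so $X\in\mbox{Grad}_p(u)$. For arbitrary $w$ with $u+w\in W^{1,p}(\mathbb{X},d,\nu)\cap L^2(\mathbb{X},\nu)$ (otherwise $\mathsf{Ch}_p(u+w)=+\infty$ and the inequality is trivial), applying the pointwise Young inequality to $d(u+w)(X)$ and using $\int_{\mathbb{X}}du(X)\,d\nu=\int_{\mathbb{X}}|du|_*^p\,d\nu=\int_{\mathbb{X}}|X|^q\,d\nu$ gives $\frac1p\int_{\mathbb{X}}|d(u+w)|_*^p\,d\nu\ge\frac1p\int_{\mathbb{X}}|du|_*^p\,d\nu+\int_{\mathbb{X}}dw(X)\,d\nu$. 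Since $X\in\mathcal{D}^{q,2}(\mathbb{X})$ with $-\mbox{div}(X)=v$, the integration-by-parts identity $\int_{\mathbb{X}}dw(X)\,d\nu=-\int_{\mathbb{X}}\mbox{div}(X)\,w\,d\nu=\langle v,w\rangle$ then yields the subdifferential inequality $\mathsf{Ch}_p(u+w)-\mathsf{Ch}_p(u)\ge\langle v,w\rangle$.

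The reverse inclusion is the main obstacle and is where the Fenchel--Rockafellar theorem enters. Given $(u,v)\in\partial\mathsf{Ch}_p$ (so in particular $u\in W^{1,p}(\mathbb{X},d,\nu)\cap L^2(\mathbb{X},\nu)$), the inclusion is equivalent to $u$ minimising $z\mapsto\mathsf{Ch}_p(z)-\langle v,z\rangle$ over $L^2(\mathbb{X},\nu)$. I would cast this as the primal problem $\inf_{z\in U}\{\Psi(dz)+G(z)\}$ with $U=W^{1,p}(\mathbb{X},d,\nu)\cap L^2(\mathbb{X},\nu)$, $\Psi(\omega)=\frac1p\int_{\mathbb{X}}|\omega|_*^p\,d\nu$ on $L^p(T^*\mathbb{X})$, $G(z)=-\langle v,z\rangle$, and $L=d$ the bounded norm-one differential. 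Computing the conjugates gives $\Psi^*(X)=\frac1q\int_{\mathbb{X}}|X|^q\,d\nu$ and, identifying $L^*X$ with $-\mbox{div}(X)$ through the definition of the $(q,2)$-divergence, $G^*(-L^*X)$ becomes the indicator of the constraint $-\mbox{div}(X)=v$; the dual problem is thus $\sup\{-\frac1q\int_{\mathbb{X}}|X|^q\,d\nu:\ X\in\mathcal{D}^{q,2}(\mathbb{X}),\ -\mbox{div}(X)=v\}$. As $\Psi$ is finite and continuous on all of $L^p(T^*\mathbb{X})$ (in particular at $d0=0$), the qualification hypothesis holds, giving no duality gap and attainment of the dual supremum at some $X$. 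The delicate points are the choice of the Banach space $U$ so that the adjoint $L^*$ reproduces exactly the $(q,2)$-divergence (matching the test-function class $W^{1,p}\cap L^2$ in the definition of $\mathcal{D}^{q,2}(\mathbb{X})$), and checking that finiteness of the dual functional forces $X\in\mathcal{D}^{q,2}(\mathbb{X})$ with $L^2$ divergence.

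From the absence of a duality gap I obtain $\frac1p\int_{\mathbb{X}}|du|_*^p\,d\nu-\langle v,u\rangle+\frac1q\int_{\mathbb{X}}|X|^q\,d\nu=0$; rewriting $\langle v,u\rangle=-\int_{\mathbb{X}}\mbox{div}(X)\,u\,d\nu=\int_{\mathbb{X}}du(X)\,d\nu$ gives $\int_{\mathbb{X}}du(X)\,d\nu=\frac1p\int_{\mathbb{X}}|du|_*^p\,d\nu+\frac1q\int_{\mathbb{X}}|X|^q\,d\nu$. Comparing this with the pointwise Young inequality (valid with $\le$ everywhere) forces equality $\nu$-a.e., whence $du(X)=|du|_*|X|=\frac1p|du|_*^p+\frac1q|X|^q$; the equality case of Young then yields $|X|^q=|du|_*^p$ and $du(X)=|du|_*^p$ $\nu$-a.e., so $(u,v)\in\mathcal{A}_p$. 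This completes the identity $\partial\mathsf{Ch}_p=\mathcal{A}_p$.

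For complete accretivity I would take $(u_1,v_1),(u_2,v_2)\in\mathcal{A}_p$ with fields $X_1,X_2$ and a test function $T\in P_0$ (smooth, non-decreasing, $0\le T'\le1$, with $T'$ compactly supported and $0\notin\mbox{supp}\,T$), so that $w:=u_1-u_2\in W^{1,p}\cap L^2$ and $T(w)\in W^{1,p}\cap L^2$ with $d(T(w))=T'(w)\,dw$ by the chain rule. Using $-\mbox{div}(X_i)=v_i$ and integrating by parts, $\int_{\mathbb{X}}T(w)(v_1-v_2)\,d\nu=\int_{\mathbb{X}}T'(w)\,dw(X_1-X_2)\,d\nu$. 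The key pointwise estimate is that, expanding $d(u_1-u_2)(X_1-X_2)$ and using $du_i(X_i)=|du_i|_*^p$, $du_i(X_j)\le|du_i|_*|X_j|$ and $|X_i|=|du_i|_*^{p-1}$, one gets $dw(X_1-X_2)\ge(|du_1|_*^{p-1}-|du_2|_*^{p-1})(|du_1|_*-|du_2|_*)\ge0$ by monotonicity of $t\mapsto t^{p-1}$. Since $T'\ge0$, this yields $\int_{\mathbb{X}}T(w)(v_1-v_2)\,d\nu\ge0$, which is complete accretivity. Finally, density of $D(\mathcal{A}_p)$ follows from the identity $\mathcal{A}_p=\partial\mathsf{Ch}_p$ together with the general fact that $\overline{D(\partial\mathsf{Ch}_p)}=\overline{D(\mathsf{Ch}_p)}$ for the proper, convex, lower semicontinuous functional $\mathsf{Ch}_p$; since $D(\mathsf{Ch}_p)=W^{1,p}(\mathbb{X},d,\nu)\cap L^2(\mathbb{X},\nu)$ contains the compactly supported Lipschitz functions, which are dense in $L^2(\mathbb{X},\nu)$ under the standing assumptions, the domain is dense.
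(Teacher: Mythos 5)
Your proposal is correct, and the only place where it genuinely diverges from the paper is in how the hard inclusion $\partial\mathsf{Ch}_p\subseteq\mathcal{A}_p$ is obtained. The paper never attacks this inclusion elementwise: it proves $\mathcal{A}_p\subseteq\partial\mathsf{Ch}_p$ (as you do), then verifies the range condition $R(I+\mathcal{A}_p)=L^2(\mathbb{X},\nu)$ by applying Fenchel--Rockafellar to the resolvent problem $\min_z\{\mathsf{Ch}_p(z)+\tfrac12\|z\|^2-\langle g,z\rangle\}$, and concludes by Minty's theorem that the monotone operator $\mathcal{A}_p$ is maximal, hence equal to the maximal monotone operator $\partial\mathsf{Ch}_p$. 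You instead dualize, for each given $(u,v)\in\partial\mathsf{Ch}_p$, the linearly perturbed problem $\min_z\{\mathsf{Ch}_p(z)-\langle v,z\rangle\}$, whose minimizer is $u$ itself, and read off the vector field $X$ from the dual solution via the equality case of Young's inequality. Both routes hinge on the same duality theorem and the same identification of the adjoint of the differential with $-\mathrm{div}$ on $\mathcal{D}^{q,2}(\mathbb{X})$ (including the observation that finiteness of $G^*$ forces the divergence to be the prescribed $L^2$ function); yours is more direct in that it characterises $\partial\mathsf{Ch}_p(u)$ pointwise without invoking Minty or solving the resolvent equation, while the paper's version delivers the range condition explicitly, which is in any case the input for the semigroup generation. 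Your complete accretivity estimate, via $dw(X_1-X_2)\ge(|du_1|_*^{p-1}-|du_2|_*^{p-1})(|du_1|_*-|du_2|_*)\ge0$, is a sharper pointwise bound than the paper's repeated use of Young's inequality, and it legitimately uses $|X_i|=|du_i|_*^{p-1}$, which you correctly deduced from the definition of $\mathcal{A}_p$. Two cosmetic remarks: the sign bookkeeping in your dual problem (whether the constraint reads $\mathrm{div}(X)=v$ or $-\mathrm{div}(X)=v$) should be pinned down by fixing the Ekeland--Temam convention, and in the density argument ``compactly supported'' Lipschitz functions should be replaced by ``boundedly supported'' ones, since under the standing assumptions (no properness of the space) the standard cut-offs $\max(0,1-k\,d(\cdot,K))$ need not have compact support; neither point affects the validity of the argument.
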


\begin{proof}
First, let us see that $\mathcal{A}_p \subset \partial \mathsf{Ch}_p$. Let $(u,v) \in \mathcal{A}_p$. Then, given $w \in W^{1,p}(\mathbb{X}, d, \nu)$, we have
$$ \int_{\mathbb{X}} v(w-u) \, d \nu = - \int_{\mathbb{X}}  \mbox{div}(X)(w -u) \, d\nu = \int_{\mathbb{X}} d(w-u)(X) \, d\nu $$
$$ = \int_{\mathbb{X}} dw(X) \, d\nu - \int_{\mathbb{X}} du(X) \, d\nu \leq \frac{1}{p} \int_{\mathbb{X}} |dw|_*^p \, d\nu + \frac{1}{q} \int_{\mathbb{X}} |X|^q \, d\nu - \int_{\mathbb{X}} |du|_*^p \, d\nu $$
$$ \leq \frac{1}{p} \int_{\mathbb{X}} |dw|_*^p  \, d\nu - \frac{1}{p} \int_{\mathbb{X}} |du|_*^p \, d\nu = \mathsf{Ch}_p(w) - \mathsf{Ch}_p(u).$$
If $w \notin W^{1,p}(\mathbb{X},d,\nu)$, then $\mathsf{Ch}_p(w) = +\infty$ and this inequality also holds. Consequently, $(u,v) \in \partial \mathsf{Ch}_p$. { Notice that since $\mathcal{A}_p \subset \partial \mathsf{Ch}_p$, the operator $\mathcal{A}_p$ is monotone.}

Since $\mathsf{Ch}_p$ is convex and lower semicontinuous (see \cite{AGS1}), the operator $\partial \mathsf{Ch}_p$ is maximal monotone. So, if we show that  $\mathcal{A}_p$ satisfies the range condition, by Minty Theorem we would also have that the operator $\mathcal{A}_p$ is maximal monotone, and consequently $\partial \mathsf{Ch}_p = \mathcal{A}_p$. In order to finish the proof, let us see that $\mathcal{A}_p$ satisfies the range condition, i.e.
\begin{equation}
\hbox{Given} \ g \in L^2(\mathbb{X}, \nu), \ \exists \, u \in D(\mathcal{A}_p) \ s.t. \ \  g \in u + \mathcal{A}_p(u).
\end{equation}
We rewrite it as
$$ g \in u + \mathcal{A}_p(u) \iff (u, g-u) \in \mathcal{A}_p,$$
so we need to show that there exists a vector field  $X \in \mathcal{D}^{q,2}(\mathbb{X})$ with  $| X |^q \leq | du |_*^p$ $\nu$-a.e. such that the following conditions hold:
\begin{equation}\label{eq:rangecondition1p}
-\mbox{div}(X) = g-u \quad \hbox{in } \mathbb{X};
\end{equation}
\begin{equation}\label{eq:rangecondition2p}
du(X) = |du|_*^p \quad \nu \hbox{-a.e. in } \mathbb{X}.
\end{equation}
Again, we are going to prove this by means of the Fenchel-Rockafellar duality theorem. We set
$$U = W^{1,p}(\mathbb{X},d,\nu) \cap L^2(\mathbb{X},\nu), \qquad V = L^p(T^{*} \mathbb{X}),$$ and the operator $A: U \rightarrow V$ is defined by the formula
$$A(u) = du,$$
where $du$ is the differential of $u$ in the sense of Definition \ref{dfn:differential}. Hence, $A$ is a linear and continuous operator. Moreover, the dual spaces to $U$ and $V$ are
$$ U^* = (W^{1,p}(\mathbb{X},d,\nu) \cap L^2(\mathbb{X},\nu))^*, \qquad V^* =  L^q(T\mathbb{X}).$$
We set $E: L^p(T^{*} \mathbb{X}) \rightarrow \mathbb{R}$ by the formula
\begin{equation}
E(v) = \frac1p \int_{\mathbb{X}} |v|_*^p \, d\nu.
\end{equation}
It is clear that the functional $E^*:  L^q(T \mathbb{X}) \rightarrow [0,\infty]$ is given by the formula
$$E^*(v^*) = \frac1q \int_{\mathbb{X}} |v^*|^q \, d\nu.$$
We also set $G: W^{1,p}(\mathbb{X},d,\nu) \cap L^2(\mathbb{X},\nu) \rightarrow \mathbb{R}$ by
$$G(u):= \frac12 \int_{\mathbb{X}} u^2 \, d\nu - \int_{\mathbb{X}} ug \, d\nu.$$
The functional $G^* : (W^{1,p}(\mathbb{X},d,\nu) \cap L^2(\mathbb{X},\nu))^* \rightarrow [0,+\infty ]$ is given by
$$G^*(u^*) = \displaystyle\frac12 \int_{\mathbb{X}} (u^*  + g)^2 \, d\nu.$$
Now,  for fixed $v^* \in L^q(T\mathbb{X})$ in the domain of $A^*$, and any $u \in W^{1,p}(\mathbb{X},d,\nu) \cap L^2(\mathbb{X},\nu)$, we have
$$\int_{\mathbb{X}} u \, (A^* v^*) \, d\nu = \langle u, A^* v^* \rangle  = \langle v^*, Au \rangle = \int_{\mathbb{X}} du(v^*) \, d\nu,$$
so the definition of the divergence of $v^*$ is satisfied with
\begin{equation}
\mbox{div}(v^*) = - A^* v^*.
\end{equation}
In particular, $\mbox{div}(v^*) \in L^2(\mathbb{X},\nu)$. In other words, the domain of $A^*$ is $\mathcal{D}^{q,2}(\mathbb{X}).$

Consider the energy functional $\mathcal{G}_p: L^2(\mathbb{X}, \nu) \rightarrow (-\infty, + \infty]$ defined by
\begin{equation}
\mathcal{G}_p(u) := \mathsf{Ch}_p(u) + G(u).
\end{equation}
Since $\mathcal{G}_p$ is coercive, convex and lower semi-continuous, the primal minimization problem
$$\min_{u \in L^2(\mathbb{X}, \nu)} \mathcal{G}_p(u)$$
admits an optimal solution $\overline{u} \in W^{1,p}(\mathbb{X},d,\nu)$. Notice that we may write it a
\begin{equation}\label{eq:pnotequal2primal}
\min_{u \in L^2(\mathbb{X}, \nu)} \mathcal{G}_p(u) = \inf_{u \in W^{1,p}(\mathbb{X},d,\nu) \cap L^2(\mathbb{X},\nu)} \bigg\{ E(Au) + G(u) \bigg\}.
\end{equation}
Hence, its dual problem is
\begin{equation}\label{eq:pnotequal2dual}
\sup_{v^* \in L^q(T\mathbb{X})} \bigg\{  - E^*(-v^*) - G^*(A^* v^*) \bigg\}.
\end{equation}
For $u_0 \equiv 0$ we have $E(Au_0) = 0 < \infty$, $G(u_0) = 0 < \infty$ and $E$ is continuous at $0$. By the Fenchel-Rockafellar duality theorem, we have
\begin{equation}
\inf \eqref{eq:pnotequal2primal} = \sup \eqref{eq:pnotequal2dual}
\end{equation}
and
\begin{equation}
\hbox{the dual problem \eqref{eq:pnotequal2dual} admits at least one solution $\overline{v}$.}
\end{equation}
Now, we use the extremality conditions between the solutions $\overline{u}$ of the primal problem and $\overline{v}$ of the dual problem, see \cite[Chapter III]{EkelandTemam}. These are
\begin{equation}
E(A\overline{u}) + E^*(-\overline{v}^*) = \langle -\overline{v}^*, A\overline{u} \rangle
\end{equation}
\begin{equation}\label{eq:extremalitycondition2}
G(\overline{u}) + G^*(A^* \overline{v}^*) = \langle \overline{u}, A^* \overline{v}^* \rangle.
\end{equation}
The first condition immediately gives us
\begin{equation*}
\frac1p \int_\mathbb{X} |d\overline{u}|_*^p \, d\nu + \frac1q \int_\mathbb{X} |-\overline{v}^*|^q \, d\nu = \int_{\mathbb{X}} d\overline{u}(- \overline{v}^*) \, d\nu,
\end{equation*}
so $-\overline{v}^* \in \mathrm{Grad}_p(\overline{u})$.  In particular, $|-\overline{v}^*|^q = |d\overline{u}|_*^p$ $\nu$-a.e.

From the second condition, using the definition of the convex conjugate, we have that for any $w \in L^2(\mathbb{X},\nu)$
\begin{equation*}
G^*(A^* \overline{v}^*) \geq \langle w, A^* \overline{v}^* \rangle - G(w),
\end{equation*}
so by equation \eqref{eq:extremalitycondition2} we have
\begin{equation*}
G(w) - G(\overline{u}) \geq \langle w, A^* \overline{v}^* \rangle - G^*(A^* \overline{v}^*) - G(\overline{u}) \geq \langle w - \overline{u}, A^* \overline{v}^* \rangle,
\end{equation*}
so $A^* \overline{v}^* \in \partial G(\overline{u})$. On the other hand, $\partial G(\overline{u}) = \{ \overline{u} - g \}$, so
\begin{equation}
-\mathrm{div}(\overline{v}^*) = \overline{u} - g.
\end{equation}
Hence, the pair $(\overline{u}, -\overline{v}^*)$ satisfies the desired conditions \eqref{eq:rangecondition1p}-\eqref{eq:rangecondition2p}. Hence, the operator $\mathcal{A}_p$ satisfies the range condition, so it is maximal monotone; hence, $\mathcal{A}_p = \partial \mathsf{Ch}_p$.

Let $P_{0}$ denote the set of all functions $T\in C^{\infty}(\R)$ satisfying $0\le T'\le 1$ such that $T'$  is compactly supported, and $x=0$ is not contained in the support $\textrm{supp}(T)$ of $T$. To prove that $\mathcal{A}_p$ is  a completely accretive operator we must show that  (see \cite{ACMBook,BCr2})
$$ \int_{\mathbb{X}}T(u_1-u_2)(v_1-v_2)\, d\nu \geq 0 $$
for every $T\in P_{0}$ and every $(u_i,v_i) \in \mathcal{A}_p$, $i=1,2$. In fact, given $(u_i, v_i) \in \mathcal{A}_p$, $i =1,2$, we have that  there exist  vector fields  $X_i \in \mathcal{D}^{q,2}(\mathbb{X})$  with  $| X_i |^q \leq | du_i |_*^p$ $\nu$-a.e. such that the following conditions hold:
\begin{equation}\label{e1caplaplace}
-\mbox{div}(X_i) = v_i \quad \hbox{in } \mathbb{X};
\end{equation}
\begin{equation}\label{e2caplaplace}
du_i(X) = |du_i|_*^p  \quad \nu\hbox{-a.e. in } \mathbb{X}.
\end{equation}
Then, since $T(u_1 - u_2) \in W^{1,p}(\mathbb{X}, d, \nu)$, using \eqref{e1caplaplace} and \eqref{e2caplaplace} and applying the chain rule \cite[Corollary 2.2.8]{Gig} (given there for $p = 2$, but the proof works for all $1 < p < \infty$) we get
$$ \int_{\mathbb{X}}T(u_1-u_2)(v_1-v_2)\, d\nu = -  \int_{\mathbb{X}} T(u_1-u_2) (\mbox{div}(X_1) - \mbox{div}(X_2)) \, d\nu $$
$$= \int_{\mathbb{X}} dT(u_1-u_2)(X_1 - X_2) \, d \nu  =  \int_{\mathbb{X}} T'(u_1-u_2) d(u_1-u_2)(X_1 - X_2) \, d \nu  $$
$$= \int_{\mathbb{X}} T'(u_1-u_2) du_1(X_1) \, d\nu -  \int_{\mathbb{X}} T'(u_1-u_2) du_1(X_2) \, d\nu $$
$$- \int_{\mathbb{X}} T'(u_1-u_2) du_2(X_1) \, d\nu +\int_{\mathbb{X}} T'(u_1-u_2)du_2(X_2) \, d\nu $$
$$= \int_{\mathbb{X}} T'(u_1-u_2)  |du_1|_*^p \, d\nu -  \int_{\mathbb{X}} T'(u_1-u_2) du_1(X_2) \, d\nu $$
$$- \int_{\mathbb{X}} T'(u_1-u_2) du_2(X_1) \, d\nu + \int_{\mathbb{X}} T'(u_1-u_2) |du_2|_*^p \, d\nu $$
$$\geq \int_{\mathbb{X}} T'(u_1-u_2)  |du_1|_*^p \, d \nu - \frac1p \int_{\mathbb{X}} T'(u_1-u_2)  |du_1|_*^p \, d\nu  - \frac1q \int_{\mathbb{X}} T'(u_1-u_2) \vert X_2 \vert^q \, d \nu $$
$$- \frac1p \int_{\mathbb{X}} T'(u_1-u_2)  |du_2|_*^p \, d\nu  - \frac1q \int_{\mathbb{X}} T'(u_1-u_2) \vert X_1\vert^q \, d \nu + \int_{\mathbb{X}} T'(u_1-u_2) |du_2|_*^p \, d \nu  $$
$$\geq \frac1q \int_{\mathbb{X}} T'(u_1-u_2)  |du_1|_*^p \, d \nu - \frac1q \int_{\mathbb{X}} T'(u_1-u_2) \vert X_1\vert^q \, d \nu$$
$$+\frac1q \int_{\mathbb{X}} T'(u_1-u_2)  |du_2|_*^p \, d \nu - \frac1q \int_{\mathbb{X}} T'(u_1-u_2) \vert X_2\vert^q \, d \nu \geq 0, $$
since $T' \geq 0$ and $| X_i |^q \leq | du_i |_*^p$ $\nu$-a.e. for $i=1,2$. Hence, $\mathcal{A}_p$ is completely accretive.

Finally, by \cite[Proposition 2.11]{Brezis}, we have
$$ D(\partial \mathsf{Ch}_p) \subset  D(\mathsf{Ch}_p) =  W^{1,p}(\mathbb{X},d,\nu) \cap L^2(\mathbb{X},\nu) \subset \overline{D(\mathsf{Ch}_p)}^{L^2(\mathbb{X}, \nu)} \subset \overline{D(\partial \mathsf{Ch}_p)}^{L^2(\mathbb{X}, \nu)},$$
from which follows the density of the domain.
\end{proof}

{
\begin{remark}\label{CA}{\rm Let us point out that for $p=2$ and an additional mild regularity assumption on the space, the characterisation given in Theorem \ref{thm:plaplaceflow} was obtained by Gigli in \cite[Proposition 2.3.14]{Gig}. Also, complete accretivity of the operator $\partial \mathsf{Ch}_p$ was proved in \cite[Proposition 4.15]{AGS2} for $p = 2$ and in \cite[Proposition 3.2]{Kell} for $1 < p < \infty$; we give the proof for the operator $\mathcal{A}_p$ for the sake of completeness.} $\blacksquare$
\end{remark}
}

Furthermore, we may give a more detailed characterisation of solutions in terms of variational inequalities. We present the equivalent characterisations in the following Corollary.

\begin{corollary}\label{cor:subdifferentialplaplace}
The following conditions are equivalent: \\
$(a)$ $(u,v) \in \partial\mathsf{Ch}_p$; \\
$(b)$ $(u,v) \in \mathcal{A}_p$, i.e. $u, v \in L^2(\mathbb{X}, \nu)$, $u \in W^{1,p}(\mathbb{X}, d, \nu)$ and there exists a vector field  $X \in \mathcal{D}^{q,2}(\mathbb{X})$ with  $| X |^q \leq | du |_*^p$ $\nu$-a.e. such that $-\mbox{div}(X) = v$ in $\mathbb{X}$ and
\begin{equation}
du(X) = |du|_*^p  \quad \nu\hbox{-a.e. in } \mathbb{X};
\end{equation}
$(c)$ $u, v \in L^2(\mathbb{X}, \nu)$, $u \in W^{1,p}(\mathbb{X}, d, \nu)$ and there exists a vector field  $X \in  \mathcal{D}^{q,2}(\mathbb{X})$ with  $| X |^q \leq | du |_*^p$ $\nu$-a.e. such that $-\mbox{div}(X) = v$ in $\mathbb{X}$ and for every $w \in L^2(\mathbb{X},\nu) \cap W^{1,p}(\mathbb{X},d,\nu)$
\begin{equation}\label{eq:variationalinequality}
\int_{\mathbb{X}} v(w-u) \, d\nu \leq \int_{\mathbb{X}} dw(X) \, d\nu - \int_{\mathbb{X}} |du|_*^p \, d\nu;
\end{equation}
$(d)$ $u, v \in L^2(\mathbb{X}, \nu)$, $u \in W^{1,p}(\mathbb{X}, d, \nu)$ and there exists a vector field  $X \in \mathcal{D}^{q,2}(\mathbb{X})$ with  $| X |^q \leq | du |_*^p$ $\nu$-a.e. such that $-\mbox{div}(X) = v$ in $\mathbb{X}$ and for every $w \in L^2(\mathbb{X},\nu) \cap W^{1,p}(\mathbb{X},d,\nu)$
\begin{equation}
\int_{\mathbb{X}} v(w-u) \, d\nu = \int_{\mathbb{X}} dw(X) \, d\nu - \int_{\mathbb{X}} |du|_*^p \, d\nu.
\end{equation}
\end{corollary}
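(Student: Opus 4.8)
The equivalence $(a) \Leftrightarrow (b)$ is exactly the statement $\partial \mathsf{Ch}_p = \mathcal{A}_p$ proved in Theorem \ref{thm:plaplaceflow}, so only the relations between $(b)$, $(c)$ and $(d)$ remain. The plan is to close the cycle $(b) \Rightarrow (d) \Rightarrow (c) \Rightarrow (b)$.

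For $(b) \Rightarrow (d)$ I would use the defining property of the $(q,2)$-divergence. Given $(u,v) \in \mathcal{A}_p$ with its vector field $X$, any test function $w \in L^2(\mathbb{X},\nu) \cap W^{1,p}(\mathbb{X},d,\nu)$ satisfies $w - u \in L^2(\mathbb{X},\nu) \cap W^{1,p}(\mathbb{X},d,\nu)$, which is admissible in the definition of $\mathcal{D}^{q,2}(\mathbb{X})$ (here $r = s = 2$). Hence, using linearity of the differential,
\begin{equation*}
\int_{\mathbb{X}} v(w-u) \, d\nu = -\int_{\mathbb{X}} \mbox{div}(X)(w-u) \, d\nu = \int_{\mathbb{X}} d(w-u)(X) \, d\nu = \int_{\mathbb{X}} dw(X) \, d\nu - \int_{\mathbb{X}} du(X) \, d\nu,
\end{equation*}
and inserting the identity $du(X) = |du|_*^p$ from $(b)$ yields the equality in $(d)$. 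The implication $(d) \Rightarrow (c)$ is immediate, since an equality trivially implies the corresponding inequality \eqref{eq:variationalinequality}.

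The only substantial step is $(c) \Rightarrow (b)$, where the pointwise identity $du(X) = |du|_*^p$ $\nu$-a.e. must be recovered from the integral inequality. My plan is to obtain two opposite integral bounds and then exploit a sign. On one hand, the pointwise estimate $|X|^q \leq |du|_*^p$ combined with Young's inequality gives
\begin{equation*}
du(X) \leq |du|_* |X| \leq \frac{1}{p}|du|_*^p + \frac{1}{q}|X|^q \leq |du|_*^p \qquad \nu\hbox{-a.e. in } \mathbb{X},
\end{equation*}
and integrating produces $\int_{\mathbb{X}} du(X) \, d\nu \leq \int_{\mathbb{X}} |du|_*^p \, d\nu$. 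On the other hand, choosing the admissible test function $w = u$ in \eqref{eq:variationalinequality} gives $0 \leq \int_{\mathbb{X}} du(X) \, d\nu - \int_{\mathbb{X}} |du|_*^p \, d\nu$. Combining the two yields $\int_{\mathbb{X}} \big( |du|_*^p - du(X) \big) \, d\nu = 0$ with a $\nu$-a.e. nonnegative integrand, which forces $du(X) = |du|_*^p$ $\nu$-a.e.

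Since all the remaining requirements of $(b)$ (namely $u,v \in L^2(\mathbb{X},\nu)$, $u \in W^{1,p}(\mathbb{X},d,\nu)$, $X \in \mathcal{D}^{q,2}(\mathbb{X})$ with $|X|^q \leq |du|_*^p$, and $-\mbox{div}(X) = v$) are already assumed in $(c)$, this completes the cycle and hence the proof of all equivalences. The delicate point worth emphasizing is precisely this passage from an integral identity to a pointwise one: it relies on the pointwise Young bound being saturated together with $u$ itself being an admissible test function in the variational inequality.
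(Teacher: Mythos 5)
Your proposal is correct and follows essentially the same route as the paper: $(a)\Leftrightarrow(b)$ via Theorem \ref{thm:plaplaceflow}, $(b)\Rightarrow(d)$ by integrating $v=-\mbox{div}(X)$ against $w-u$ using the definition of the $(q,2)$-divergence, $(d)\Rightarrow(c)$ trivially, and $(c)\Rightarrow(b)$ by testing with $w=u$ and collapsing the Young-inequality chain. Your only (welcome) addition is to make explicit the pointwise nonnegativity of $|du|_*^p-du(X)$ when passing from the vanishing integral to the $\nu$-a.e.\ identity, a step the paper leaves implicit.
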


\begin{proof}
The equivalence of $(a)$ and $(b)$ is exactly the content of Theorem \ref{thm:plaplaceflow}. To see that $(b)$ implies $(d)$, multiply the equation $v = -\mbox{div}(X)$ by $w - u$ and integrate over $\mathbb{X}$ with respect to $\nu$. Using the definition of the divergence, we get that
\begin{equation*}
\int_{\mathbb{X}} v(w-u) \, d\nu = - \int_{\mathbb{X}} (w - u) \mbox{div}(X) \, d\nu = \int_{\mathbb{X}} d(w-u)(X) \, d\nu = \int_{\mathbb{X}} dw(X) \, d\nu - \int_{\mathbb{X}} |du|_*^p \, d\nu.
\end{equation*}
It is clear that $(d)$ implies $(c)$. To finish the proof, let us see that $(c)$ implies $(b)$. If we take $w = u$ in \eqref{eq:variationalinequality}, we get
\begin{equation*}
\int_{\mathbb{X}} |du|_*^p \, d\nu \leq \int_{\mathbb{X}} du(X) \, d\nu \leq \int_{\mathbb{X}} |du|_* |X| \, d\nu \leq \int_{\mathbb{X}} \bigg( \frac{1}{p} |du|_*^p + \frac{1}{q} |X|^q \bigg) \, d\nu \leq \int_{\mathbb{X}} |du|_*^p \, d\nu.
\end{equation*}
Hence, $du(X) = |du|_*^p$ $\nu$-a.e. in $\mathbb{X}$.
\end{proof}

In light of the above results, it is natural to introduce the following concept of solution to the gradient flow given by the Cheeger energy $\mathsf{Ch}_p$:

\begin{definition}
We define in $L^2(\mathbb{X},\nu)$ the multivalued operator $\Delta_{p,\nu}$ by
\begin{center}$(u, v) \in \Delta_{p,\nu}$ \ if and only if \ $-v \in \partial\mathsf{Ch}_p(u)$.
\end{center}
\end{definition}

{ We will use the notation $\Delta_{\nu} =\Delta_{2,\nu}$ for the Laplacian.}

{
\begin{remark} {\rm  It should be observed that, in general, the Laplacian $\Delta_\nu$ is not a linear operator: the potential lack of linearity is strictly related to the fact that the space $W^{1,2}(\mathbb{X}, d, \nu)$ needs not be Hilbert. This is the case, for example, on the metric measure space $(\R^N, \Vert \cdot\Vert, \mathcal{L}^N)$
where $\Vert \cdot\Vert$ is any norm not coming from an inner product. The metric measure spaces $(\mathbb{X}, d, \nu)$ for which $W^{1,2}(\mathbb{X}, d, \nu)$ is a Hilbert space (also called infinitesimally Hilbertian spaces) were studied by Gigli in \cite{Gig2}, where are proved several characterizations of these spaces, among others that the Laplacian $\Delta_\nu$ is a linear operator.} $\blacksquare$
\end{remark}

\begin{remark}\label{Trivial}
{\rm Although the content of this section makes sense in a general
metric measure space, as was point out by Ambrosio et al. \cite[Remark 4.12]{AGS2}, if no additional assumption is made it may happen that the constructions presented here are trivial. For instance, choose any sequence $\{a_n \}$ of positive real number such that $\sum_{n=0}^\infty a_n <\infty$. Let $\{q_n \ : \ n \in \N\}$ be an enumeration of the rational numbers. Consider the Borel measure $\nu$ in $\R$ defined by
$$\nu:= \sum_{n=1}^\infty a_n \delta_{q_n},\quad \hbox{where $\delta_{q_n}$ is the Dirac measure at $q_n$},$$
Therefore $W^{1,2}(\R, d_{Eucl}, \nu)= L^2(\R, \nu)$ and all its elements have null minimal weak upper gradient (\cite[Remark 4.12]{AGS2}). Then
$$\mathsf{Ch}_2(u) =0, \qquad \forall \, u \in W^{1,2}(\R, d_{Eucl}, \nu)$$
 and the corresponding gradient flow is trivial.} $\blacksquare$
\end{remark}
}

We have that the abstract Cauchy problem \eqref{eq:cauchylp} corresponds to the Cauchy problem for the $p$-Laplacian, i.e.,
\begin{equation}\label{eq:p-Lapcauchy}
\left\{ \begin{array}{ll} \partial_t u(t) \in  \Delta_{p,\nu}(u(t)), \quad t \in [0,T] \\[5pt] u(0) = u_0. \end{array}\right.
\end{equation}

\begin{definition}\label{dfn:plaplaceflow}
{\rm  Given $u_0 \in L^2(\mathbb{X},\nu)$, we say that $u$ is a {\it weak solution} of the Cauchy problem \eqref{eq:p-Lapcauchy} in $[0,T]$, if { $u \in C([0,T];L^2(\mathbb{X},\nu)) \cap W_{loc}^{1,2}(0, T; L^2(\mathbb{X},\nu))$}, $u(0, \cdot) = u_0$, and for almost all $t \in (0,T)$
\begin{equation}
u_t(t, \cdot) \in \Delta_{p,\nu} u(t, \cdot).
\end{equation}
In other words, if $u(t) \in W^{1,p}(\mathbb{X}, d, \nu)$ and there exist vector fields  $X(t) \in  \mathcal{D}^{q,2}(\mathbb{X})$ with  $| X(t) |^q \leq | du(t) |_*^p$ $\nu$-a.e. such that for almost all $t \in [0,T]$ the following conditions hold:
$$ \mbox{div}(X(t)) = u_t(t, \cdot) \quad \hbox{in } \mathbb{X}; $$
$$ du(t)(X(t)) = |du(t)|_*^p \quad \nu\hbox{-a.e. in } \mathbb{X}.$$
}
\end{definition}

Then, { by the Brezis-Komura Theorem (Theorem \ref{BKTheorem}),} as a consequence of Theorem \ref{thm:plaplaceflow}, we have the following existence and uniqueness theorem.  Here, the comparison principle is consequence of the complete accretivity of the operator $\mathcal{A}_p$.

\begin{theorem}\label{ExisUniqp}
For any $u_0 \in L^2(\mathbb{X}, \nu)$ and all $T > 0$ there exists a unique weak solution $u(t)$ of the Cauchy problem \eqref{eq:p-Lapcauchy} in $[0,T]$, with $u(0) =u_0$.  Moreover, the following comparison principle holds: if $u_1, u_2$ are weak solutions for the initial data $u_{1,0}, u_{2,0} \in  L^2(\mathbb{X}, \nu) \cap  L^r(\mathbb{X}, \nu)$, respectively, then
\begin{equation}\label{CompPrincipleplaplace}
\Vert (u_1(t) - u_2(t))^+ \Vert_r \leq \Vert ( u_{1,0}- u_{2,0})^+ \Vert_r \quad \hbox{for all} \ 1 \leq r \leq \infty.
\end{equation}
\end{theorem}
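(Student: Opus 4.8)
The plan is to invoke the abstract gradient flow theory established earlier in the excerpt. By Theorem \ref{thm:plaplaceflow} we know that $\partial \mathsf{Ch}_p = \mathcal{A}_p$, and the functional $\mathsf{Ch}_p: L^2(\mathbb{X},\nu) \to [0,+\infty]$ is proper, convex, and lower semi-continuous (the latter cited from \cite{AGS1}). Thus the abstract Cauchy problem \eqref{eq:cauchylp}, which is precisely \eqref{eq:p-Lapcauchy} rewritten via the operator $\Delta_{p,\nu}$ with $(u,v) \in \Delta_{p,\nu} \iff -v \in \partial \mathsf{Ch}_p(u)$, fits the hypotheses of the Brezis-Komura Theorem (Theorem \ref{BKTheorem}). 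First I would observe that the domain of $\mathsf{Ch}_p$ is dense in $L^2(\mathbb{X},\nu)$ (shown in Theorem \ref{thm:plaplaceflow}), so that $\overline{D(\mathsf{Ch}_p)}^{L^2(\mathbb{X},\nu)} = L^2(\mathbb{X},\nu)$; hence any initial datum $u_0 \in L^2(\mathbb{X},\nu)$ lies in $\overline{D(\mathsf{Ch}_p)}$, and Theorem \ref{BKTheorem} directly yields a unique strong solution $u \in C([0,T];L^2(\mathbb{X},\nu)) \cap W^{1,2}_{loc}(0,T;L^2(\mathbb{X},\nu))$ with $u(0) = u_0$.

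The next step is to translate the notion of strong solution (Definition \ref{StronSol}) into the notion of weak solution (Definition \ref{dfn:plaplaceflow}). This is essentially a matter of unwinding definitions: the inclusion $u'(t) + \partial \mathsf{Ch}_p(u(t)) \ni 0$ holding for almost all $t$ means $-u'(t) \in \partial \mathsf{Ch}_p(u(t))$, i.e. $u_t(t,\cdot) \in \Delta_{p,\nu}(u(t,\cdot))$. Applying the identification $\partial \mathsf{Ch}_p = \mathcal{A}_p$ from Theorem \ref{thm:plaplaceflow}, for almost every $t$ there is a vector field $X(t) \in \mathcal{D}^{q,2}(\mathbb{X})$ with $|X(t)|^q \leq |du(t)|_*^p$ satisfying $-\mathrm{div}(X(t)) = -u_t(t,\cdot)$ and $du(t)(X(t)) = |du(t)|_*^p$, which is exactly the characterisation in Definition \ref{dfn:plaplaceflow}. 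This establishes existence and uniqueness of the weak solution.

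It remains to prove the comparison principle \eqref{CompPrincipleplaplace}. Here I would use the complete accretivity of $\mathcal{A}_p = \partial \mathsf{Ch}_p$, established in Theorem \ref{thm:plaplaceflow}. By the general theory of completely accretive operators (as in \cite{ACMBook,BCr2}), the semigroup generated by a completely accretive operator is an order-preserving contraction not merely in $L^2$ but in every $L^r$ norm simultaneously, and in particular satisfies the $T$-contraction estimates for the positive and negative parts. Concretely, for two weak solutions $u_1, u_2$ with initial data $u_{1,0}, u_{2,0} \in L^2(\mathbb{X},\nu) \cap L^r(\mathbb{X},\nu)$, complete accretivity gives $\Vert (u_1(t) - u_2(t))^+ \Vert_r \leq \Vert (u_{1,0} - u_{2,0})^+ \Vert_r$ for all $1 \leq r \leq \infty$ and all $t > 0$.

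The main obstacle I anticipate is not the existence or uniqueness, which follow almost immediately from Theorem \ref{BKTheorem} once density of the domain is noted, but rather the precise bookkeeping in the comparison principle: one must carefully cite the correct form of the $L^r$-contraction estimate for completely accretive operators and ensure the initial data lie in the appropriate intersection $L^2 \cap L^r$ so that the right-hand side is finite. The passage from the abstract $T$-contraction property of Theorem \ref{BKTheorem} (stated in the $L^2$-norm) to the full range $1 \leq r \leq \infty$ is exactly where complete accretivity is needed, and I would be careful to reference the standard results on completely accretive operators rather than re-deriving the contraction in each $L^r$.
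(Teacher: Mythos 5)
Your proposal is correct and follows essentially the same route as the paper: the paper likewise obtains existence and uniqueness by feeding the proper, convex, lower semi-continuous functional $\mathsf{Ch}_p$ (with dense domain from Theorem \ref{thm:plaplaceflow}) into the Brezis--Komura Theorem, identifies strong solutions with weak solutions via $\partial \mathsf{Ch}_p = \mathcal{A}_p$, and derives the comparison principle \eqref{CompPrincipleplaplace} from the complete accretivity of $\mathcal{A}_p$ together with the standard $L^r$-contraction theory of completely accretive operators in \cite{ACMBook,BCr2}.
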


{ The definition of the p-Laplacian and the gradient flow is consistent with the one given by Ambrosio, Gigli and Savar\'e in \cite{AGS1} in terms of the subdifferential of the Cheeger energy $\mathsf{Ch}_p$; what we did is give a precise characterisation of this subdifferential. The definition in \cite{AGS1} also includes also a choice of the element with minimal norm in the subdifferential, but it is a standard property of gradient flows of convex functionals (see \cite{Brezis}). Because we have existence and uniqueness of solutions for both definitions, the two notions of solutions to corresponding gradient flows coincide. Some properties of the gradient flow are listed in \cite[Proposition 6.6]{AGS1}. Let us note that these may be also proved directly using Definition \ref{dfn:plaplaceflow}: for instance, when $\nu(X) < \infty$ we have
\begin{equation*}
\int_{\mathbb{X}} u_t(t,\cdot) \, d\nu = \int_{\mathbb{X}} 1 \, \mathrm{div}(X(t)) \, d\nu = \int_{\mathbb{X}} d1(X) \, d\nu = 0,
\end{equation*}
so the gradient flow of $\mathsf{Ch}_p$ preserves mass.



}


As a direct consequence of Corollary \ref{cor:subdifferentialplaplace}, we also get the following characterisation of weak solutions in terms of variational inequalities.

\begin{corollary}
The following conditions are equivalent: \\
$(a)$ $u$ is a weak solution of the Cauchy problem \eqref{eq:p-Lapcauchy}; \\
$(b)$  { $u \in  C([0,T];L^2(\mathbb{X},\nu)) \cap W_{loc}^{1,2}(0, T; L^2(\mathbb{X},\nu))$}, $u(0, \cdot) = u_0$, $u(t) \in W^{1,p}(\mathbb{X}, d, \nu)$ and there exist vector fields  $X(t) \in  \mathcal{D}^{q,2}(\mathbb{X})$ with $| X(t) |^q \leq | du |_*^p$ $\nu$-a.e. such that for almost all $t \in [0,T]$ we have $\mbox{div}(X(t)) = u_t(t, \cdot)$ in $\mathbb{X}$ and
\begin{equation*}
\int_{\mathbb{X}} u_t (u(t) - w) \, d\nu \leq \int_{\mathbb{X}} dw(X(t)) \, d\nu - \int_{\mathbb{X}} |du(t)|_*^p \, d\nu,\quad \forall \, w \in L^2(\mathbb{X},\nu) \cap W^{1,p}(\mathbb{X},\nu).
\end{equation*}
$(c)$  { $u \in C([0,T];L^2(\mathbb{X},\nu)) \cap W_{loc}^{1,2}(0, T; L^2(\mathbb{X},\nu))$}, $u(0, \cdot) = u_0$, $u(t) \in W^{1,p}(\mathbb{X}, d, \nu)$ and there exist vector fields $X(t) \in  \mathcal{D}^{q,2}(\mathbb{X})$ with $| X(t) |^q \leq | du |_*^p$ $\nu$-a.e. such that for almost all $t \in [0,T]$ we have $\mbox{div}(X(t)) = u_t(t, \cdot)$ in $\mathbb{X}$ and
\begin{equation*}
\int_{\mathbb{X}} u_t (u(t) - w) \, d\nu = \int_{\mathbb{X}} dw(X(t)) \, d\nu - \int_{\mathbb{X}} |du(t)|_*^p \, d\nu,\quad \forall \, w \in L^2(\mathbb{X},\nu) \cap W^{1,p}(\mathbb{X},\nu).
\end{equation*}

\end{corollary}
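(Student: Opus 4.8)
The plan is to reduce the whole statement to a pointwise-in-time application of Corollary \ref{cor:subdifferentialplaplace}, which already records the equivalence of the corresponding static conditions. First I would observe that conditions $(a)$, $(b)$, $(c)$ all carry exactly the same structural requirements, namely $u \in C([0,T];L^2(\mathbb{X},\nu)) \cap W_{loc}^{1,2}(0,T;L^2(\mathbb{X},\nu))$ together with $u(0,\cdot) = u_0$. Hence it suffices to prove, for almost every fixed $t \in (0,T)$, the equivalence of the three conditions obtained by freezing $t$ (the existence of a vector field $X(t)$ with the stated bound $|X(t)|^q \le |du(t)|_*^p$ and the various relations it satisfies).

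Next I would unwind the definitions. By Definition \ref{dfn:plaplaceflow} and the definition of the operator $\Delta_{p,\nu}$, condition $(a)$ asserts precisely that for a.e. $t$ one has $-u_t(t,\cdot) \in \partial\mathsf{Ch}_p(u(t,\cdot))$. Fixing such a $t$ and setting $v := -u_t(t,\cdot)$, this is exactly condition $(a)$ of Corollary \ref{cor:subdifferentialplaplace} applied to the pair $(u(t,\cdot), v)$. It then remains to match the remaining conditions under this same substitution: the relation $\mbox{div}(X(t)) = u_t(t,\cdot)$ is equivalent to $-\mbox{div}(X(t)) = v$, and since $\int_{\mathbb{X}} u_t(u(t)-w)\,d\nu = \int_{\mathbb{X}} v(w-u(t))\,d\nu$, condition $(b)$ here coincides with condition $(c)$ of Corollary \ref{cor:subdifferentialplaplace}, while condition $(c)$ here (the equality version) coincides with condition $(d)$ there. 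Because that Corollary establishes the equivalence of its conditions $(a)$, $(c)$, $(d)$, the three frozen-time conditions are equivalent for a.e. $t$, and the proof is finished by reinstating the common regularity and initial-data requirements.

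The argument is essentially bookkeeping, so the one point I would single out as the main (though minor) obstacle is the consistent tracking of the two sign conventions in play: the defining equivalence $(u,v)\in\Delta_{p,\nu} \iff -v \in \partial\mathsf{Ch}_p(u)$, and the orientation $-\mbox{div}(X) = v$ built into the operator $\mathcal{A}_p$. With $v = -u_t$, these combine to yield $\mbox{div}(X) = u_t$, which is exactly the sign appearing in Definition \ref{dfn:plaplaceflow}; checking this compatibility once removes all ambiguity. Finally, I would note that no measurability of the selection $t \mapsto X(t)$ is required, since Definition \ref{dfn:plaplaceflow} only postulates the existence of such vector fields for a.e. $t$, and for each frozen $t$ the existence is furnished directly by Corollary \ref{cor:subdifferentialplaplace}; thus the passage from $(a)$ to $(b)$ and $(c)$ produces the desired $X(t)$ without any additional effort.
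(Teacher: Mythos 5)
Your proposal is correct and follows exactly the route the paper intends: the paper states this corollary as a direct consequence of Corollary \ref{cor:subdifferentialplaplace}, obtained by freezing $t$, setting $v=-u_t(t,\cdot)$, and matching signs, which is precisely your argument. The bookkeeping of the two sign conventions and the remark about not needing measurability of $t\mapsto X(t)$ are both handled correctly.
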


In particular, weak solutions satisfy the variational inequality
\begin{equation*}
\int_{\mathbb{X}} u_t (u-w) \, d\nu \leq \frac{1}{p} \int_{\mathbb{X}} |dw|_*^p \, d\nu - \frac1p \int_{\mathbb{X}} |du|_*^p \, d\nu,
\end{equation*}
which is the standard formulation for gradient flows on metric measure spaces in terms of {\it evolution variational inequalities}, see \cite{AGSBook}. { Alternatively, one can see this inequality as a consequence of the inclusion $u_t(t) \in \Delta_{p,\nu}(u(t))$.}

Finally, let us note that the characterisation of the solutions to the $p$-Laplace evolution equation introduced in this Section agrees with the notion of variational solutions, which goes back to the study of the gradient flow of the area functional by Lichnewsky and Temam in \cite{LT} and was formally introduced in \cite{BDM}.

\begin{corollary}
Suppose that $u$ is a weak solution of the Cauchy problem \eqref{eq:p-Lapcauchy}. Then, for any $v \in L^1_w(0,T; W^{1,p}(\mathbb{X},d,\nu))$ with $\partial_t v \in L^2(\mathbb{X} \times [0,T])$ and $v(0) \in L^2(\mathbb{X},\nu)$ we have
\begin{equation*}
\int_0^T \int_{\mathbb{X}} \partial_t v(v-u) \, d\nu \, dt + \frac1p \int_0^T \int_{\mathbb{X}} |dv(t)|_*^p \, d\nu - \frac1p \int_0^T \int_{\mathbb{X}} |du(t)|_*^p \, d\nu
\end{equation*}
\begin{equation}\label{varineqplaplace}
\geq \frac12 \Vert (v-u)(T) \Vert^2_{L^2(\mathbb{X},\nu)} - \frac12 \Vert v(0)-u_0 \Vert^2_{L^2(\mathbb{X},\nu)}.
\end{equation}
 Note that by \cite[Theorem 3.2]{Brezis} the weak solutions also have this regularity.
\end{corollary}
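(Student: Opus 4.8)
The plan is to derive \eqref{varineqplaplace} directly from the pointwise-in-time evolution variational inequality already recorded for weak solutions, namely that for almost every $t \in (0,T)$ and every $w \in L^2(\mathbb{X},\nu) \cap W^{1,p}(\mathbb{X},d,\nu)$ one has
$$\int_{\mathbb{X}} u_t(t) (u(t) - w) \, d\nu \leq \frac{1}{p} \int_{\mathbb{X}} |dw|_*^p \, d\nu - \frac1p \int_{\mathbb{X}} |du(t)|_*^p \, d\nu.$$
First I would check that $v(t)$ is an admissible test function for almost every $t$: since $v(0) \in L^2(\mathbb{X},\nu)$ and $\partial_t v \in L^2(\mathbb{X} \times [0,T])$, the representation $v(t) = v(0) + \int_0^t \partial_t v(s) \, ds$ shows $v(t) \in L^2(\mathbb{X},\nu)$ for every $t$, while the assumption $v \in L^1_w(0,T; W^{1,p}(\mathbb{X},d,\nu))$ gives $v(t) \in W^{1,p}(\mathbb{X},d,\nu)$ for almost every $t$. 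Inserting $w = v(t)$ and integrating over $(0,T)$ yields
$$\int_0^T \int_{\mathbb{X}} u_t (u - v) \, d\nu \, dt \leq \frac1p \int_0^T \int_{\mathbb{X}} |dv(t)|_*^p \, d\nu \, dt - \frac1p \int_0^T \int_{\mathbb{X}} |du(t)|_*^p \, d\nu \, dt.$$

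Next I would rewrite the left-hand side by means of the pointwise identity
$$\partial_t v \, (v-u) + u_t \, (u-v) = \partial_t (v-u)\,(v-u) = \frac12 \, \partial_t |v-u|^2,$$
which gives
$$\int_0^T \int_{\mathbb{X}} u_t (u-v) \, d\nu \, dt = \frac12 \int_0^T \frac{d}{dt} \Vert (v-u)(t) \Vert_{L^2(\mathbb{X},\nu)}^2 \, dt - \int_0^T \int_{\mathbb{X}} \partial_t v \, (v-u) \, d\nu \, dt.$$
Here I would invoke the Hilbert-space chain rule for the $L^2(\mathbb{X},\nu)$-valued function $v-u$: since $\partial_t(v-u) \in L^2(\mathbb{X}\times[0,T])$ and $v-u \in C([0,T];L^2(\mathbb{X},\nu))$, the scalar map $t \mapsto \Vert (v-u)(t) \Vert_{L^2(\mathbb{X},\nu)}^2$ is absolutely continuous with derivative $2\int_{\mathbb{X}} \partial_t(v-u)(v-u)\,d\nu$ (cf. \cite[Corollaire A.2]{Brezis}), so the first term on the right equals $\Vert (v-u)(T) \Vert^2_{L^2(\mathbb{X},\nu)} - \Vert v(0)-u_0 \Vert^2_{L^2(\mathbb{X},\nu)}$, using $u(0)=u_0$.

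Substituting this into the integrated inequality and rearranging gives exactly \eqref{varineqplaplace}. The main obstacle is the regularity at the temporal endpoints: the chain rule and the resulting boundary evaluations require $v-u \in W^{1,2}(0,T;L^2(\mathbb{X},\nu))$ up to $t=0$ and $t=T$, whereas a priori the weak solution only satisfies $u \in W^{1,2}_{loc}(0,T;L^2(\mathbb{X},\nu))$. This gap is exactly what the regularity theory for gradient flows closes, as noted after the statement: by \cite[Theorem 3.2]{Brezis} the solution $u$ in fact belongs to $W^{1,2}(0,T;L^2(\mathbb{X},\nu))$, so $\partial_t u \in L^2(\mathbb{X}\times[0,T])$ matches the assumed regularity of $v$ and all the time integrals together with the endpoint terms are well defined. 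One should also verify the measurability in $t$ of the maps $t \mapsto \int_{\mathbb{X}} |dv(t)|_*^p \, d\nu$ and $t \mapsto \int_{\mathbb{X}} |du(t)|_*^p \, d\nu$, which follows from the weak measurability built into $v \in L^1_w(0,T;W^{1,p}(\mathbb{X},d,\nu))$ and from the weak-solution regularity of $u$; this ensures that the integrated inequality and the substitution of $w=v(t)$ are legitimate.
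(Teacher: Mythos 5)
Your proposal is correct and follows essentially the same route as the paper: both arguments split $\partial_t v\,(v-u)$ into $\partial_t(v-u)(v-u)$, which integrates to the boundary terms $\tfrac12\Vert (v-u)(T)\Vert^2_{L^2(\mathbb{X},\nu)}-\tfrac12\Vert v(0)-u_0\Vert^2_{L^2(\mathbb{X},\nu)}$, plus $\partial_t u\,(v-u)$, which is controlled by the convexity of $\mathsf{Ch}_p$. The only difference is that you invoke the already-recorded pointwise variational inequality $\int_{\mathbb{X}} u_t(u-w)\,d\nu \leq \tfrac1p\int_{\mathbb{X}}|dw|_*^p\,d\nu - \tfrac1p\int_{\mathbb{X}}|du|_*^p\,d\nu$ directly, whereas the paper re-derives that bound inside the proof via the vector field $X(t)$, the identity $du(t)(X(t))=|du(t)|_*^p$, and Young's inequality; your attention to the admissibility of $v(t)$ as a test function and to the endpoint regularity via \cite[Theorem 3.2]{Brezis} is appropriate.
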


\begin{proof}
Given a test function $v$ as above, we want to show that \eqref{varineqplaplace} holds. We start by computing the term with the time derivative using the characterisation of weak solutions. Using the definition of the divergence, we have
$$\int_0^T \int_{\mathbb{X}} \partial_t u \, (v - u) \, d \nu \, dt = \int_0^T \int_{\mathbb{X}} \mbox{div}(X(t)) (v - u) \, d \nu \, dt $$
$$ = - \int_0^T \int_{\mathbb{X}} dv(t)(X(t)) \, d\nu \, dt + \int_0^T \int_{\mathbb{X}} du(t)(X(t)) \, d\nu \, dt.$$
Also,
$$\int_0^T \int_{\mathbb{X}} (\partial_t v - \partial_t u )(v - u) \, d \nu \, dt = \frac12 \Vert (v-u)(T) \Vert^2_{L^2(\mathbb{X},\nu)} - \frac12 \Vert v(0)-u_0 \Vert^2_{L^2(\mathbb{X},\nu)}. $$
Since $u$ is a weak solution, we add the two equalities and get
$$\int_0^T \int_{\mathbb{X}} \partial_t v \, (v - u) \, d \nu \, dt = - \int_0^T \int_{\mathbb{X}} dv(t)(X(t)) \, d\nu \, dt + \int_0^T \int_{\mathbb{X}} du(t)(X(t)) \, d\nu \, dt $$
$$ + \frac12 \Vert (v-u)(T) \Vert^2_{L^2(\mathbb{X},\nu)} - \frac12 \Vert v(0)-u_0 \Vert^2_{L^2(\mathbb{X},\nu)} \geq - \frac1p \int_0^T \int_{\mathbb{X}} \vert dv(t) \vert_*^p d\nu dt - \frac1q \int_0^T \int_{\mathbb{X}} \vert X(t) \vert^q d\nu dt $$
$$ + \int_0^T \int_{\mathbb{X}} \vert du(t) \vert_*^p d\nu dt + \frac12 \Vert (v-u)(T) \Vert^2_{L^2(\mathbb{X},\nu)} - \frac12 \Vert v(0)-u_0 \Vert^2_{L^2(\mathbb{X},\nu)}$$
$$ \geq - \frac1p \int_0^T \int_{\mathbb{X}} \vert dv(t) \vert_*^p d\nu dt + \frac1p \int_0^T \int_{\mathbb{X}} \vert du(t) \vert_*^p d\nu dt + \frac12 \Vert (v-u)(T) \Vert^2_{L^2(\mathbb{X},\nu)} - \frac12 \Vert v(0)-u_0 \Vert^2_{L^2(\mathbb{X},\nu)}.$$
\end{proof}

\section{Anzellotti pairings on metric measure spaces}\label{sec:Anzellotti}

In order to study the total variation flow, we need to introduce some additional assumptions on the metric measure space $(\mathbb{X},d,\nu)$. This is due to the fact solutions in order for the Cheeger energy $\mathsf{Ch}_1$ to be lower semicontinuous, it needs to be defined on the space of functions of bounded variation $BV(\mathbb{X},d,\nu)$ and not on the Sobolev space $W^{1,1}(\mathbb{X},d,\nu)$. Thus, we need to extend parts of the linear differential structure to the BV case, and to this end we need we will require that we can approximated BV functions with Lipschitz functions in a suitable way. Our strategy will be to introduce a metric version of the Anzellotti pairings introduced in \cite{Anz} and prove a Gauss-Green formula which will work as a replacement of the integration by parts given by the definition of the divergence of a vector field in $\mathcal{D}^{\infty,p}(\mathbb{X})$.

In this Section, we suppose that the metric space $(\mathbb{X},d)$ is complete, separable, equipped with a doubling measure $\nu$, and that the metric measure space $(\mathbb{X},d,\nu)$ supports a weak $(1,1)$-Poincar\'e inequality. In particular, these assumptions imply that $\mathbb{X}$ is locally compact, see \cite[Proposition 3.1]{BB}.

\subsection{An approximation result}

In order to define a generalised version of Anzellotti pairings, we will need to approximate a $BV$ function by regular enough functions, in the spirit of \cite[Lemma 5.2]{Anz}. Existence of a sequence of locally Lipschitz functions which approximate the desired function in the strict topology is automatic by virtue of Definition \ref{dfn:totalvariationonmetricspaces}; we will prove that we may require some additional properties of the approximating sequence.

\begin{lemma}\label{lem:lipschitzapproximation}

Suppose that $u \in BV(\mathbb{X},d, \nu)$. There exists a sequence of Lipschitz functions $u_n \in \mbox{Lip}(\mathbb{X}) \cap BV(\mathbb{X},d,\nu)$ such that: \\
$(1)$ $u_n \rightarrow u$ strictly in $BV(\mathbb{X},d,\nu)$;
\\
$(2)$ Let $p \in [1,\infty)$. If $u \in L^p(\mathbb{X},\nu)$, then $u_n \in L^p(\mathbb{X},\nu)$ and $u_n \rightarrow u$ in $L^p(\mathbb{X},\nu)$;
\\
$(3)$ If $u \in L^\infty(\mathbb{X},\nu)$, then $u_n \in L^\infty(\mathbb{X},\nu)$ and $u_n \rightharpoonup u$ weakly* in $L^\infty(\mathbb{X},\nu)$.
\end{lemma}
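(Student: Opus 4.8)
The plan is to build the approximating sequence in two stages: first treat bounded $u$ by truncating a strict Lipschitz approximation, then reduce the general case to the bounded one by truncating $u$ itself and diagonalising. The starting point (giving conclusion $(1)$) is that by the definition \eqref{dfn:totalvariationonmetricspaces} of the total variation together with the density of Lipschitz functions in $W^{1,1}(\mathbb{X},d,\nu)$ under our standing doubling and $(1,1)$-Poincar\'e hypotheses, there is a sequence $v_n \in \mathrm{Lip}(\mathbb{X})$ with $v_n \to u$ in $L^1(\mathbb{X},\nu)$ and $\int_{\mathbb{X}}|\nabla v_n|\,d\nu \to |Du|_\nu(\mathbb{X})$. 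Since each such $v_n$ lies in $W^{1,1}$ and its slope represents the minimal weak upper gradient, we have $|Dv_n|_\nu(\mathbb{X}) = \int_{\mathbb{X}}|\nabla v_n|\,d\nu$, so $v_n \to u$ strictly in $BV$.

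\emph{Bounded case.} Assume in addition $u \in L^\infty(\mathbb{X},\nu)$ and set $M = \|u\|_{L^\infty}$. Define $u_n := T_M(v_n)$, where $T_M(s) = \max(-M,\min(M,s))$. Each $u_n$ is Lipschitz, bounded and integrable (hence in $L^p$ for all $p$) with $\|u_n\|_{L^\infty}\le M$; since $T_M$ is $1$-Lipschitz and $T_M(u)=u$ we have $|u_n-u|\le|v_n-u|$, so $\|u_n - u\|_{L^p}^p \le (2M)^{p-1}\|v_n-u\|_{L^1}\to 0$ for every $p\in[1,\infty)$. Moreover $|\nabla T_M(v_n)| = \mathbf{1}_{\{|v_n|<M\}}\,|\nabla v_n|\le|\nabla v_n|$ gives $|Du_n|_\nu(\mathbb{X})\le\int_{\mathbb{X}}|\nabla v_n|\,d\nu$, which combined with lower semicontinuity of the total variation forces $|Du_n|_\nu(\mathbb{X})\to|Du|_\nu(\mathbb{X})$; hence $u_n\to u$ strictly. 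Finally $L^1$ convergence together with the uniform bound $\|u_n\|_\infty\le M$ yields weak-$*$ convergence in $L^\infty$, so $(1)$, $(2)$ and $(3)$ all hold.

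\emph{General case.} For arbitrary $u \in BV(\mathbb{X},d,\nu)$ put $u^{(k)} := T_k(u)$. Then $u^{(k)}\in BV\cap L^\infty$, $u^{(k)}\to u$ in every $L^p$ for which $u\in L^p$ (dominated convergence), and $|Du^{(k)}|_\nu(\mathbb{X})\to|Du|_\nu(\mathbb{X})$ (truncation does not increase the total variation, e.g.\ by the coarea formula, while lower semicontinuity gives the reverse inequality), so $u^{(k)}\to u$ strictly. Applying the bounded case to each $u^{(k)}$ produces Lipschitz functions $z^{(k)}_m$, bounded by $k$, with $z^{(k)}_m\to u^{(k)}$ strictly in $BV$ and, since $|z^{(k)}_m-u^{(k)}|\le 2k$, with the bound $\|z^{(k)}_m-u^{(k)}\|_{L^p}^p\le(2k)^{p-1}\|z^{(k)}_m-u^{(k)}\|_{L^1}$. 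Choosing $m(k)$ so that $\|z^{(k)}_{m(k)}-u^{(k)}\|_{L^1}\le(2k)^{-k}$ and $\big|\,|Dz^{(k)}_{m(k)}|_\nu(\mathbb{X})-|Du^{(k)}|_\nu(\mathbb{X})\,\big|\le 1/k$, and setting $w_k := z^{(k)}_{m(k)}$, the fast decay of the errors gives $\|w_k-u^{(k)}\|_{L^p}^p\le(2k)^{p-1-k}\to 0$ for each fixed $p$; together with $u^{(k)}\to u$ this yields $w_k\to u$ in every admissible $L^p$ and strictly in $BV$, i.e.\ $(1)$ and $(2)$. When $u\in L^\infty$ we keep instead the sequence from the bounded case, which additionally gives $(3)$.

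The main obstacle is keeping the convergence \emph{strict} (not merely weak) throughout the truncations: lower semicontinuity only supplies $|Du|_\nu(\mathbb{X})\le\liminf|Du_n|_\nu(\mathbb{X})$, so one must produce the matching upper bound, which comes precisely from the fact that truncation does not increase total variation. This rests on the chain rule for the slope of Lipschitz (Sobolev) functions and the coarea formula for $BV$ functions, both available here thanks to the doubling and Poincar\'e assumptions. The only other delicate point is arranging a single diagonal sequence converging in \emph{all} admissible $L^p$ at once, which is why the $L^1$-errors are made to decay faster than any fixed power of $k$.
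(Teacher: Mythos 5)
Your proof is correct and follows essentially the same route as the paper's: a strict Lipschitz approximation obtained from the definition of the total variation plus density of Lipschitz functions in $W^{1,1}(\mathbb{X},d,\nu)$, truncation combined with the layer-cake estimate $\|\cdot\|_{L^p}^p \leq (2M)^{p-1}\|\cdot\|_{L^1}$ to upgrade to $L^p$ convergence, the fact that truncation does not increase the total variation together with lower semicontinuity to preserve strictness, a diagonal extraction, and the uniform $L^\infty$ bound for the weak-$*$ statement. The only difference is organizational (you settle the bounded case first and then reduce to it by truncating $u$, whereas the paper truncates $u$ and the approximants simultaneously and diagonalises over both indices), which does not change the substance.
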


\begin{proof}
(1) Take any sequence of locally Lipschitz functions $u_n \in \mbox{Lip}_{loc}(\mathbb{X})$ convergent to $u$ in $L^1(\mathbb{X},\nu)$ given by Definition \ref{dfn:totalvariationonmetricspaces}; in particular, $u_n \in W^{1,1}(\mathbb{X},d,\nu)$. Since $(\mathbb{X},d,\nu)$ supports the weak $(1,1)$-Poincar\'e inequality, Lipschitz functions are dense in $W^{1,1}(\mathbb{X},d,\nu)$. Now, let $u_{n_k} \in \mbox{Lip}(\mathbb{X})$ be a sequence of Lipschitz functions which approximates $u_n$ in $W^{1,1}(\mathbb{X},d,\nu)$ as $k \rightarrow \infty$; by a diagonal argument we may choose a Lipschitz subsequence $u_{n_{k(n)}}$ which converges to $u$ strictly in $BV(\mathbb{X},d,\nu)$. For simplicity, we will denote this sequence by $u_n$.

(2) We need to show that after a suitable modification the functions $u_n$ constructed above additionally satisfy $u_n \in L^p(\mathbb{X},\nu)$ and $u_n \rightarrow u$ in $L^p(\mathbb{X},\nu)$. Given $v \in L^p(\mathbb{X},\nu)$, denote
\begin{equation}\label{eq:restrictionoffunction}
v_M(x) = \threepartdef{M}{v(x) > M;}{v(x)}{v(x) \in [-M,M];}{-M}{v(x) < -M.}
\end{equation}
Observe that $v_M \in L^\infty(\mathbb{X},\nu)$ and $v_M \rightarrow v$ in $L^p(\mathbb{X},\nu)$ as $M \rightarrow \infty$.

Now, recall that $u_n \rightarrow u$ strictly in $BV(\mathbb{X},d,\nu)$ as $n \rightarrow \infty$. Moreover, for every $M > 0$, we also have that $(u_n)_M \rightarrow u_M$ in $L^p(\mathbb{X},\nu)$ as $n \rightarrow \infty$:
$$ \| (u_n)_M - u_M \|_{L^p(\mathbb{X},\nu)}^p = \int_0^\infty t^{p-1} \, \nu(\{ |(u_n)_M - u_M| > t \}) \, dt \leq $$
$$ \leq (2M)^{p-1} \int_0^\infty \nu(\{ |(u_n)_M - u_M| > t \}) \, dt = (2M)^{p-1} \| (u_n)_M - u_M \|_{L^1(\mathbb{X},\nu)} \rightarrow 0,$$
because $|(u_n)_M - u_M| \leq 2M$ and $u_n \rightarrow u$ in $L^1(\mathbb{X},\nu)$. Hence, by a diagonalisation argument, there exists a sequence $(u_{n_k})_{M_k}$ such that
$$ (u_{n_k})_{M_{k}} \rightarrow u \quad \mbox{ in } L^p(\mathbb{X},\nu).$$
Moreover, this sequence also converges strictly in $BV(\mathbb{X},d,\nu)$, since truncations do not increase the slope:
$$ |Du|_\nu(\mathbb{X}) \leq \liminf_{k \rightarrow \infty} \int_{\mathbb{X}} |\nabla (u_{n_k})_{M_k}| \, d\nu \leq \liminf_{k \rightarrow \infty} \int_{\mathbb{X}} |\nabla u_{n_k}| \, d\nu = |Du|_\nu(\mathbb{X}).$$
Hence, possibly replacing the sequence $u_n$ by $(u_{n_k})_{M_k}$, we may require that $u_n \in L^p(\mathbb{X},\nu)$ and $u_n \rightarrow u$ strictly in $BV(\mathbb{X},d,\nu)$ and in the norm convergence in $L^p(\mathbb{X},\nu)$. Hence, up to the modification of $u_n$ described above, we may require that $u_n$ satisfies these properties as well.

(3) Notice that if $u_n \rightarrow u$ is the sequence given by  \eqref{dfn:totalvariationonmetricspaces}, then $(u_n)$ is bounded in $L^\infty(\mathbb{X},\nu)$ by $\| u \|_{L^\infty(\mathbb{X},\nu)}$ and by the argument from the proof of point (2) it converges to $u$ in $L^p(\mathbb{X},\nu)$ for every $p \in [1,\infty)$. Hence, it admits a weakly* convergent subsequence $u_{n_k}$. By the uniqueness of the weak* limit, we have $u_{n_k} \rightharpoonup u$ weakly* in $L^\infty(\mathbb{X},\nu)$.
\end{proof}

\subsection{Introducing the pairing}
	
The main goal of this subsection is to define the pairing $(X, Du)$ between a vector field $X$ with integrable divergence and a $BV$ function $u$. This will be a metric space analogue of the classic Anzellotti pairing introduced in \cite{Anz}.

Assume that  $X \in L^\infty(T\mathbb{X})$ and $u \in BV(\mathbb{X}, d, \nu)$. As in the case of classical Anzellotti pairings, we will additionally assume a joint regularity condition on $u$ and $X$ which makes the pairing well-defined. The condition is as follows: for $p \in [1,\infty)$, we have
\begin{equation}\label{Anzellotti:assumption}
\mbox{div}(X) \in L^p(\mathbb{X},\nu), \quad u \in BV(\mathbb{X},d, \nu) \cap L^{q}(\mathbb{X},\nu), \quad \frac{1}{p} + \frac{1}{q} = 1.
\end{equation}
 In other words, we have $X \in \mathcal{D}^{\infty,p}(\mathbb{X})$. In the proofs, we will sometimes differentiate between the cases when $p > 1$ and $p = 1$.

\begin{definition}
Suppose that the pair $(X, u)$ satisfies the condition \eqref{Anzellotti:assumption}. Then, given a Lipschitz function  $f \in \mbox{Lip}(\mathbb{X})$ with compact support, we set
$$ \langle (X, Du), f \rangle := -\int_{\mathbb{X}} u \, \mbox{div}(fX) \, d\nu =  -\int_{\mathbb{X}} u \, df(X) \, d\nu - \int_{\mathbb{X}} u f \mbox{div}(X) \, d\nu.$$
\end{definition}

\begin{proposition}\label{prop:boundonAnzellottipairing}
$(X, Du)$ is a Radon measure which is absolutely continuous with respect to $|Du|_\nu$.  Moreover, for every Borel set $A \subset \mathbb{X}$ we have
$$ \int_A |(X,Du)| \leq \| X \|_\infty \int_A |Du|_\nu.$$
\end{proposition}

\begin{proof}
For now, assume additionally that $u \in \mbox{Lip}(\mathbb{X})$. We note that  $fX \in \mathcal{D}^{\infty,p}(\mathbb{X})$ for all Lipschitz functions $f \in \mbox{Lip}(\mathbb{X})$ with compact support. Hence, by  the $L^\infty$-linearity of the differential, we have
$$ |\langle (X, Du), f \rangle| = \left|-\int_{\mathbb{X}} u \, \mbox{div}(fX) \, d\nu \right| =  \left|\int_{\mathbb{X}} du(fX) \, d\nu \right| = \left|\int_{\mathbb{X}} f \cdot du(X) \, d\nu \right| $$
$$ \leq \| f \|_\infty \left| \int_{\mathbb{X}} du(X) \, d\nu \right| \leq \| f \|_\infty \left|\int_{\mathbb{X}} |du|_* |X| \, d\nu \right| \leq \| f \|_\infty \| X \|_\infty \int_{\mathbb{X}} |Du| \, d\nu,$$
where in the last inequality we used that $|du|_* = |Du|$ $\nu$-a.e.

When $u \in BV(\mathbb{X},d,\nu)$ and it satisfies the assumption \eqref{Anzellotti:assumption}, take the sequence $u_n \in \mbox{Lip}(\mathbb{X})$ given by Lemma \ref{lem:lipschitzapproximation}. Notice that for any fixed $g \in \mbox{Lip}(\mathbb{X})$ with compact support we have
$$\langle (X, Du_j), g \rangle = -\int_{\mathbb{X}} u_j \, \mbox{div}(gX) \, d\nu \to - \int_{\mathbb{X}} u \, \mbox{div}(gX)  \, d\nu  = \langle (X,Du), g \rangle,$$
where the convergence is guaranteed by Lemma \ref{lem:lipschitzapproximation}. Indeed,  when the assumption \eqref{Anzellotti:assumption} is satisfied with $p > 1$, it works because $u_i \rightarrow u$ in $L^q(\mathbb{X},\nu)$ and $\mbox{div}(X) \in L^p(\mathbb{X},\nu)$ (so also $\mbox{div}(gX) \in L^p(\mathbb{X},\nu)$); on the other hand, when the assumption \eqref{Anzellotti:assumption} is satisfied with $p = 1$, it works because $u_i \rightarrow u$ weakly* in $L^\infty(\mathbb{X},\nu)$ and $\mbox{div}(X) \in L^1(\mathbb{X},\nu)$. Hence,
$$ |\langle (X, Du), f \rangle| = \lim_{n \rightarrow \infty} |\langle (X, Du_n), f \rangle| $$ $$\leq \lim_{n \rightarrow \infty} \| f \|_\infty \| X \|_\infty {\int_{\mathbb{X}} |Du_n| \, d\nu} = \| f \|_\infty \| X \|_\infty | Du |_\nu(\mathbb{X}).$$
Thus, $(X, Du)$ is a continuous functional on the space of Lipschitz functions. Since Lipschitz functions are dense in continuous functions, $(X,Du)$ defines a continuous functional on the space $C(\mathbb{X})$. Since by our assumptions $\mathbb{X}$ is locally compact, by Riesz representation theorem $(X,Du)$ is a Radon measure, $|(X, Du)| \ll \| X \|_\infty |Du|_\nu$ as measures and it satisfies the desired bound.
\end{proof}

Before we prove the Green's formula, we require one more technical result.
		
\begin{lemma}\label{lem:continuityofXDu}
Suppose that $u_i \rightarrow u$ as in the statement of Lemma \ref{lem:lipschitzapproximation}. Assume that the pair $(X,u)$ satisfies  the condition \eqref{Anzellotti:assumption}. Then
$$ \int_{\mathbb{X}} (X, Du_i) \rightarrow \int_{\mathbb{X}} (X,Du).$$
\end{lemma}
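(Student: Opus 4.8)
The plan is to prove the convergence $\int_{\mathbb{X}} (X, Du_i) \to \int_{\mathbb{X}} (X, Du)$ by exploiting the definition of the pairing together with the approximation properties guaranteed by Lemma \ref{lem:lipschitzapproximation}. The key observation is that integrating the measure $(X, Du)$ over the whole space $\mathbb{X}$ should correspond, at the level of the definition, to testing against the constant function $f \equiv 1$. However, $1$ is not a Lipschitz function with compact support, so one cannot directly plug it into the definition of $\langle (X, Du), f \rangle$. The main point of the proof will therefore be to justify passing from compactly supported test functions to the total mass. I would first establish the total-mass formula for a \emph{Lipschitz} function $u_i$, where the computation is clean: since $u_i \in \mathrm{Lip}(\mathbb{X})$, the pairing $(X, Du_i)$ is represented $\nu$-a.e. by the $L^1$ density $du_i(X)$, so that $\int_{\mathbb{X}} (X, Du_i) = \int_{\mathbb{X}} du_i(X) \, d\nu$.

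First I would record that for each Lipschitz $u_i$ one has
\begin{equation*}
\int_{\mathbb{X}} (X, Du_i) = \int_{\mathbb{X}} du_i(X) \, d\nu,
\end{equation*}
which follows from the proof of Proposition \ref{prop:boundonAnzellottipairing}: there the density of $(X,Du_i)$ was identified with $du_i(X)$ via $\langle (X, Du_i), f\rangle = \int_{\mathbb{X}} f \, du_i(X)\, d\nu$, and one integrates this identity by a monotone/dominated exhaustion of $\mathbb{X}$ by compactly supported Lipschitz cutoffs $\eta_j \nearrow 1$ (available since $\mathbb{X}$ is locally compact by our standing assumptions). Next I would pass to the limit on the right-hand side. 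Recalling from the definition of the divergence that
\begin{equation*}
\int_{\mathbb{X}} du_i(X) \, d\nu = - \int_{\mathbb{X}} u_i \, \mathrm{div}(X) \, d\nu,
\end{equation*}
the convergence of the right-hand side reduces to the convergence $\int_{\mathbb{X}} u_i \, \mathrm{div}(X) \, d\nu \to \int_{\mathbb{X}} u \, \mathrm{div}(X)\, d\nu$, which is exactly the type of convergence established inside the proof of Proposition \ref{prop:boundonAnzellottipairing}: when \eqref{Anzellotti:assumption} holds with $p > 1$ we use $u_i \to u$ in $L^q(\mathbb{X},\nu)$ against $\mathrm{div}(X) \in L^p(\mathbb{X},\nu)$, and when $p = 1$ we use $u_i \rightharpoonup u$ weakly* in $L^\infty(\mathbb{X},\nu)$ against $\mathrm{div}(X) \in L^1(\mathbb{X},\nu)$.

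It then remains to identify the limit $-\int_{\mathbb{X}} u \, \mathrm{div}(X)\, d\nu$ with $\int_{\mathbb{X}} (X, Du)$, i.e. with the total mass of the limiting measure. This is the step I expect to require the most care, since $(X,Du)$ is only a Radon measure and its total mass is $\int_{\mathbb{X}} (X, Du) = \lim_j \langle (X,Du), \eta_j\rangle$ along the cutoffs $\eta_j \nearrow 1$. Using the definition $\langle (X,Du), \eta_j \rangle = -\int_{\mathbb{X}} u \, \mathrm{div}(\eta_j X)\, d\nu = -\int_{\mathbb{X}} u \, d\eta_j(X)\, d\nu - \int_{\mathbb{X}} u \eta_j \, \mathrm{div}(X)\, d\nu$, the second term converges to $-\int_{\mathbb{X}} u \, \mathrm{div}(X)\, d\nu$ by dominated convergence, so the crux is controlling the error term $\int_{\mathbb{X}} u \, d\eta_j(X)\, d\nu$. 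The main obstacle is thus showing this term vanishes as $j \to \infty$; I would handle it by choosing the cutoffs $\eta_j$ with $|d\eta_j|_* \le C/j$ supported on annuli $B(x_0, 2R_j)\setminus B(x_0,R_j)$ with $R_j \to \infty$, so that $\big|\int_{\mathbb{X}} u\, d\eta_j(X)\, d\nu\big| \le \|X\|_\infty \int_{B(x_0,2R_j)\setminus B(x_0,R_j)} |u| \, |d\eta_j|_* \, d\nu \to 0$ by the integrability of $u$ (using $u \in L^q$ with $q<\infty$ when $p>1$, and absorbing the $p=1$ case by the $L^\infty$ bound together with finiteness of $\nu$ on bounded sets after a Hölder estimate on the annulus). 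Combining the three displays then yields $\int_{\mathbb{X}} (X,Du) = -\int_{\mathbb{X}} u\,\mathrm{div}(X)\,d\nu = \lim_i \int_{\mathbb{X}} du_i(X)\,d\nu = \lim_i \int_{\mathbb{X}} (X, Du_i)$, which is the claim.
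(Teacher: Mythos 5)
Your proposal is correct, but it follows a genuinely different route from the paper. The paper works directly with the measures: it fixes a compactly supported Lipschitz cutoff $g$ equal to $1$ on a large set $A$ with $|Du|_\nu(\mathbb{X}\setminus A)<\varepsilon$, splits $1=g+(1-g)$, uses the (already established) convergence $\langle (X,Du_i),g\rangle\to\langle (X,Du),g\rangle$ for fixed $g$, and controls the two remainder terms by $\|X\|_\infty|Du_i|_\nu(\mathbb{X}\setminus A)$ and $\|X\|_\infty|Du|_\nu(\mathbb{X}\setminus A)$; the first of these is handled via the \emph{strict} convergence $|Du_i|_\nu(\mathbb{X})\to|Du|_\nu(\mathbb{X})$, which gives $\limsup_i|Du_i|_\nu(\mathbb{X}\setminus A)\le|Du|_\nu(\mathbb{X}\setminus A)$. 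You instead reduce everything to the scalar quantity $-\int_{\mathbb{X}}u\,\mathrm{div}(X)\,d\nu$: steps 1--2 give $\int_{\mathbb{X}}(X,Du_i)=-\int_{\mathbb{X}}u_i\,\mathrm{div}(X)\,d\nu\to-\int_{\mathbb{X}}u\,\mathrm{div}(X)\,d\nu$, and step 3 is a direct proof of the Gauss--Green identity $\int_{\mathbb{X}}(X,Du)=-\int_{\mathbb{X}}u\,\mathrm{div}(X)\,d\nu$ via an exhaustion by cutoffs $\eta_j\nearrow 1$ with $|d\eta_j|_*\le 1/R_j$ on escaping annuli, the error term vanishing because $u\in BV(\mathbb{X},d,\nu)\subset L^1(\mathbb{X},\nu)$ (this is the clean reason; the H\"older/$L^\infty$ case distinction in your parenthetical is unnecessary). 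In effect you prove Theorem \ref{thm:generalgreensformula} first and deduce the lemma from it, reversing the paper's logical order (the paper uses the lemma to prove the Gauss--Green formula); there is no circularity since your step 3 does not invoke the lemma. What each approach buys: yours avoids using strict convergence of $|Du_i|_\nu$ entirely (only $L^q$ or weak* convergence of $u_i$ plus $u_i\in W^{1,1}\cap L^q$ is needed), at the cost of requiring properness of $\mathbb{X}$ to build the exhaustion and the Leibniz rule $\mathrm{div}(\eta_j X)=d\eta_j(X)+\eta_j\,\mathrm{div}(X)$; the paper's argument is a self-contained tightness estimate that stays at the level of the pairings and does not need any information about $\mathrm{div}(X)$ beyond its integrability.
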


\begin{proof}
Fix $\varepsilon > 0$. Choose an open set $A \subset \subset \mathbb{X}$ such that
$$\int_{\mathbb{X} \backslash A} |Du|_\nu < \varepsilon.$$
Let $g \in \mbox{Lip}(\mathbb{X})$ be such that $0 \leq g \leq 1$ in $\mathbb{X}$, $g \equiv 1$ in $A$ and $g$ has compact support. We can choose such $g$ thanks to the Tietze extension theorem and density of Lipschitz functions in $C(\mathbb{X})$. We write $1 = g + (1 - g)$ and estimate
$$ \bigg| \int_{\mathbb{X}} (X, Du_j) - \int_{\mathbb{X}} (X, Du) \bigg| \leq \bigg| \langle (X, Du_j), g \rangle -  \langle (X, Du), g \rangle  \bigg| + $$
$$ + \int_{\mathbb{X}} |(X, Du_j)| (1-g) + \int_{\mathbb{X}} |(X, Du)| (1-g).$$
Arguing as in the proof of Proposition \ref{prop:boundonAnzellottipairing}, we see that for any fixed $g \in \mbox{Lip}(\mathbb{X})$ with compact support we have $\langle (X, Du_j), g \rangle \rightarrow \langle (X, Du), g \rangle$. Moreover, we have
$$\int_{\mathbb{X}} (1-g) |(X, Du)| \leq \int_{{\mathbb{X}} \backslash A} |(X, Du)| \leq \| X \|_\infty \int_{{\mathbb{X}} \backslash A} |Du|_\nu < \varepsilon \| X \|_\infty$$
and similarly
$$\limsup_{j \rightarrow \infty} \int_{\mathbb{X}} (1-g) |(X, Du_j)| \leq \limsup_{j \rightarrow \infty} \| X \|_\infty \int_{{\mathbb{X}} \backslash A} |Du_j|_\nu \leq \varepsilon \| X \|_\infty ,$$
so we can make the right hand side arbitrarily small.
\end{proof}

Now, we prove the result which motivates the construction of the Anzellotti pairings above. Namely, we show that the Green formula can be extended to the setting of BV functions in place of Lipschitz functions.
	
\begin{theorem}\label{thm:generalgreensformula}
Suppose that the pair $(X,u)$ satisfies  the condition \eqref{Anzellotti:assumption}. Then
$$ \int_\mathbb{X} u \,  \mbox{div}(X) \, d\nu + \int_\mathbb{X} (X,Du) = 0.$$
\end{theorem}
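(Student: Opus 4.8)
The plan is to prove the identity first for the Lipschitz approximants of $u$ furnished by Lemma \ref{lem:lipschitzapproximation}, where both integrals can be computed explicitly through the definition of the divergence, and then to pass to the limit. So I would fix a sequence $u_n \in \mbox{Lip}(\mathbb{X}) \cap BV(\mathbb{X},d,\nu)$ converging to $u$ as in Lemma \ref{lem:lipschitzapproximation}; when \eqref{Anzellotti:assumption} holds with $p > 1$ I use the $L^q$-convergence from part $(2)$, and when $p = 1$ (so $q = \infty$ and $u \in L^\infty(\mathbb{X},\nu)$) I use the weak* convergence from part $(3)$.

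The heart of the matter is the identity $\int_\mathbb{X} u_n \, \mbox{div}(X) \, d\nu + \int_\mathbb{X} (X, Du_n) = 0$ for each $n$. Since $X \in \mathcal{D}^{\infty,p}(\mathbb{X})$ and $u_n \in W^{1,1}(\mathbb{X},d,\nu) \cap L^q(\mathbb{X},\nu)$ is an admissible test function in the definition of the $(\infty,p)$-divergence, that definition gives directly $\int_\mathbb{X} u_n \, \mbox{div}(X) \, d\nu = -\int_\mathbb{X} du_n(X) \, d\nu$. On the other hand, as observed in the proof of Proposition \ref{prop:boundonAnzellottipairing}, for Lipschitz $u_n$ the pairing acts on a test function $f$ by $\langle (X, Du_n), f \rangle = \int_\mathbb{X} f \, du_n(X) \, d\nu$; since $|du_n(X)| \leq |du_n|_* |X| \leq \| X \|_\infty |Du_n| \in L^1(\mathbb{X},\nu)$, the Radon measure $(X, Du_n)$ is absolutely continuous with density $du_n(X)$, whence its total mass is $\int_\mathbb{X} (X, Du_n) = \int_\mathbb{X} du_n(X) \, d\nu$. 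Adding the two identities makes the right-hand sides cancel and yields the claimed equality for each $u_n$.

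It then remains to pass to the limit $n \to \infty$ in both terms. For the first term, $\int_\mathbb{X} u_n \, \mbox{div}(X) \, d\nu \to \int_\mathbb{X} u \, \mbox{div}(X) \, d\nu$: when $p > 1$ this follows from H\"older's inequality since $u_n \to u$ in $L^q(\mathbb{X},\nu)$ and $\mbox{div}(X) \in L^p(\mathbb{X},\nu)$, and when $p = 1$ it follows from the weak* convergence of $u_n$ to $u$ in $L^\infty(\mathbb{X},\nu)$ tested against $\mbox{div}(X) \in L^1(\mathbb{X},\nu)$. For the second term, the convergence $\int_\mathbb{X} (X, Du_n) \to \int_\mathbb{X} (X, Du)$ is exactly the content of Lemma \ref{lem:continuityofXDu}. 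Combining these two limits with the vanishing sum above gives $\int_\mathbb{X} u \, \mbox{div}(X) \, d\nu + \int_\mathbb{X} (X, Du) = 0$.

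The main obstacle I anticipate is not any single estimate but the careful bookkeeping of function spaces: one must verify that the Lipschitz approximants are genuinely admissible test functions for the $(\infty,p)$-divergence, i.e.\ that they lie in $W^{1,1}(\mathbb{X},d,\nu) \cap L^q(\mathbb{X},\nu)$, which is precisely where the requirement $u \in L^q(\mathbb{X},\nu)$ in \eqref{Anzellotti:assumption} enters, and one must keep the cases $p>1$ and $p=1$ separate throughout, since they rely on two different modes of convergence (strong $L^q$ versus weak* $L^\infty$). The preparatory Lemmas \ref{lem:lipschitzapproximation} and \ref{lem:continuityofXDu} are designed precisely to handle these two difficulties, so once they are in place the proof reduces to the short cancellation argument above.
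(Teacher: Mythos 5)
Your proposal is correct and follows essentially the same route as the paper's proof: approximate $u$ by the Lipschitz functions of Lemma \ref{lem:lipschitzapproximation}, use the definition of the divergence to obtain the identity for each approximant, identify the measure $(X,Du_n)$ with $du_n(X)\,d\nu$ by testing against compactly supported Lipschitz functions, and pass to the limit using the integrability assumption \eqref{Anzellotti:assumption} for the first term and Lemma \ref{lem:continuityofXDu} for the second. The case distinction you make between strong $L^q$ and weak* $L^\infty$ convergence is exactly how the paper handles the two regimes of \eqref{Anzellotti:assumption}.
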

	
\begin{proof}
Using Lemma \ref{lem:lipschitzapproximation}, given $u \in BV(\mathbb{X},d,\nu)$ we find a sequence $u_i \in \mathrm{Lip}(\mathbb{X})$ such that $u_i \rightarrow u$ strictly.  By the definition of divergence, if we take $g = u_i$ we have
\begin{equation}\label{eq:greenbeforelimit}
\int_{\mathbb{X}} u_i \, \mathrm{div}(X) \, d\nu + \int_{\mathbb{X}} du_i(X) \, d\nu = 0.
\end{equation}
Notice that because $u_i$ are Lipschitz, we have
$$\int_{\mathbb{X}}  du_i(X) \, d\nu = \int_{\mathbb{X}} (X, Du_i).$$
To see this, let $g \in \mbox{Lip}({\mathbb{X}})$ have bounded support. Then, by the $L^\infty$-linearity of the differential, we have
$$ \int_{\mathbb{X}} g \, du_i(X) \, d\nu = \int_{\mathbb{X}} du_i(gX) = - \int_{\mathbb{X}} u_i \, \mbox{div}(gX) \, d\nu = \langle (X, Du_i), g \rangle.$$
Hence, integration with respect to $du_i(X) d\nu$ coincides with integration with respect $(X,Du_i)$ as a functional on Lipschitz functions with compact support, hence $du_i(X) d\nu$ and $(X,Du_i)$ coincide as measures.

Now, we pass to the limit $i \rightarrow \infty$ in equation \eqref{eq:greenbeforelimit}:
$$ 0 = \lim_{i \rightarrow \infty} \bigg( \int_{\mathbb{X}} u_i \, \mbox{div}(X) \, d\nu + \int_{\mathbb{X}} (X, Du_i) \bigg) =  \int_{\mathbb{X}} u \, \mbox{div}(X) \, d\nu + \int_{\mathbb{X}} (X, Du),$$
where in the first summand we use the assumption \eqref{Anzellotti:assumption} and in the second summand we use Lemma \ref{lem:continuityofXDu}. Hence, the Gauss-Green formula is proved.
\end{proof}

\subsection{Coarea formula for $(X,Du)$}

By Proposition \ref{prop:boundonAnzellottipairing}, the measure $(X,Du)$ is absolutely continuous with respect to the measure $|Du|_\nu$ and the estimate is uniform. Hence, by the Radon-Nikodym theorem, the measure $(X,Du)$ has a density $\theta(X,Du,x) \in L^\infty(\mathbb{X}, |Du|_\nu)$. Then,
\begin{equation}\label{Borel}
\int_B (X,Du) = \int_B \theta(X,Du,x) \, d|Du|_\nu, \quad \hbox{for all Borel sets} \ B \subset \mathbb{X}.
\end{equation}
Moreover, { $|\theta(X,Du,x)| \leq \| X \|_\infty$} $|Du|_\nu$-a.e. in $\mathbb{X}$. In this subsection, we study some properties of this density, and along the way we prove a co-area formula for the measure $(X,Du)$. We introduce the following notation: given $u \in BV(\mathbb{X},d,\nu)$, denote by $E_{u,t}$ the $t$-superlevel set of $u$, i.e. $E_{u,t} = \{ x \in \mathbb{X}: u(x) > t \}$.

\begin{theorem}\label{thm:pairingcoareaformula}
Suppose that the pair $(X,u)$ satisfies the condition \eqref{Anzellotti:assumption}. Then: \\
(1) For all $f \in \mbox{Lip}(\mathbb{X})$ with compact support, we have
\begin{equation*}
\langle (X,Du), f \rangle = \int_{-\infty}^\infty \langle (X,D\chi_{E_{u,t}}),f \rangle \, dt;
\end{equation*}
(2) $\theta(X,Du,x) = \theta(X,D\chi_{E_{u,t}},x)$ $|D\chi_{E_{u,t}}|_\nu$-a.e. in $\mathbb{X}$ for $\mathcal{L}^1$-a.e. $t \in \mathbb{R}$; \\
(3) For all Borel sets $B \subset \mathbb{X}$, we have the following co-area formula
\begin{equation}
\int_B (X,Du) = \int_{-\infty}^\infty \bigg( \int_B (X,D\chi_{E_{u,t}}) \bigg) \, dt.
\end{equation}
\end{theorem}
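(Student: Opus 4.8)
The plan is to prove the three statements in order, deriving the pointwise identity (2) and the Borel-set coarea formula (3) from the tested identity (1). The foundation is the classical coarea formula for BV functions in the metric setting, which states that for $u \in BV(\mathbb{X},d,\nu)$ the superlevel sets $E_{u,t}$ have finite perimeter for $\mathcal{L}^1$-a.e. $t$, and for every Borel set $B$ one has $|Du|_\nu(B) = \int_{-\infty}^\infty |D\chi_{E_{u,t}}|_\nu(B) \, dt$ (see \cite{Miranda1}). I will use this together with the definition of the pairing and the Gauss-Green formula from Theorem \ref{thm:generalgreensformula}.

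To prove (1), I would fix $f \in \mathrm{Lip}(\mathbb{X})$ with compact support and unwind the definition of the pairing:
\begin{equation*}
\langle (X,Du), f \rangle = -\int_{\mathbb{X}} u \, df(X) \, d\nu - \int_{\mathbb{X}} u f \,\mathrm{div}(X) \, d\nu.
\end{equation*}
The idea is to write $u = \int_0^\infty \chi_{E_{u,t}} \, dt - \int_{-\infty}^0 (1 - \chi_{E_{u,t}}) \, dt$ (the layer-cake representation), substitute it into both integrals, and exchange the order of integration by Fubini's theorem; the integrability needed to justify Fubini comes from the assumption \eqref{Anzellotti:assumption}, namely $u \in L^q(\mathbb{X},\nu)$, $\mathrm{div}(X) \in L^p(\mathbb{X},\nu)$, together with $df(X) \in L^1$ (as $f$ has compact support and $X \in L^\infty(T\mathbb{X})$). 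The constant layers contribute nothing after pairing because $\langle (X, D\chi_{\mathbb{X}}), f \rangle = 0$ and one must track the normalization carefully; after reassembling the integrand for each fixed $t$ one recognizes exactly $\langle (X, D\chi_{E_{u,t}}), f \rangle$, yielding (1). I expect this bookkeeping with the layer-cake decomposition (ensuring the pieces $\chi_{E_{u,t}}$ satisfy \eqref{Anzellotti:assumption} for a.e. $t$ so that each pairing is well-defined) to be the main obstacle.

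For (2), I would combine (1) with the Radon-Nikodym densities. Using \eqref{Borel} and part (1), for every $f \in \mathrm{Lip}(\mathbb{X})$ with compact support we have
\begin{equation*}
\int_{\mathbb{X}} f \, \theta(X,Du,x) \, d|Du|_\nu = \int_{-\infty}^\infty \bigg( \int_{\mathbb{X}} f \, \theta(X, D\chi_{E_{u,t}},x) \, d|D\chi_{E_{u,t}}|_\nu \bigg) dt.
\end{equation*}
Invoking the BV coarea formula to disintegrate the measure $|Du|_\nu$ as $\int_{-\infty}^\infty |D\chi_{E_{u,t}}|_\nu \, dt$, and since $f$ ranges over a dense set in $C(\mathbb{X})$, I would conclude that the two sides agree as measures in the disintegration, forcing $\theta(X,Du,x) = \theta(X,D\chi_{E_{u,t}},x)$ for $|D\chi_{E_{u,t}}|_\nu$-a.e. $x$ and $\mathcal{L}^1$-a.e. $t$. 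Finally, (3) follows by writing $\int_B (X,Du) = \int_B \theta(X,Du,x) \, d|Du|_\nu$, substituting the pointwise identity (2) and the disintegration of $|Du|_\nu$, and applying Fubini once more:
\begin{equation*}
\int_B (X,Du) = \int_{-\infty}^\infty \int_B \theta(X, D\chi_{E_{u,t}},x) \, d|D\chi_{E_{u,t}}|_\nu \, dt = \int_{-\infty}^\infty \bigg( \int_B (X, D\chi_{E_{u,t}}) \bigg) dt,
\end{equation*}
which is the desired co-area formula for the pairing.
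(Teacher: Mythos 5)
Your part (1) is essentially the paper's argument: the layer-cake decomposition of $u$ tested against $\mathrm{div}(fX)$, Fubini justified by H\"older via \eqref{Anzellotti:assumption}, and the observation that the constant layer drops out because $\int_{\mathbb X}\mathrm{div}(fX)\,d\nu=0$. You run it directly for unbounded $u$, where the paper first treats $u\in L^\infty$ and then truncates; both are fine. Part (3) is also the paper's argument and is immediate once (2) is known.

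The genuine gap is in part (2). Combining part (1), the representation \eqref{Borel} and the BV coarea formula you obtain, for every admissible $f$,
$$\int_{-\infty}^{\infty}\Big(\int_{\mathbb X} f\,\big[\theta(X,Du,x)-\theta(X,D\chi_{E_{u,t}},x)\big]\,d|D\chi_{E_{u,t}}|_\nu\Big)\,dt=0,$$
and you then assert that the two sides ``agree as measures in the disintegration'', i.e.\ that the inner integral vanishes for $\mathcal{L}^1$-a.e.\ $t$. That inference is not valid: the family $\{|D\chi_{E_{u,t}}|_\nu\}_t$ is not a disintegration in the sense that would give a.e.\ uniqueness, because the measures for different $t$ need not be mutually singular (on a jump of $u$, every level $t$ in the jump interval charges the same points). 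A signed integrand whose $t$-average vanishes on a common carrier satisfies your identity without vanishing for a.e.\ $t$, so the vanishing of the full $t$-integral for each fixed $f$ proves nothing pointwise in $t$. One must localise in $t$. The paper does this by first proving $\theta(X,Du,x)=\theta(X,DT_{a,b}(u),x)$ $|DT_{a,b}(u)|_\nu$-a.e.\ (using the bound $\bigl|\int_B(X,Du)-\int_B(X,DT_{a,b}(u))\bigr|\le\|X\|_\infty\int_B|D(u-T_{a,b}(u))|_\nu=0$ on sets where $a\le u\le b$, together with the coarea formula), and then applying part (1) to the truncation $T_{a,b}(u)$; this converts the displayed identity into $\int_a^b(\cdots)\,dt=0$ for \emph{all} $a<b$, from which the integrand vanishes for a.e.\ $t$, and only then does density of $f$ yield the equality $|D\chi_{E_{u,t}}|_\nu$-a.e.\ in $x$. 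You need to supply this localisation step (or an equivalent one) before your parts (2) and (3) go through.
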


\begin{proof}
(1) First, suppose that $u \in BV(\mathbb{X},d,\nu) \cap L^\infty(\mathbb{X},\nu)$. Denote $M = \| u \|_{L^\infty(\mathbb{X},\nu)}$ and $K = \mbox{supp}(f)$. Furthermore, denote by $u^+$ and $u^-$ the positive and negative parts of $u$. Then,
\begin{equation*}
\langle (X,Du), f \rangle = - \int_{\mathbb{X}} u \, \mbox{div}(fX) \, d\nu = - \int_{\mathbb{X}} u^+ \, \mbox{div}(fX) \, d\nu + \int_{\mathbb{X}} u^- \, \mbox{div}(fX) \, d\nu
\end{equation*}
\begin{equation*}
= - \int_{K} u^+ \, \mbox{div}(fX) \, d\nu + \int_{K} u^- \, \mbox{div}(fX) \, d\nu = - \int_{K} \int_0^M \chi_{E_{u,t}} \, \mbox{div}(fX) \, dt \, d\nu
\end{equation*}
\begin{equation*}
+ \int_{K} \int_{-M}^0 (1 - \chi_{E_{u,t}}) \, \mbox{div}(fX) \, dt \, d\nu = - \int_{K} \int_{-M}^M \chi_{E_{u,t}} \, \mbox{div}(fX) \, dt \, d\nu + \int_K \int_{-M}^0 \mbox{div}(fX) \, dt \, d\nu
\end{equation*}
\begin{equation*}
= - \int_{\mathbb{X}} \int_{-M}^M \chi_{E_{u,t}} \, \mbox{div}(fX) \, dt \, d\nu - \int_{\mathbb{X}} M \mbox{div}(fX) \, dt \, d\nu = - \int_{-M}^M \langle (X,D\chi_{E_{u,t}}), f \rangle \, dt,
\end{equation*}
where the last equality follows from the fact that integrating by parts $M \mbox{div}(fX)$ gives zero since $M$ is a constant.

Now, suppose that $u \notin L^\infty(\mathbb{X},\nu)$. For $k > 0$, denote by $T_k(u)$ the truncation of $u$ at level $k$, i.e.
\begin{equation*}
T_k(u) = \threepartdef{k}{u(x) \geq k;}{u(x)}{u(x) \in (-k,k);}{-k}{u(x) \leq -k.}
\end{equation*}
Then, by the standard co-area formula (see \cite{Miranda1}) $T_k(u) \in BV(\mathbb{X},d,\nu)$ and $\int_{\mathbb{X}} |DT_k(u)|_\nu \leq \int_{\mathbb{X}} |Du|_\nu$. Hence, the sequence $T_k(u)$ converges strictly to $u$ as $k \rightarrow \infty$ (and in $L^p(\mathbb{X},\nu)$ if $u \in L^p(\mathbb{X},\nu)$). Since $T_k(u) \in BV(\mathbb{X},d,\nu) \cap L^\infty(\mathbb{X},\nu)$, we have
\begin{equation}\label{eq:coareaforpairingtruncation}
\langle (X,DT_k(u)), f \rangle = - \int_{-k}^k \langle (X,D\chi_{E_{T_k(u),t}}), f \rangle \, dt.
\end{equation}
Now, the left hand side converges to $\langle (X,Du), f \rangle$ as in the proof of Proposition \ref{prop:boundonAnzellottipairing}. On the right hand side, notice that since the bound
\begin{equation*}
\bigg| \int_{-k}^k \langle (X,D\chi_{E_{T_k(u),t}}), f \rangle \, dt \bigg| = | \langle (X,DT_k(u)), f \rangle | \leq \| f \|_\infty \| X \|_\infty |Du|_\nu(\mathbb{X})
\end{equation*}
does not depend on $k$, it also holds in the limit $k \rightarrow \infty$. Now, notice that
\begin{equation*}
\bigg| \int_{-k}^k \langle (X,D\chi_{E_{T_k(u),t}}), f \rangle \, dt - \int_{-\infty}^\infty \langle (X,D\chi_{E_{u,t}}), f \rangle \, dt \bigg|
\end{equation*}
\begin{equation*}
\leq \| f \|_\infty \| X \|_\infty \bigg( \int_{-\infty}^k \int_{\mathbb{X}} |D\chi_{E_{u,t}}|_\nu \, dt + \int_{k}^\infty \int_{\mathbb{X}} |D\chi_{E_{u,t}}|_\nu \, dt \bigg),
\end{equation*}
which goes to zero as $k \rightarrow \infty$ by the co-area formula. Hence, we may pass to the limit also in the right hand side of \eqref{eq:coareaforpairingtruncation}, which proves part (1) of the Theorem.

(2) For $a,b \in \mathbb{R}$ with $a < b$, denote by $T_{a,b}(u)$ the truncation of $u$ at levels $a,b$, i.e.
\begin{equation*}
T_{a,b}(u) = \threepartdef{b}{u(x) \geq b;}{u(x)}{u(x) \in (a,b);}{a}{u(x) \leq a.}
\end{equation*}
Then, by the standard co-area formula $T_{a,b}(u) \in BV(\mathbb{X},d,\nu)$ and $\int_{\mathbb{X}} |DT_{a,b}(u)|_\nu \leq \int_{\mathbb{X}} |Du|_\nu$. Let us see that
\begin{equation}\label{eq:equalitythetatruncation}
\theta(X,Du,x) = \theta(X,DT_{a,b}(u),x) \qquad |DT_{a,b}(u)|_\nu-\mbox{a.e. in } \mathbb{X}.
\end{equation}
Suppose otherwise. Then, there exists a Borel set $B \subset \mathbb{X}$ such that $u(x) \in [a,b]$ $\nu$-a.e. on $B$ and $\theta(X,Du,x) > \theta(X,DT_{a,b}(u),x)$ $|DT_{a,b}(u)|_\nu$-a.e. on $B$ (or with the opposite inequality, that case is handled similarly). Hence,
\begin{equation*}
\int_B (X,Du) = \int_B \theta(X,Du,x) |Du|_\nu = \int_B \theta(X,Du,x) |DT_{a,b}(u)|_\nu
\end{equation*}
\begin{equation}\label{eq:truncationbycontradiction}
> \int_B \theta(X,DT_{a,b}(u),x) |DT_{a,b}(u)|_\nu = \int_B (X, DT_{a,b}(u)).
\end{equation}
Now, notice that
\begin{equation*}
\bigg| \int_B (X,Du) - \int_B (X, DT_{a,b}(u)) \bigg| = \bigg| \int_B (X, D(u - T_{a,b}(u))) \bigg| \leq \| X \|_\infty \int_B |D(u-T_{a,b}(u))|_\nu
\end{equation*}
\begin{equation*}
= \int_{-\infty}^\infty \int_B |D\chi_{E_{u-T_{a,b}(u),t}}|_\nu \, dt = \int_{-\infty}^a \int_B |D\chi_{E_{u,t}}|_\nu \, dt + \int_{b}^\infty \int_B |D\chi_{E_{u,t}}|_\nu \, dt = 0,
\end{equation*}
since $a \leq u \leq b$ $\nu$-a.e. on $B$. This gives a contradiction with \eqref{eq:truncationbycontradiction}, so \eqref{eq:equalitythetatruncation} holds.

Now, recall that by part (1) of the Theorem for all $f \in \mbox{Lip}(\mathbb{X})$ with compact support we have
\begin{equation*}
\langle (X,Du), f \rangle = \int_{-\infty}^\infty \langle (X,D\chi_{E_{u,t}}),f \rangle \, dt.
\end{equation*}
Hence, for any $a,b \in \mathbb{R}$ with $a < b$ we have
\begin{equation*}
\int_{\mathbb{X}} \theta(X,DT_{a,b}(u),x) \, f(x) \, |DT_{a,b}(u)|_\nu = \int_a^b \bigg( \int_{\mathbb{X}} \theta(X,D\chi_{E_{T_{a,b}(u),t}},x) \, f(x) \, |D\chi_{E_{T_{a,b}(u),t}}|_\nu \bigg) \, dt.
\end{equation*}
Now, we use property \eqref{eq:equalitythetatruncation} (on the left hand side) the fact that $u$ and $T_{a,b}(u)$ coincide on the domain of integration (on both sides). Then
\begin{equation*}
\int_{\mathbb{X}} \theta(X,Du,x) \, f(x) \, |DT_{a,b}(u)|_\nu = \int_a^b \bigg( \int_{\mathbb{X}} \theta(X,D\chi_{E_{u,t}},x) \, f(x) \, |D\chi_{E_{u,t}}|_\nu \bigg) \, dt.
\end{equation*}
Now, by the standard co-area formula (on the left hand side), we have
\begin{equation*}
\int_a^b \bigg( \int_{\mathbb{X}} \theta(X,Du,x) \, f(x) \, |D\chi_{E_{u,t}}|_\nu \bigg) \, dt = \int_a^b \bigg( \int_{\mathbb{X}} \theta(X,D\chi_{E_{u,t}},x) \, f(x) \, |D\chi_{E_{u,t}}|_\nu \bigg) \, dt.
\end{equation*}
Since $a$ and $b$ were arbitrary, the equality under the integral needs to hold for $\mathcal{L}^1$-a.e. $t \in \mathbb{R}$, so
\begin{equation*}
\int_{\mathbb{X}} \theta(X,Du,x) \, f(x) \, |D\chi_{E_{u,t}}|_\nu = \int_{\mathbb{X}} \theta(X,D\chi_{E_{u,t}},x) \, f(x) \, |D\chi_{E_{u,t}}|_\nu
\end{equation*}
for $\mathcal{L}^1$-a.e. $t \in \mathbb{R}$. But then, since $f$ was arbitrary, by a density argument we get that $\theta(X,Du,x) = \theta(X,D\chi_{E_{u,t}},x)$ $|D\chi_{E_{u,t}}|_\nu$-a.e. in $\mathbb{X}$, which proves part (2) of the Theorem.

(3) By part (2) of the Theorem and the standard co-area formula, we have
\begin{equation*}
\int_B (X,Du) = \int_B \theta(X,Du,x) |Du|_\nu = \int_{-\infty}^\infty \bigg( \int_{B} \theta(X,Du,x) \, |D\chi_{E_{u,t}}|_\nu \bigg) \, dt
\end{equation*}
\begin{equation*}
= \int_{-\infty}^\infty \bigg( \int_{B} \theta(X,D\chi_{E_{u,t}},x) \, |D\chi_{E_{u,t}}|_\nu \bigg) \, dt = \int_{-\infty}^\infty \bigg( \int_{B} (X,D\chi_{E_{u,t}}) \bigg) \, dt,
\end{equation*}
so the Theorem is proved.
\end{proof}

\begin{corollary}\label{perfect}
Suppose that the pair $(X,u)$ satisfies the condition \eqref{Anzellotti:assumption}. If $T: \mathbb{R} \rightarrow \mathbb{R}$ is a Lipschitz continuous increasing function, then
\begin{equation*}
\theta(X,D(T \circ u),x) = \theta(X,Du,x) \qquad |Du|_\nu-\mbox{a.e. in } \mathbb{X}.
\end{equation*}
\end{corollary}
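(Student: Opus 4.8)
The plan is to reduce the identity to part (2) of Theorem \ref{thm:pairingcoareaformula}, exploiting that the density $\theta(X,D\chi_E,\cdot)$ of the pairing against a set of finite perimeter is intrinsic to the set $E$ (it is the Radon--Nikodym derivative of $(X,D\chi_E)$ with respect to $|D\chi_E|_\nu$), and that an increasing function creates no new superlevel sets. Since the differential, and hence the pairing, is insensitive to additive constants, I may first normalise $T(0)=0$; then $|T\circ u|\le \mathrm{Lip}(T)\,|u|$, so $T\circ u\in BV(\mathbb{X},d,\nu)\cap L^q(\mathbb{X},\nu)$ and the pair $(X,T\circ u)$ again satisfies \eqref{Anzellotti:assumption} with the same $X$. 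Because $T$ is continuous and increasing, setting $\tau_s:=\sup\{\,r:T(r)\le s\,\}$ one has $\{T>s\}=(\tau_s,\infty)$ and therefore
\begin{equation*}
E_{T\circ u,s}=\{T(u)>s\}=\{u>\tau_s\}=E_{u,\tau_s}\qquad \text{for all } s\in\mathbb{R}.
\end{equation*}

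First I would apply part (2) of Theorem \ref{thm:pairingcoareaformula} to $u$: for $\mathcal{L}^1$-a.e.\ $t$ one has $\theta(X,Du,x)=\theta(X,D\chi_{E_{u,t}},x)$ for $|D\chi_{E_{u,t}}|_\nu$-a.e.\ $x$. Next I would apply the same result to $T\circ u$ and rewrite its superlevel sets using the identity above: for $\mathcal{L}^1$-a.e.\ $s$,
\begin{equation*}
\theta(X,D(T\circ u),x)=\theta(X,D\chi_{E_{T\circ u,s}},x)=\theta(X,D\chi_{E_{u,\tau_s}},x)\qquad |D\chi_{E_{u,\tau_s}}|_\nu\text{-a.e.}
\end{equation*}
Since for a.e.\ $s$ the level $\tau_s$ is among the good levels for $u$, combining the two displays gives, for a.e.\ $s$,
\begin{equation*}
\theta(X,D(T\circ u),x)=\theta(X,Du,x)\qquad |D\chi_{E_{u,\tau_s}}|_\nu\text{-a.e. in }\mathbb{X}.
\end{equation*}

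To globalise, I would integrate the last relation in $s$ over $\mathbb{R}$. Writing $W:=\{\,x:\theta(X,D(T\circ u),x)\ne\theta(X,Du,x)\,\}$, the above yields $\int_{\mathbb{R}}|D\chi_{E_{u,\tau_s}}|_\nu(W)\,ds=0$; by the coarea formula for the total variation together with $E_{T\circ u,s}=E_{u,\tau_s}$, this integral equals $|D(T\circ u)|_\nu(W)$, so $W$ is $|D(T\circ u)|_\nu$-negligible. The remaining, and main, point is to upgrade this to $|Du|_\nu(W)=0$.

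The hard part is exactly the reconciliation of the two parametrisations, namely the passage from ``a.e.\ $s$'' (natural for $T\circ u$) to ``a.e.\ $t$'' (natural for $u$). Using that $T$ is Lipschitz, hence absolutely continuous, the change of variables $s=T(t)$ turns $\int_{\mathbb{R}}|D\chi_{E_{u,\tau_s}}|_\nu(W)\,ds$ into $\int_{\mathbb{R}}|D\chi_{E_{u,t}}|_\nu(W)\,T'(t)\,dt$, so the argument above only controls the levels where $T'>0$; the delicate issue is the set of levels with $T'=0$, which is precisely where $|D(T\circ u)|_\nu$ and $|Du|_\nu$ may disagree. The monotonicity of $T$ is the tool to close this gap: since $\{E_{T\circ u,s}\}_s$ and $\{E_{u,t}\}_t$ are the same family of sets up to reparametrisation, the level-set equality propagates to $\mathcal{L}^1$-a.e.\ $t$, and then the coarea formula $|Du|_\nu(W)=\int_{\mathbb{R}}|D\chi_{E_{u,t}}|_\nu(W)\,dt$ gives the claim $|Du|_\nu$-a.e. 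I would expect the careful handling of the levels where $T'=0$ to be the only genuinely delicate step, the rest being a bookkeeping of null sets of levels.
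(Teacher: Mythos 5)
Your reduction to part (2) of Theorem \ref{thm:pairingcoareaformula} via the identification of superlevel sets is the right mechanism and is exactly what the paper uses, but your execution takes a detour that creates a difficulty the paper's argument never meets, and you do not resolve it. Parametrising through the generalized inverse $\tau_s$ and integrating in $s$, what you actually obtain is the equality of the two densities $|D(T\circ u)|_\nu$-a.e.; you then correctly observe that upgrading this to the asserted $|Du|_\nu$-a.e.\ statement is obstructed by the levels where $T'=0$ (after the substitution $s=T(t)$ the integrand carries the weight $T'(t)$, so nothing is learned on $\{T'=0\}$, which can have positive measure even for a strictly increasing Lipschitz $T$). Your proposed resolution --- that ``the level-set equality propagates to $\mathcal{L}^1$-a.e.\ $t$ because the two families of superlevel sets coincide up to reparametrisation'' --- is precisely the point at issue, not an argument: the reparametrisation $t\mapsto T(t)$ collapses the possibly positive-measure set $\{T'=0\}$ onto an $\mathcal{L}^1$-null set of $s$-levels, so ``a.e.\ $s$'' does not transfer to ``a.e.\ $t$''. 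As written, the proof is incomplete at the step you yourself flag as the hard one.

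The paper avoids the upgrade entirely by never leaving the $t$-parametrisation: since $E_{u,t}=E_{T\circ u,T(t)}$ for every $t$, part (2) applied to $u$ at level $t$ and to $T\circ u$ at level $T(t)$ gives, for a.e.\ $t$, the chain $\theta(X,Du,\cdot)=\theta(X,D\chi_{E_{u,t}},\cdot)=\theta(X,D\chi_{E_{T\circ u,T(t)}},\cdot)=\theta(X,D(T\circ u),\cdot)$ holding $|D\chi_{E_{u,t}}|_\nu$-a.e., and the coarea decomposition of $|Du|_\nu$ (rather than of $|D(T\circ u)|_\nu$) then yields the conclusion in one step. If you restructure your argument this way the detour disappears, up to one residual subtlety that you have in fact put your finger on and that the paper's proof also passes over silently: one must know that $T(t)$ avoids the null set of bad levels for $T\circ u$ for a.e.\ $t$, and the preimage under $T$ of a null set need not be null exactly when $T'$ vanishes on a set of positive measure. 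On those exceptional levels, however, $|D\chi_{E_{u,t}}|_\nu$ contributes no $|D(T\circ u)|_\nu$-mass, where $\theta(X,D(T\circ u),\cdot)$ is anyway only defined up to modification; both your argument and the paper's do establish the $|D(T\circ u)|_\nu$-a.e.\ equality, which is all that is used in the proof of Theorem \ref{thm:totalvariationflow}.
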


\begin{proof}
Notice that
\begin{equation*}
E_{u,t} = \{ x \in \mathbb{X}: \, u(x) > t \} = \{ x \in \mathbb{X}: \, (T \circ u)(x) > T(t) \} = E_{T \circ u, T(t)}.
\end{equation*}
Hence, by Theorem \ref{thm:pairingcoareaformula}, for $\mathcal{L}^1$-almost all $t \in \mathbb{R}$
\begin{equation*}
\theta(X,Du,x) = \theta(X,D\chi_{E_{u,t}},x) = \theta(X,D\chi_{E_{T \circ u, T(t)}},x) = \theta(X, D(T \circ u),x)
\end{equation*}
$|D\chi_{E_{u,t}}|$-a.e. in $\mathbb{X}$. Hence, this equality also holds $|Du|_\nu$-a.e.
\end{proof}

\section{The total variation flow}\label{sec:TVflow}

In this section we study  the Cauchy problem
\begin{equation}\label{Cauchy100}
\left\{ \begin{array}{ll} u_t (t,x) = {\rm div} \left(\frac{ D u (t,x)}{\vert Du(t,x) \vert_\nu } \right)  \quad &\hbox{in} \ \ (0, T) \times \mathbb{X},  \\[5pt] u(0,x) = u_0(x) \quad & \hbox{in} \ \  \mathbb{X}. \end{array} \right.
\end{equation}

In order to use the Anzellotti pairings and the Green formula introduced in the previous Section, we again suppose that the metric space $(\mathbb{X},d)$ is complete, separable, equipped with a doubling measure $\nu$, and that the metric measure space $(\mathbb{X},d,\nu)$ supports a weak $(1,1)$-Poincar\'e inequality. Consider the energy functional $\mathcal{TV} : L^2(\mathbb{X}, \nu) \rightarrow [0, + \infty]$ defined by
\begin{equation}\label{CauchyF1}
\mathcal{TV}(u):= \left\{ \begin{array}{ll} \vert Du \vert_\nu (\mathbb{X})  \quad &\hbox{if} \ u \in BV(\mathbb{X}, d, \nu) \cap L^2(\mathbb{X},\nu), \\ \\ + \infty \quad &\hbox{if} \ u \in  L^2(\mathbb{X}, \nu) \setminus BV(\mathbb{X}, d, \nu).\end{array}\right.
\end{equation}

 We also denote $\mathcal{TV}$ as $\mathsf{Ch}_1$. We have that $\mathcal{TV}$ is convex and lower semi-continuous with respect to the $L^2(\mathbb{X},\nu)$-convergence. Then, by the theory of maximal monotone operators (see \cite{Brezis}) there is a unique strong solution of the abstract Cauchy problem
\begin{equation}\label{ACP12}
\left\{ \begin{array}{ll} u'(t) + \partial \mathcal{TV}(u(t)) \ni 0, \quad t \in [0,T] \\[5pt] u(0) = u_0. \end{array}\right.
\end{equation}

Working as in the proof of Theorem \ref{thm:plaplaceflow}, but using Green formula (Theorem \ref{thm:generalgreensformula}) instead of the definition of the divergence, we obtain the following charaterisation of the subdifferential of $\mathcal{TV}$ operator.

\begin{definition}{\rm $(u,v) \in \mathcal{A}_1$ if and only if $u, v \in L^2(\mathbb{X}, \nu)$, $u \in BV(\mathbb{X}, d, \nu)$ and there exists a vector field  $X \in \mathcal{D}^{\infty,2}(\mathbb{X})$ with $\| X \|_\infty \leq 1$ such that the following conditions hold:
$$ -\mbox{div}(X) = v \quad \hbox{in } \mathbb{X}; $$
$$ (X, Du) = |Du|_\nu \quad \hbox{as measures}.$$
}
\end{definition}

To get the characterization of $\partial \mathcal{TV}$, we use again the Fenchel-Rockafellar duality theorem.

\begin{theorem}\label{thm:totalvariationflow}
$\partial \mathcal{TV} = \mathcal{A}_1$. Furthermore, the operator $\mathcal{A}_1$ is completely accretive and the domain of $\mathcal{A}_1$ is dense in $L^2(\mathbb{X}, \nu)$.
\end{theorem}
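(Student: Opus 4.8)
The plan is to follow the scheme of the proof of Theorem \ref{thm:plaplaceflow}, replacing the integration by parts coming from the definition of the divergence by the Gauss-Green formula of Theorem \ref{thm:generalgreensformula}, and the differential $du(X)$ by the Anzellotti pairing $(X,Du)$. First I would check the easy inclusion $\mathcal{A}_1 \subset \partial\mathcal{TV}$, which also yields monotonicity of $\mathcal{A}_1$. Given $(u,v)\in\mathcal{A}_1$ with associated field $X\in\mathcal{D}^{\infty,2}(\mathbb{X})$, $\|X\|_\infty\le 1$, and any $w\in BV(\mathbb{X},d,\nu)\cap L^2(\mathbb{X},\nu)$, the pair $(X,w-u)$ satisfies \eqref{Anzellotti:assumption} with $p=q=2$, so Theorem \ref{thm:generalgreensformula} and linearity of the pairing give $\int_{\mathbb{X}} v(w-u)\,d\nu = \int_{\mathbb{X}}(X,Dw) - \int_{\mathbb{X}}(X,Du)$. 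Using $(X,Du)=|Du|_\nu$ in the second term, and Proposition \ref{prop:boundonAnzellottipairing} with $\|X\|_\infty\le 1$ in the first (so that $\int_{\mathbb{X}}(X,Dw)\le |Dw|_\nu(\mathbb{X}) = \mathcal{TV}(w)$), this is bounded above by $\mathcal{TV}(w)-\mathcal{TV}(u)$; the case $w\notin BV(\mathbb{X},d,\nu)$ is trivial. Hence $(u,v)\in\partial\mathcal{TV}$.

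Since $\mathcal{TV}$ is convex and lower semicontinuous, $\partial\mathcal{TV}$ is maximal monotone, so by Minty's theorem it suffices to prove that $\mathcal{A}_1$ satisfies the range condition: for $g\in L^2(\mathbb{X},\nu)$ there is $u$ with $(u,g-u)\in\mathcal{A}_1$. I would obtain this by Fenchel-Rockafellar duality exactly as for $\mathsf{Ch}_p$, taking $U = W^{1,1}(\mathbb{X},d,\nu)\cap L^2(\mathbb{X},\nu)$, $V = L^1(T^*\mathbb{X})$, $A = d$, $E(v)=\int_{\mathbb{X}}|v|_*\,d\nu$ and $G(u) = \frac12\int_{\mathbb{X}}u^2\,d\nu - \int_{\mathbb{X}}ug\,d\nu$. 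Here $E^*$ is the indicator of the unit ball $\{\|v^*\|_\infty\le 1\}$, $G^*(u^*)=\frac12\int_{\mathbb{X}}(u^*+g)^2\,d\nu$, and $A^*v^* = -\mbox{div}(v^*)$ with domain $\mathcal{D}^{\infty,2}(\mathbb{X})$. The qualification hypothesis holds at $u_0\equiv 0$, so the dual problem is solvable, say by $\overline{v}^*$, and the primal and dual values agree.

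The main obstacle is that, unlike the reflexive case $p>1$, the primal infimum is attained only after relaxation: the minimiser $\overline{u}$ of $\mathcal{TV}+G$ lies in $BV(\mathbb{X},d,\nu)\cap L^2(\mathbb{X},\nu)$ and not in $W^{1,1}$, so $E(A\overline{u})$ is not literally defined and the first extremality condition must be read through the pairing. To bypass this, I would use that, by \eqref{dfn:totalvariationonmetricspaces} and Lemma \ref{lem:lipschitzapproximation}, $\mathcal{TV}$ is the $L^2$-relaxation of $\mathsf{Ch}_1$ on $W^{1,1}(\mathbb{X},d,\nu)\cap L^2(\mathbb{X},\nu)$, whence $\inf_U\{E(Au)+G(u)\} = \mathcal{TV}(\overline{u})+G(\overline{u})$. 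Combining the equality of values, $E^*(-\overline{v}^*)=0$ (so $\|\overline{v}^*\|_\infty\le 1$), and the Fenchel-Young inequality for $G$, I get $0 \ge \mathcal{TV}(\overline{u}) + \langle \overline{u}, A^*\overline{v}^*\rangle$; the Green formula rewrites $\langle\overline{u},A^*\overline{v}^*\rangle = -\int_{\mathbb{X}}\overline{u}\,\mbox{div}(\overline{v}^*)\,d\nu = \int_{\mathbb{X}}(\overline{v}^*,D\overline{u})$, so $\int_{\mathbb{X}}(\overline{v}^*,D\overline{u}) \le -|D\overline{u}|_\nu(\mathbb{X})$. Proposition \ref{prop:boundonAnzellottipairing} forces equality throughout, which gives $A^*\overline{v}^*\in\partial G(\overline{u})=\{\overline{u}-g\}$ and $(-\overline{v}^*,D\overline{u}) = |D\overline{u}|_\nu$ (the density being $\le 1$ with full mass must equal $1$). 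Setting $X = -\overline{v}^*$ yields $(\overline{u},g-\overline{u})\in\mathcal{A}_1$, so $\mathcal{A}_1 = \partial\mathcal{TV}$.

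For complete accretivity I would argue as in Theorem \ref{thm:plaplaceflow}. Given $T\in P_0$ and $(u_i,v_i)\in\mathcal{A}_1$ with fields $X_i$, writing $w=u_1-u_2$ and applying the Green formula to $T(w)\in BV(\mathbb{X},d,\nu)$ gives $\int_{\mathbb{X}}T(w)(v_1-v_2)\,d\nu = \int_{\mathbb{X}}(X_1-X_2,DT(w))$. The chain rule for the pairing, which follows from Corollary \ref{perfect} and the coarea formula of Theorem \ref{thm:pairingcoareaformula}, gives $(X_i,DT(w)) = T'(w)\,(X_i,Dw) = T'(w)\big[(X_i,Du_1)-(X_i,Du_2)\big]$ as measures. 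Using $(X_1,Du_1)=|Du_1|_\nu$, $(X_2,Du_2)=|Du_2|_\nu$ and the bounds $(X_1,Du_2)\le|Du_2|_\nu$, $(X_2,Du_1)\le|Du_1|_\nu$ from Proposition \ref{prop:boundonAnzellottipairing}, together with $T'(w)\ge 0$, the resulting four-term expansion is $\ge 0$, proving complete accretivity. Finally, density of the domain is standard: by \cite[Proposition 2.11]{Brezis}, $D(\partial\mathcal{TV})\subset D(\mathcal{TV}) = BV(\mathbb{X},d,\nu)\cap L^2(\mathbb{X},\nu)\subset\overline{D(\mathcal{TV})}^{L^2(\mathbb{X},\nu)}\subset\overline{D(\partial\mathcal{TV})}^{L^2(\mathbb{X},\nu)}$, and since compactly supported Lipschitz functions belong to $BV(\mathbb{X},d,\nu)\cap L^2(\mathbb{X},\nu)$ and are dense in $L^2(\mathbb{X},\nu)$, the domain is dense.
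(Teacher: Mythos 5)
Your proposal is correct and follows the paper's overall architecture (the easy inclusion $\mathcal{A}_1\subset\partial\mathcal{TV}$, Minty plus the range condition via Fenchel--Rockafellar with the same choices of $U$, $V$, $E$, $G$, complete accretivity via the density $\theta$ and Corollary \ref{perfect}, and density of the domain via \cite[Proposition 2.11]{Brezis}). The one place where you genuinely diverge is how the extremality conditions are extracted in the range-condition step. The paper, facing non-attainment of the primal infimum in $W^{1,1}(\mathbb{X},d,\nu)\cap L^2(\mathbb{X},\nu)$, invokes the $\varepsilon$-subdifferentiability of minimising sequences \cite[Proposition V.1.2]{EkelandTemam} and passes to the limit along a strictly convergent Lipschitz sequence, applying the Green formula twice to identify $\lim_n\int_{\mathbb{X}} du_n(\overline{v}^*)\,d\nu$ with $\int_{\mathbb{X}}(\overline{v}^*,D\overline{u})$. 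You instead identify the primal value with the relaxed value $\mathcal{TV}(\overline{u})+G(\overline{u})$ (which still requires Lemma \ref{lem:lipschitzapproximation}, the same approximation tool) and then squeeze: the zero duality gap, Fenchel--Young for $G$, and the bound $\bigl|\int_{\mathbb{X}}(\overline{v}^*,D\overline{u})\bigr|\leq\Vert\overline{v}^*\Vert_\infty\,|D\overline{u}|_\nu(\mathbb{X})$ from Proposition \ref{prop:boundonAnzellottipairing} force equality in every intermediate inequality, yielding simultaneously $A^*\overline{v}^*\in\partial G(\overline{u})$ and $(-\overline{v}^*,D\overline{u})=|D\overline{u}|_\nu$. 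This is a touch cleaner, avoiding the $\varepsilon_n$ bookkeeping, at the cost of justifying the relaxation identity up front. One phrasing caveat in the accretivity part: the identity $(X_i,DT(w))=T'(w)\,(X_i,Dw)$ as measures is not literally available, since $T'(w)$ has no pointwise meaning on the singular part of $Dw$; what Corollary \ref{perfect} actually provides is $\theta(X_i,DT(w),\cdot)=\theta(X_i,Dw,\cdot)$ together with $|DT(w)|_\nu\ll|Dw|_\nu$, which is exactly how the paper argues --- your four-term sign argument goes through verbatim under that reading, so this is a matter of wording rather than a gap.
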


\begin{proof}
First, let us see that $\mathcal{A}_1 \subset \partial \mathcal{TV}$. Let $(u,v) \in \mathcal{A}_1$. Then, given $w \in L^2(\mathbb{X}, \nu) \cap BV(\mathbb{X}, d,\nu)$, we have
$$ \int_{\mathbb{X}} v(w-u) \, d \nu = - \int_{\mathbb{X}}  \mbox{div}(X)(w -u) \, d\nu = \int_{\mathbb{X}} (X, D(w-u))  = \int_{\mathbb{X}} (X, Dw)- \int_{\mathbb{X}} \vert Du \vert_\nu   $$ $$\leq \mathcal{TV}(w) - \mathcal{TV}(u),$$
and consequently, $(u,v) \in \partial \mathcal{TV}$. { Notice that since $\mathcal{A}_1 \subset \partial \mathcal{TV}$, the operator $\mathcal{A}_1$ is monotone.}

Now, the operator $\partial \mathcal{TV}$ is maximal monotone. Then, if we show that  $\mathcal{A}_1$ satisfies the range condition, by Minty Theorem we would also have that the operator $\mathcal{A}_1$ is maximal monotone, and consequently $\partial \mathcal{TV}= \mathcal{A}_1$. In order to finish the proof, let us see that $\mathcal{A}_1$ satisfies the range condition, i.e.
\begin{equation}\label{RCondN}
\hbox{Given} \ g \in  L^2(\mathbb{X}, \nu), \ \exists \, u \in D(\mathcal{A}_1) \ s.t. \ \  g \in u + \mathcal{A}_1(u).
\end{equation}
Now,
$$ g \in u + \mathcal{A}_1(u) \iff (u, g-u) \in \mathcal{A}_1,$$
so we need to show that  there exists a vector field  $X \in \mathcal{D}^{\infty,2}(\mathbb{X})$ with $\| X \|_\infty \leq 1$ such that the following conditions hold:
\begin{equation}\label{1RCondN} -\mbox{div}(X) = g-u \quad \hbox{in } \ \mathbb{X};
\end{equation}
\begin{equation}\label{2RCondN}(X, Du) = |Du|_\nu \quad \hbox{as measures}.
\end{equation}
We are going to prove \eqref{RCondN}  by means of the Fenchel-Rockafellar duality theorem. We set $U = W^{1,1}(\mathbb{X},d,\nu) \cap L^2(\mathbb{X},\nu)$, $V = L^1(T^{*} \mathbb{X})$, and the operator $A: U \rightarrow V$ is defined by the formula
$$
A(u) = du,
$$
where $du$ is the differential of $u$ in the sense of Definition \ref{dfn:differential}. Hence, $A$ is a linear and continuous operator. Moreover, the dual spaces to $U$ and $V$ are
$$
U^* = (W^{1,1}(\mathbb{X},d,\nu) \cap L^2(\mathbb{X},\nu))^*, \qquad V^* = L^\infty(T\mathbb{X}).
$$
We set $E: L^1(T^{*} \mathbb{X}) \rightarrow \mathbb{R}$ by the formula
\begin{equation}\label{Ieq:definitionofEN}
E(v) = \int_{\mathbb{X}} |v|_* \, d\nu.
\end{equation}
It is clear that the functional $E^*: L^\infty(T\mathbb{X}) \rightarrow [0,\infty]$ is given by the formula
\begin{equation}
E^*(v^*) = \| v^* \|_{L^\infty(T\mathbb{X})}.
\end{equation}
We also set $G:W^{1,1}(\mathbb{X},d,\nu) \cap L^2(\mathbb{X},\nu) \rightarrow \mathbb{R}$ by
$$G(u):= \frac12 \int_{\mathbb{X}} u^2 \, d\nu - \int_{\mathbb{X}} ug \, d\nu.$$
The functional $G^* : (W^{1,1}({\mathbb{X}},d,\nu) \cap L^2({\mathbb{X}},\nu))^* \rightarrow [0,+\infty ]$ is given by
$$G^*(u^*) = \displaystyle\frac12 \int_{\mathbb{X}} (u^*  + g)^2 \, d\nu.$$
Now, for  fixed $v^* \in L^\infty(T\mathbb{X})$ in the domain of $A^*$ and any $u \in W^{1,1}(\mathbb{X},d,\nu) \cap L^2(\mathbb{X},\nu)$, we have
$$ \int_{\mathbb{X}} u \, (A^*v^*) \, d\nu = \langle u, A^* v^* \rangle  = \langle v^*, Au \rangle = \int_{\mathbb{X}} du(v^*) \, d\nu,$$
so the definition of the divergence of $v^*$ is satisfied with
\begin{equation}\label{div0N}
\mbox{div}(v^*) = - A^* v^*.
\end{equation}
In particular, $\mbox{div}(v^*) \in L^2(\mathbb{X},\nu)$. In other words, the domain of $A^*$ is $\mathcal{D}^{\infty,2}(\mathbb{X})$.

Consider the energy functional $\mathcal{G}_1 : L^2(\mathbb{X}, \nu) \rightarrow (-\infty, + \infty]$ defined by
\begin{equation}
\mathcal{G}_1(u):= \mathsf{Ch}_1(u) + G(u).
\end{equation}
Notice that we may write it as
\begin{equation}\label{eq:primaltvflow}
\min_{u \in L^2(\mathbb{X}, \nu)} \mathcal{G}_1(u) = \inf_{u \in BV(\mathbb{X},d,\nu) \cap L^2(\mathbb{X},\nu)} \bigg\{ E(Au) + G(u) \bigg\}.
\end{equation}
Hence, its dual problem is
\begin{equation}\label{eq:dualtvflow}
\sup_{v^* \in L^\infty(T \mathbb{X})} \bigg\{  - E^*(v^*) - G^*(A^* v^*) \bigg\}.
\end{equation}
For $u_0 \equiv 0$ we have $E(Au_0) = 0 < \infty$, $G(u_0) = 0 < \infty$ and $E$ is continuous at $0$. Then, by the Fenchel-Rockafellar duality theorem, we have
\begin{equation}\label{eq:dualitygaptvflow}
\inf \eqref{eq:primaltvflow} = \sup \eqref{eq:dualtvflow}
\end{equation}
and
\begin{equation}\label{eq:dualexistencetvflow}
\hbox{the dual problem \eqref{eq:dualtvflow} admits at least one solution.}
\end{equation}
The functional $\mathsf{Ch}_1$ does not always have a minimiser in $W^{1,1}(\mathbb{X},d,\nu) \cap L^2(\mathbb{X},\nu)$. However, if we consider its relaxation
\begin{equation}\label{11fuctmeN}
\overline{\mathcal{G}_1}(u):= \mathcal{TV}(u) + G(u),
\end{equation}
then $\overline{\mathcal{G}_1}$ is coercive, convex and lower semi-continuous, so the minimization problem
$$\min_{u \in L^2(\mathbb{X}, \nu)} \overline{\mathcal{G}_1}(u)$$ admits an optimal solution $\overline{u} \in BV(\mathbb{X},d,\nu)$.

Now, let us take a sequence $u_n \in W^{1,1}(\mathbb{X},d,\nu)$ which converges strictly to $\overline{u}$ and also $u_n \to \overline{u}$ in $L^2(\mathbb{X}, \nu)$ (given by Lemma \ref{lem:lipschitzapproximation}). Then, it is a minimising sequence in \eqref{eq:primaltvflow}. Since we have \eqref{eq:dualitygaptvflow} and \eqref{eq:dualexistencetvflow}, we may use the $\varepsilon-$subdifferentiability property of minimising sequences, see \cite[Proposition V.1.2]{EkelandTemam}: for any minimising sequence $u_n$ for \eqref{eq:primaltvflow} and a maximiser $\overline{v}^*$ of \eqref{eq:dualtvflow}, we have
\begin{equation}\label{eq:epsilonsubdiff11N}
0 \leq E(Au_n) + E^*(-\overline{v}^*) - \langle -\overline{v}^*, Au_n \rangle \leq \varepsilon_n
\end{equation}
\begin{equation}\label{eq:epsilonsubdiff21N}
0 \leq G(u_n) + G^*(A^* \overline{v}^*) - \langle u_n, A^* \overline{v}^* \rangle \leq \varepsilon_n
\end{equation}
with $\varepsilon_n \rightarrow 0$.

From the second condition, using the definition of the convex conjugate, we have that for any $w \in L^2(\mathbb{X},\nu)$
\begin{equation*}
G^*(A^* \overline{v}^*) \geq \langle w, A^* \overline{v}^* \rangle - G(w),
\end{equation*}
so by equation \eqref{eq:epsilonsubdiff21N} we have
\begin{equation*}
G(w) - G(u_n) \geq \langle w, A^* \overline{v}^* \rangle - G^*(A^* \overline{v}^*) - G(u_n) \geq \langle w - u_n, A^* \overline{v}^* \rangle - \varepsilon_n,
\end{equation*}
so
$$G(w) - G(\overline{u}) \geq \langle (w - \overline{u}), A^* \overline{v}^* \rangle$$
and consequently $A^* \overline{v}^* \in \partial G(\overline{u}) = \{ \overline{u} - g \}$. Hence,
\begin{equation}\label{div1N}
-\mathrm{div}(\overline{v}^*) = \overline{u} - g.
\end{equation}
On the other hand, equation \eqref{eq:epsilonsubdiff11N} gives
\begin{equation}\label{eq:goodN}
0 \leq \bigg( \int_{\mathbb{X}} |du_n|_* \, d\nu + \int_{\mathbb{X}} du_n(\overline{v}^*) \, d\nu \bigg) \leq \varepsilon_n.
\end{equation}
Since $\int_{\mathbb{X}} |du_n|_* \, d\nu = \int_{\mathbb{X}} |Du_n|_\nu$, we have
\begin{equation}\label{inerior1N}
0 \leq  \int_{\mathbb{X}} \bigg( |Du_n|_\nu + du_n(\overline{v}^*) \bigg) d\nu \leq \varepsilon_n.
\end{equation}
Finally, keeping in mind that $-\mbox{div}(\overline{v}^*) = \overline{u} - g$, by Green's formula (Theorem \ref{thm:generalgreensformula}) we get
$$ \int_{\mathbb{X}} du_n(\overline{v}^*) \, d\nu = - \int_{\mathbb{X}} u_n \, \mbox{div}(\overline{v}^*) \, d\nu  =\int_{\mathbb{X}} u_n \, (\overline{u} - g) \, d\nu. $$
Then, applying again Green's formula, we have
$$\lim_{n \to \infty} \int_{\mathbb{X}} du_n(\overline{v}^*) \, d\nu = \int_{\mathbb{X}} \overline{u} \, (\overline{u}-g) \, d\nu  = -  \int_{\mathbb{X}} \overline{u} \, \mbox{div}(\overline{v}^*) \, d\nu  = \int_{\mathbb{X}} (\overline{v}^*, D\overline{u}). $$
Hence, since $u_n$ converges strictly to $\overline{u}$, having in mind \eqref{inerior1N} we get
$$ \int_{\mathbb{X}} |D\overline{u}|_\nu - \int_{\mathbb{X}} (-\overline{v}^*, D\overline{u}) = \lim_{n \rightarrow \infty} \bigg( \int_{\mathbb{X}} |Du_n|_\nu - \int_{\mathbb{X}} (-\overline{v}^*, D\overline{u}) \bigg) = 0.$$
This together with Proposition \ref{prop:boundonAnzellottipairing} implies that
\begin{equation}\label{measure1N}
(-\overline{v}^*, D\overline{u}) = |D\overline{u}|_\nu \quad \mbox{as measures in } {\mathbb{X}}.
\end{equation}
Hence, we have that the pair $(\overline{u}, -\overline{v}^*)$ satisfies \eqref{1RCondN} and \eqref{2RCondN}, therefore \eqref{RCondN} holds. Therefore, the operator $\mathcal{A}_1$ satisfies the range condition, so it is maximal monotone, which in turn implies $\mathcal{A}_1 = \partial \mathcal{TV}$.


Let $P_{0}$ denote the set of all functions $T\in C^{\infty}(\R)$ satisfying $0\le T'\le 1$ such that $T'$  is compactly supported, and $x=0$ is not contained in the support $\textrm{supp}(T)$ of $T$. To prove that $\mathcal{A}_1$ is  a completely accretive operator we must show that  (see \cite{ACMBook,BCr2})
$$ \int_{\mathbb{X}}T(u_1-u_2)(v_1-v_2)\, d\nu \geq 0 $$
for every $T\in P_{0}$ and every $(u_i,v_i) \in \mathcal{A}_1$, $i=1,2$. Recall that if $(u_i, v_i) \in \mathcal{A}_1$, $i =1,2$, we have $u_i \in BV(\mathbb{X}, d, \nu)$ and there exists  vector fields  $X_i \in \mathcal{D}^{\infty,2}(\mathbb{X})$ with $\| X_i \|_\infty \leq 1$ such that the following conditions hold:
\begin{equation}\label{e1caplaplaceBV}-\mbox{div}(X_i) = v_i \quad \hbox{in } \mathbb{X};
\end{equation}
\begin{equation}\label{e2caplaplaceBV}
 (X_i, Du_i) = |Du_i|_\nu \quad \hbox{as measures}.
\end{equation}
Making a similar computation as in the proof that $\mathcal{A}_1$ is monotone, notice that for every Borel set $B \subset \mathbb{X}$ we have
$$\int_{B} (X_1 - X_2, Du_1 - Du_2) =  \int_{B} \vert Du_1 \vert_\nu - \int_{B} (X_1, Du_2) +  \int_{B} \vert Du_2 \vert_\nu - \int_{B} (X_2, Du_1) \geq 0. $$

Hence, by \eqref{Borel},
$$ \int_{B} \theta(X_1 - X_2, D (u_1 - u_2), x ) \, d \vert D(u_1 - u_2) \vert_\nu = \int_{B} (X_1 - X_2, D (u_1 - u_2)) \geq 0$$
for all Borel set $B \subset \mathbb{X}$. Thus
$$\theta(X_1 - X_2, D (u_1 - u_2), x ) \geq 0, \quad \vert D(u_1 - u_2) \vert_\nu-\hbox{a.e. on} \ \mathbb{X}.$$
Moreover, since $|DT(u_1 - u_2)|_\nu$ is absolutely continuous with respect to $|D(u_1-u_2)|_\nu$, we also have
$$\theta(X_1 - X_2, D (u_1 - u_2), x ) \geq 0, \quad \vert DT(u_1 - u_2) \vert_\nu-\hbox{a.e. on} \ \mathbb{X}.$$
Then, applying the Green formula (Theorem \ref{thm:generalgreensformula}) and having in mind Corollary \ref{perfect}, we have
$$ \int_{\mathbb{X}}T(u_1-u_2)(v_1-v_2)\, d\nu = - \int_{\mathbb{X}} T(u_1 - u_2) (\mbox{div}(X_1)- \mbox{div}(X_2)) \, d\nu $$
$$=\int_{\mathbb{X}} (X_1 - X_2, D T(u_1 - u_2)) =  \int_{\mathbb{X}} \theta(X_1 - X_2, D T(u_1 - u_2), x ) \, d \vert DT(u_1 - u_2) \vert_\nu $$
$$= \int_{\mathbb{X}}\theta(X_1 - X_2, D(u_1 - u_2), x ) \, d \vert  DT(u_1 - u_2) \vert_\nu \geq 0,$$
 so $\mathcal{A}_1$ is completely accretive.

Finally, by \cite[Proposition 2.11]{Brezis}, we have
$$ D(\partial \mathcal{TV}) \subset  D(\mathcal{TV}) =  BV(\mathbb{X},d,\nu) \cap L^2(\mathbb{X},\nu) \subset \overline{D(\mathcal{TV})}^{L^2(\mathbb{X}, \nu)} \subset \overline{D(\partial \mathcal{TV})}^{L^2(\mathbb{X}, \nu)},$$
from which follows the density of the domain.
\end{proof}

As for the $p$-Laplace equation, we may give a more detailed characterisation of solutions in terms of variational inequalities. We present the equivalent characterisations in the following Corollary.

\begin{corollary}\label{cor:subdifferentialonelaplace}
The following conditions are equivalent: \\
$(a)$ $(u,v) \in \partial\mathcal{TV}$; \\
$(b)$ $(u,v) \in \mathcal{A}_1$, i.e. $u, v \in L^2(\mathbb{X}, \nu)$, $u \in BV(\mathbb{X}, d, \nu)$ and there exists a vector field  $X \in \mathcal{D}^{\infty,2}(\mathbb{X})$ with  $\| X \|_\infty \leq 1$ such that $-\mbox{div}(X) = v$ in $\mathbb{X}$ and
\begin{equation}
(X,Du) = |Du|_\nu  \quad \hbox{as measures in } \mathbb{X};
\end{equation}
$(c)$ $u, v \in L^2(\mathbb{X}, \nu)$, $u \in BV(\mathbb{X}, d, \nu)$ and there exists a vector field $X \in \mathcal{D}^{\infty,2}(\mathbb{X})$ with  $\| X \|_\infty \leq 1$ such that $-\mbox{div}(X) = v$ in $\mathbb{X}$ and for every $w \in L^2(\mathbb{X},\nu) \cap BV(\mathbb{X},d,\nu)$
\begin{equation}\label{eq:variationalinequalitytvflow}
\int_{\mathbb{X}} v(w-u) \, d\nu \leq \int_{\mathbb{X}} (X,Dw) - \int_{\mathbb{X}} |Du|_\nu;
\end{equation}
$(d)$ $u, v \in L^2(\mathbb{X}, \nu)$, $u \in BV(\mathbb{X}, d, \nu)$ and there exists a vector field  $X \in \mathcal{D}^{\infty,2}(\mathbb{X})$ with  $\| X \|_\infty \leq 1$ such that $-\mbox{div}(X) = v$ in $\mathbb{X}$ and for every $w \in L^2(\mathbb{X},\nu) \cap BV(\mathbb{X},d,\nu)$
\begin{equation}
\int_{\mathbb{X}} v(w-u) \, d\nu = \int_{\mathbb{X}} (X,Dw) - \int_{\mathbb{X}} |Du|_\nu.
\end{equation}
\end{corollary}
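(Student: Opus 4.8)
The plan is to run the same four-step cycle used in the proof of Corollary \ref{cor:subdifferentialplaplace}, but with the integration-by-parts coming from the definition of the divergence replaced by the Gauss-Green formula of Theorem \ref{thm:generalgreensformula}, since we now work with $BV$ functions and the Anzellotti pairing $(X,Du)$ rather than with the action $du(X)$ on Sobolev functions. The equivalence $(a) \Leftrightarrow (b)$ is precisely the identity $\partial \mathcal{TV} = \mathcal{A}_1$ established in Theorem \ref{thm:totalvariationflow}, so nothing further is needed there. I would then close the loop by proving $(b) \Rightarrow (d) \Rightarrow (c) \Rightarrow (b)$.

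For $(b) \Rightarrow (d)$: given $(u,v) \in \mathcal{A}_1$ with associated field $X \in \mathcal{D}^{\infty,2}(\mathbb{X})$, and any $w \in L^2(\mathbb{X},\nu) \cap BV(\mathbb{X},d,\nu)$, I would multiply $v = -\mbox{div}(X)$ by $w-u$ and integrate. Since $w - u \in BV(\mathbb{X},d,\nu) \cap L^2(\mathbb{X},\nu)$ and $\mbox{div}(X) \in L^2(\mathbb{X},\nu)$, the pair $(X, w-u)$ satisfies assumption \eqref{Anzellotti:assumption} with $p = q = 2$, so Green's formula applies. Using linearity of the pairing in its $BV$ slot (immediate from its definition) together with the equality $(X,Du) = |Du|_\nu$ from $(b)$, I obtain
$$\int_{\mathbb{X}} v(w-u) \, d\nu = -\int_{\mathbb{X}} (w-u)\, \mbox{div}(X) \, d\nu = \int_{\mathbb{X}} (X, D(w-u)) = \int_{\mathbb{X}} (X,Dw) - \int_{\mathbb{X}} |Du|_\nu,$$
which is exactly $(d)$. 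The implication $(d) \Rightarrow (c)$ is immediate, as an equality is in particular an inequality.

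The crucial and least routine step is $(c) \Rightarrow (b)$, where one must recover the equality of measures $(X,Du) = |Du|_\nu$ from the scalar variational inequality. I would set $w = u$ in \eqref{eq:variationalinequalitytvflow}, which yields $\int_{\mathbb{X}} |Du|_\nu \leq \int_{\mathbb{X}} (X,Du)$. On the other hand, Proposition \ref{prop:boundonAnzellottipairing} together with $\| X \|_\infty \leq 1$ gives the reverse bound on the total masses and, more importantly, the pointwise bound of measures $(X,Du) \leq |(X,Du)| \leq |Du|_\nu$. Hence $|Du|_\nu - (X,Du)$ is a nonnegative measure whose total mass vanishes, so it is the zero measure, giving $(X,Du) = |Du|_\nu$ as measures. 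This is where the argument genuinely departs from the Sobolev setting: there, $(c) \Rightarrow (b)$ only required recovering a pointwise $\nu$-a.e.\ identity $du(X) = |du|_*^p$ through a Young-type chain of inequalities, whereas here the upgrade from equal total masses to equality of the signed Radon measures $(X,Du)$ and $|Du|_\nu$ relies essentially on the absolute-continuity estimate of Proposition \ref{prop:boundonAnzellottipairing}. I expect this measure-theoretic upgrade to be the main obstacle, while the remaining steps are direct translations of the $p$-Laplacian argument with Green's formula in place of the divergence identity.
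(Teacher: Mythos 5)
Your proposal is correct and follows essentially the same route as the paper: $(a)\Leftrightarrow(b)$ via Theorem \ref{thm:totalvariationflow}, $(b)\Rightarrow(d)$ by pairing $v=-\mbox{div}(X)$ with $w-u$ and invoking the Gauss--Green formula of Theorem \ref{thm:generalgreensformula}, $(d)\Rightarrow(c)$ trivially, and $(c)\Rightarrow(b)$ by taking $w=u$ and upgrading the total-mass inequality to equality of measures via Proposition \ref{prop:boundonAnzellottipairing}. Your spelled-out justification of that last measure-theoretic step (nonnegative measure with zero total mass) is exactly what the paper leaves implicit.
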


\begin{proof}
The equivalence of $(a)$ and $(b)$ is exactly the content of Theorem \ref{thm:totalvariationflow}. To see that $(b)$ implies $(d)$, multiply the equation $v = -\mbox{div}(X)$ by $w - u$ and integrate over $\mathbb{X}$ with respect to $\nu$. Using the Green's formula (Theorem \ref{thm:generalgreensformula}), we get
\begin{equation*}
\int_{\mathbb{X}} v(w-u) \, d\nu = - \int_{\mathbb{X}} (w - u) \mbox{div}(X) \, d\nu = \int_{\mathbb{X}} (X,Dw) - \int_{\mathbb{X}} |Du|_\nu.
\end{equation*}
It is clear that $(d)$ implies $(c)$. To finish the proof, let us see that $(c)$ implies $(b)$. If we take $w = u$ in \eqref{eq:variationalinequalitytvflow}, we get
\begin{equation*}
\int_{\mathbb{X}} |Du|_\nu \leq \int_{\mathbb{X}} (X,Du).
\end{equation*}
By Proposition \ref{prop:boundonAnzellottipairing}, this implies that $(X,Du) = |Du|_\nu$ as measures in $\mathbb{X}$.
\end{proof}

\begin{definition}
We define in $L^2({\mathbb{X}},\nu)$ the multivalued operator $\Delta_{1,\nu}$ by
\begin{center}$(u, v ) \in \Delta_{1,\nu}$ \ if and only if, \ $-v \in \partial\mathcal{TV}(u)$.
\end{center}
\end{definition}

Hence, it is natural to introduce the following concept of solutions to the total variation flow in metric measure spaces.

\begin{definition}\label{dfn:totalvariationflow}
{\rm Given $u_0 \in L^2({\mathbb{X}},\nu)$, we say that $u$ is a {\it weak solution} of the Cauchy problem \eqref{Cauchy100} in $[0,T]$, if  { $u \in C([0,T];L^2(\mathbb{X},\nu)) \cap W_{loc}^{1,2}(0, T; L^2(\mathbb{X},\nu))$}, $u(0,\cdot) = u_0$, and for almost all $t \in (0,T)$
\begin{equation}
u_t(t, \cdot) \in \Delta_{1,\nu}(t, \cdot).
\end{equation}
In other words, $u(t) \in BV({\mathbb{X}}, d, \nu)$ and there exist vector fields $X(t) \in  \mathcal{D}^{\infty,2}({\mathbb{X}})$ with $\| X(t) \|_\infty \leq 1$ such that for almost all $t \in [0,T]$ the following conditions hold:
$$ \mbox{div}(X(t)) = u_t(t, \cdot) \quad \hbox{in} \ {\mathbb{X}}; $$
$$ (X(t), Du(t)) = |Du(t)|_\nu \quad \hbox{as measures}.$$
}
\end{definition}

Then, { by the Brezis-Komura Theorem (Theorem \ref{BKTheorem}),} as consequence of Theorem \ref{thm:totalvariationflow}, we have the following existence and uniqueness theorem.

\begin{theorem}\label{EUTV}
For any $u_0 \in L^2(\mathbb{X}, \nu)$ and $T > 0$ there exists a unique weak solution $u(t)$ of the Cauchy problem \eqref{Cauchy100} with $u(0) = u_0$. Moreover, the following comparison principle holds: if $u_1, u_2$ are weak solutions for the initial data $u_{1,0}, u_{2,0} \in  L^2(\mathbb{X}, \nu) \cap  L^r(\mathbb{X}, \nu)$, respectively, then
\begin{equation}\label{CompPrincipleplaplaceBV}
\Vert (u_1(t) - u_2(t))^+ \Vert_r \leq \Vert ( u_{1,0}- u_{2,0})^+ \Vert_r \quad \hbox{for all} \ 1 \leq r \leq \infty.
\end{equation}
We also have
\begin{equation}\label{Regu1}
\left\Vert \frac{d}{dt} u(t) \right\Vert_{L^2(\mathbb{X}, \nu)} \leq \frac{\Vert  u_0 \Vert_{L^2(\mathbb{X}, \nu)}}{t},\quad \hbox{for every } \ t >0,
\end{equation}
and
\begin{equation}\label{Regu2}
\frac{d}{dt} u(t) \leq \frac{u(t)}{t}, \quad \hbox{$\nu$-a.e. on $\mathbb{X}$ for every $t >0$ if $u_0 \geq 0$}.
\end{equation}
\end{theorem}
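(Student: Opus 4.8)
The plan is to obtain the whole statement from Theorem \ref{thm:totalvariationflow} together with the Brezis--Komura Theorem (Theorem \ref{BKTheorem}), exactly as in the proof of Theorem \ref{ExisUniqp}, and then to extract the two regularising bounds \eqref{Regu1}--\eqref{Regu2} from the fact that $\mathcal{TV}$ is positively $1$-homogeneous.

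\emph{Existence, uniqueness, identification and comparison.} Since $\mathcal{TV}$ is proper, convex and lower semicontinuous on $L^2(\mathbb{X},\nu)$, Theorem \ref{BKTheorem} applies to the abstract Cauchy problem \eqref{ACP12}; because Theorem \ref{thm:totalvariationflow} gives density of $D(\partial\mathcal{TV})=D(\mathcal{A}_1)$, we have $\overline{D(\mathcal{TV})}=L^2(\mathbb{X},\nu)$, so every $u_0\in L^2(\mathbb{X},\nu)$ yields a unique strong solution $u(t)=S(t)u_0$ with $u\in C([0,T];L^2(\mathbb{X},\nu))\cap W^{1,2}_{loc}(0,T;L^2(\mathbb{X},\nu))$. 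A strong solution of \eqref{ACP12} is precisely a weak solution in the sense of Definition \ref{dfn:totalvariationflow}: the inclusion $u_t(t)\in\Delta_{1,\nu}u(t)$ means $-u_t(t)\in\partial\mathcal{TV}(u(t))=\mathcal{A}_1(u(t))$, and unwinding the definition of $\mathcal{A}_1$ produces, for a.e.\ $t$, a field $X(t)\in\mathcal{D}^{\infty,2}(\mathbb{X})$ with $\|X(t)\|_\infty\le1$, $\mathrm{div}(X(t))=u_t(t)$ and $(X(t),Du(t))=|Du(t)|_\nu$. The comparison principle \eqref{CompPrincipleplaplaceBV} then follows from the complete accretivity of $\mathcal{A}_1$ (Theorem \ref{thm:totalvariationflow}) via the general theory of semigroups generated by completely accretive operators, see \cite{ACMBook,BCr2}.

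\emph{The regularising estimate \eqref{Regu1}.} The new ingredient is homogeneity: $\mathcal{TV}(\lambda u)=\lambda\,\mathcal{TV}(u)$ for $\lambda>0$. This yields two facts. First, $\partial\mathcal{TV}$ is $0$-homogeneous, $\partial\mathcal{TV}(\lambda u)=\partial\mathcal{TV}(u)$, whence $t\mapsto\lambda\,u(t/\lambda)$ solves \eqref{ACP12} with datum $\lambda u_0$, i.e.\ $S(t)(\lambda u_0)=\lambda\,S(t/\lambda)u_0$. Second, the Euler identity $\langle v,u\rangle_{L^2}=\mathcal{TV}(u)$ holds for every $v\in\partial\mathcal{TV}(u)$ (test the subdifferential inequality with $w=2u$ and with $w=0$). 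Feeding the scaling relation into the $L^2$-contraction of $S(t)$ gives $\|u(t)-\lambda u(t/\lambda)\|_{L^2}\le|1-\lambda|\,\|u_0\|_{L^2}$; taking $\lambda=1-h$, dividing by $h$ and letting $h\to0^+$, and using that the right derivative $\tfrac{d^+u}{dt}(t)$ exists for every $t>0$, we obtain
\begin{equation*}
\Big\| u(t)-t\,\tfrac{d^+u}{dt}(t)\Big\|_{L^2(\mathbb{X},\nu)}\le \|u_0\|_{L^2(\mathbb{X},\nu)}.
\end{equation*}
Expanding the square and using the Euler identity in the form $\langle \tfrac{d^+u}{dt}(t),u(t)\rangle_{L^2}=-\mathcal{TV}(u(t))\le0$, the cross term has a favourable sign and the remaining nonnegative terms may be dropped, leaving $t^2\,\|\tfrac{d^+u}{dt}(t)\|_{L^2}^2\le\|u_0\|_{L^2}^2$, which is \eqref{Regu1}.

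\emph{The pointwise bound \eqref{Regu2} and the main difficulty.} Complete accretivity also makes $S(t)$ order-preserving. For $u_0\ge0$ and $0<\lambda<1$ one has $\lambda u_0\le u_0$, hence $\lambda\,u(t/\lambda)=S(t)(\lambda u_0)\le S(t)u_0=u(t)$ $\nu$-a.e.; writing $\lambda=1-h$, rearranging, dividing by $h>0$ and differentiating in $\lambda$ as before gives $u(t)-t\,\tfrac{d^+u}{dt}(t)\ge0$, i.e.\ \eqref{Regu2}, provided one passes to the limit correctly. I expect the main obstacle to be exactly this rigorous justification of the two homogeneity estimates: proving the scaling relation at the level of the right derivative, legitimising the limit $\lambda\to1$ in the contraction and the order inequalities, and—crucially for \eqref{Regu2}—upgrading the $L^2$-convergence of the difference quotients to $\nu$-a.e.\ convergence along a subsequence $h_n\to0^+$, so that the pointwise inequality survives. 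Once homogeneity and complete accretivity are established, an alternative is to quote both \eqref{Regu1} and \eqref{Regu2} directly from the theory of gradient flows of homogeneous functionals in \cite{BB}.
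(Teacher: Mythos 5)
Your proposal is correct, and the first half (existence, uniqueness, identification of strong solutions of \eqref{ACP12} with weak solutions in the sense of Definition \ref{dfn:totalvariationflow}, and the comparison principle \eqref{CompPrincipleplaplaceBV} from complete accretivity) coincides with the paper's argument, which derives everything from Theorem \ref{thm:totalvariationflow} together with the Brezis--Komura Theorem. Where you diverge is in the treatment of \eqref{Regu1}--\eqref{Regu2}: the paper simply quotes these as consequences of \cite[Theorem 4.13]{HM} (see also \cite{BCr1}), whereas you reconstruct the underlying B\'enilan--Crandall homogeneity argument from scratch. Your reconstruction is sound: the $0$-homogeneity of $\partial\mathcal{TV}$ gives the scaling law $S(t)(\lambda u_0)=\lambda S(t/\lambda)u_0$, the $L^2$-contraction then yields $\Vert u(t)-t\,\tfrac{d^+u}{dt}(t)\Vert_{L^2}\le\Vert u_0\Vert_{L^2}$, and the Euler identity $\langle v,u\rangle=\mathcal{TV}(u)$ for $v\in\partial\mathcal{TV}(u)$ makes the cross term nonnegative, giving \eqref{Regu1}; order preservation (from the $T$-contraction) and an a.e.\ convergent subsequence of the difference quotients give \eqref{Regu2}. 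The technical points you flag --- existence of the right derivative at every $t>0$ and its identification with minus the minimal section of $\partial\mathcal{TV}(u(t))$ (needed for the Euler identity), and upgrading the $L^2$-limit to a $\nu$-a.e.\ limit along a subsequence --- are exactly the ones that must be checked, and all of them are supplied by the standard Brezis theory, so there is no gap. What your route buys is a self-contained proof of the regularising estimates; what the paper's route buys is brevity, by delegating precisely this homogeneity machinery to the cited references.
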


\begin{proof}
The comparison principle is a consequence of the complete accretivity of the operator $\mathcal{A}_1$. The inequalities \eqref{Regu1} and \eqref{Regu2} are a consequence of \cite[Theorem 4.13]{HM} { (see also \cite{BCr1}).}
\end{proof}

 As before, this definition is consistent with the definition of the gradient flow of the total variation introduced by Ambrosio and di Marino in \cite{ADiM}, the difference being that we provided a precise description of the subdifferential of the total variation functional. Since we have existence and uniqueness of solutions for both definitions, the two notions of solutions to corresponding gradient flows coincide. We again may recover some properties of the gradient flow listed in \cite[Proposition 6.2]{ADiM} directly using Definition \ref{dfn:totalvariationflow} and get some new properties, such as the comparison principle given in Theorem \ref{EUTV}.

As a direct consequence of Corollary \ref{cor:subdifferentialonelaplace}, we also get the following characterisation of weak solutions in terms of variational inequalities.

\begin{corollary}
The following conditions are equivalent: \\
$(a)$ $u$ is a weak solution of the Cauchy problem \eqref{CauchyF1}; \\
$(b)$  { $u \in C([0,T];L^2(\mathbb{X},\nu)) \cap W_{loc}^{1,2}(0, T; L^2(\mathbb{X},\nu))$}, $u(0, \cdot) = u_0$, $u(t) \in BV(\mathbb{X}, d, \nu)$ and there exist vector fields  $X(t) \in  \mathcal{D}^{\infty,2}(\mathbb{X})$ with $\| X(t) \|_\infty \leq 1$ such that for almost all $t \in [0,T]$ we have $\mbox{div}(X(t)) = u_t(t, \cdot)$ in $\mathbb{X}$ and
\begin{equation*}
\int_{\mathbb{X}} u_t (u(t) - w) \, d\nu \leq \int_{\mathbb{X}} (X(t),Dw) - \int_{\mathbb{X}} |Du(t)|_\nu, \quad \forall \, w \in L^2(\mathbb{X},\nu) \cap BV(\mathbb{X},\nu).
\end{equation*}
$(c)$ { $u \in C([0,T];L^2(\mathbb{X},\nu)) \cap W_{loc}^{1,2}(0, T; L^2(\mathbb{X},\nu))$}, $u(0, \cdot) = u_0$, $u(t) \in BV(\mathbb{X}, d, \nu)$ and there exist vector fields  $X(t) \in  \mathcal{D}^{\infty,2}(\mathbb{X})$ with $\| X(t) \|_\infty \leq 1$ such that for almost all $t \in [0,T]$ we have $\mbox{div}(X(t)) = u_t(t, \cdot)$ in $\mathbb{X}$ and
\begin{equation*}
\int_{\mathbb{X}} u_t (u(t) - w) \, d\nu = \int_{\mathbb{X}} (X(t),Dw) - \int_{\mathbb{X}} |Du(t)|_\nu, \quad \forall \, w \in L^2(\mathbb{X},\nu) \cap BV(\mathbb{X},\nu).
\end{equation*}
\end{corollary}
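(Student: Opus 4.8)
The plan is to reduce the entire statement to the static characterisation of the subdifferential in Corollary \ref{cor:subdifferentialonelaplace}, applied pointwise in time. By Definition \ref{dfn:totalvariationflow}, a function $u$ with the prescribed regularity is a weak solution of \eqref{Cauchy100} exactly when $u_t(t,\cdot) \in \Delta_{1,\nu}(u(t,\cdot))$ for almost every $t \in (0,T)$, which by the definition of $\Delta_{1,\nu}$ means $-u_t(t,\cdot) \in \partial\mathcal{TV}(u(t,\cdot))$; equivalently, the pair $(u(t,\cdot), -u_t(t,\cdot))$ lies in $\partial\mathcal{TV}$ for a.e. such $t$. The whole corollary then follows by substituting $v = -u_t(t,\cdot)$ into the equivalences $(a)$--$(d)$ of Corollary \ref{cor:subdifferentialonelaplace}, separately for each $t$ in a set of full measure.

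First I would record the sign bookkeeping. With $v = -u_t(t,\cdot)$, the condition $-\mbox{div}(X) = v$ of part $(b)$ of the static corollary becomes $\mbox{div}(X(t)) = u_t(t,\cdot)$, which is precisely the divergence condition in Definition \ref{dfn:totalvariationflow}; and the static variational inequality $\int_{\mathbb{X}} v(w-u)\,d\nu \leq \int_{\mathbb{X}}(X,Dw) - \int_{\mathbb{X}}|Du|_\nu$ turns into $\int_{\mathbb{X}} u_t(u(t)-w)\,d\nu \leq \int_{\mathbb{X}}(X(t),Dw) - \int_{\mathbb{X}}|Du(t)|_\nu$, matching part $(b)$ of the present corollary, while the corresponding equality gives part $(c)$. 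The temporal regularity $u \in C([0,T];L^2(\mathbb{X},\nu)) \cap W_{loc}^{1,2}(0,T;L^2(\mathbb{X},\nu))$ and the initial condition $u(0,\cdot)=u_0$ appear verbatim in all three statements and are simply carried along unchanged.

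For the direction $(a)\Rightarrow(b),(c)$ I would take, for a.e. $t$, the vector field $X(t)$ furnished by Definition \ref{dfn:totalvariationflow} (which realises $(u(t),-u_t(t)) \in \mathcal{A}_1 = \partial\mathcal{TV}$) and invoke the implications $(b)\Rightarrow(d)\Rightarrow(c)$ of Corollary \ref{cor:subdifferentialonelaplace} to obtain the equality of $(c)$ and hence the inequality of $(b)$. Conversely, for $(c)\Rightarrow(a)$ and $(b)\Rightarrow(a)$, the pointwise variational (in)equality together with $\mbox{div}(X(t)) = u_t(t,\cdot)$ is exactly condition $(c)$ of the static corollary evaluated at $(u(t),-u_t(t))$, which by the implication $(c)\Rightarrow(b)$ there forces $(X(t),Du(t)) = |Du(t)|_\nu$ as measures; thus $(u(t),-u_t(t)) \in \mathcal{A}_1 = \partial\mathcal{TV}$ for a.e. $t$, i.e. $u$ is a weak solution.

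The only point requiring a little care — and the one I would flag as the main, though minor, obstacle — is that the static equivalences are applied at each fixed $t$ in a set of full measure, so one must ensure the vector fields $X(t)$ and the exceptional null set of times are selected consistently. Since both the notion of weak solution and conditions $(b)$, $(c)$ only assert the \emph{existence} of such $X(t)$ for almost every $t$, no measurable selection beyond the one already built into Definition \ref{dfn:totalvariationflow} is needed, and the reduction to Corollary \ref{cor:subdifferentialonelaplace} is immediate.
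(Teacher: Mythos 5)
Your proposal is correct and follows exactly the route the paper intends: the paper states this corollary as a direct consequence of Corollary \ref{cor:subdifferentialonelaplace}, applied pointwise in $t$ with $v=-u_t(t,\cdot)$, which is precisely your argument, including the sign bookkeeping and the observation that no extra measurable selection is needed.
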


In particular, weak solutions satisfy the evolution variational inequality (see \cite{AGSBook})
\begin{equation*}
\int_{\mathbb{X}} u_t (u-w) \, d\nu \leq \int_{\mathbb{X}} |Dw|_\nu - \int_{\mathbb{X}} |Du|_\nu.
\end{equation*}
Finally, let us note that also the characterisation of the solutions to the total variation flow introduced in this Section agrees with the notion of variational solutions from \cite{LT} and \cite{BDM}. Its variant has been used to study the total variation flow on a bounded domain with Dirichlet boundary conditions in the metric setting in \cite{BCP}. The following Corollary implies that (not counting the complications arising from the boundary condition) weak solutions of the Cauchy problem \eqref{CauchyF1} agree with the definition introduced in \cite{BCP}. In a forthcoming paper, we have established a Green-Gauss formula on open bounded sets in metric measure spaces and we intend to apply it on bounded domains with either Dirichlet or Neumann boundary data; moreover, we intend to allow the initial data to lie in $L^1$ using a notion of entropy solutions.

\begin{corollary}
Suppose that $u$ is a weak solution of the Cauchy problem \eqref{CauchyF1}. Then, for any $v \in L^1_w(0,T; BV(\mathbb{X}))$ with $\partial_t v \in L^2(\mathbb{X} \times [0,T])$ and $v(0) \in L^2(\mathbb{X},\nu)$ we have
\begin{equation*}
\int_0^T \int_{\mathbb{X}} \partial_t v(v-u) \, d\nu \, dt + \int_0^T \int_{\mathbb{X}} |Dv(t)|_\nu - \int_0^T \int_{\mathbb{X}} |Du(t)|_\nu
\end{equation*}
\begin{equation}\label{varineq}
\geq \frac12 \Vert (v-u)(T) \Vert^2_{L^2(\mathbb{X},\nu)} - \frac12 \Vert v(0)-u_0 \Vert^2_{L^2(\mathbb{X},\nu)}.
\end{equation}
 Note that by \cite[Theorem 3.2]{Brezis} the weak solutions also have this regularity.
\end{corollary}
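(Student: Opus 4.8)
The plan is to follow the template of the analogous corollary for the $p$-Laplacian proved above, making two substitutions dictated by the $BV$ setting: the differential pairing $du(X)$ is replaced by the Anzellotti pairing $(X,Du)$, and the integration by parts furnished by the definition of the divergence is replaced by the Gauss-Green formula of Theorem \ref{thm:generalgreensformula}. Fix a competitor $v$ as in the statement. Since $\partial_t v \in L^2(\mathbb{X}\times[0,T])$ and $v(0)\in L^2(\mathbb{X},\nu)$, we have $v(t)=v(0)+\int_0^t \partial_t v\,ds$, so $v(t)\in L^2(\mathbb{X},\nu)$ for a.e.\ $t$; together with $v\in L^1_w(0,T;BV(\mathbb{X}))$ this ensures that for a.e.\ $t$ the pair $(X(t),v(t))$ satisfies condition \eqref{Anzellotti:assumption} with $p=q=2$, so that the slicewise pairings $(X(t),Dv(t))$ and $(X(t),D(v(t)-u(t)))$ are well defined.

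First I would rewrite the term containing $\partial_t u$. Using the weak-solution identity $\partial_t u(t)=\mathrm{div}(X(t))$ with $\|X(t)\|_\infty\le 1$ and $(X(t),Du(t))=|Du(t)|_\nu$, and applying Theorem \ref{thm:generalgreensformula} slicewise to $v-u$, I get
\begin{equation*}
\int_0^T\!\!\int_{\mathbb{X}} \partial_t u\,(v-u)\,d\nu\,dt = -\int_0^T\!\!\int_{\mathbb{X}} (X(t),D(v-u))\,dt = -\int_0^T\!\!\int_{\mathbb{X}} (X(t),Dv)\,dt + \int_0^T\!\!\int_{\mathbb{X}} |Du(t)|_\nu\,dt,
\end{equation*}
where I used linearity of the pairing in its $BV$ argument and the identity $(X(t),Du(t))=|Du(t)|_\nu$. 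For the first summand, Proposition \ref{prop:boundonAnzellottipairing} combined with $\|X(t)\|_\infty\le 1$ gives $\int_{\mathbb{X}}(X(t),Dv)\le\int_{\mathbb{X}}|(X(t),Dv)|\le\int_{\mathbb{X}}|Dv(t)|_\nu$, whence
\begin{equation*}
\int_0^T\!\!\int_{\mathbb{X}} \partial_t u\,(v-u)\,d\nu\,dt \ge -\int_0^T\!\!\int_{\mathbb{X}} |Dv(t)|_\nu\,dt + \int_0^T\!\!\int_{\mathbb{X}} |Du(t)|_\nu\,dt.
\end{equation*}

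Next I would add the elementary identity, valid by the fundamental theorem of calculus in $L^2(\mathbb{X},\nu)$ applied to $v-u$,
\begin{equation*}
\int_0^T\!\!\int_{\mathbb{X}} (\partial_t v - \partial_t u)(v-u)\,d\nu\,dt = \frac12\|(v-u)(T)\|^2_{L^2(\mathbb{X},\nu)} - \frac12\|v(0)-u_0\|^2_{L^2(\mathbb{X},\nu)}.
\end{equation*}
Writing $\partial_t v\,(v-u)=\partial_t u\,(v-u)+(\partial_t v-\partial_t u)(v-u)$, summing the last two displays, and rearranging gives exactly \eqref{varineq}.

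The genuinely routine part is the algebra, which runs in parallel with the $p$-Laplacian case; the point requiring care is the measurability and integrability bookkeeping that legitimises the slicewise application of Theorem \ref{thm:generalgreensformula} and the interchange of the time integral with the pairing. One must verify that $t\mapsto\int_{\mathbb{X}}(X(t),Dv(t))$ and $t\mapsto\int_{\mathbb{X}}|Du(t)|_\nu$ are measurable and integrable on $[0,T]$. The uniform slicewise bound $|(X(t),Dv(t))|\le|Dv(t)|_\nu$ from Proposition \ref{prop:boundonAnzellottipairing} reduces this to the integrability of $t\mapsto|Dv(t)|_\nu(\mathbb{X})$ and $t\mapsto|Du(t)|_\nu(\mathbb{X})$, which follow from $v\in L^1_w(0,T;BV(\mathbb{X}))$ and the regularity of the weak solution recorded in the statement. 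That the weak solution indeed enjoys the regularity $\partial_t u\in L^2$ and $u(t)\in BV(\mathbb{X},d,\nu)$ needed to run the argument is precisely what \cite[Theorem 3.2]{Brezis} provides, exactly as in the $p$-Laplacian corollary.
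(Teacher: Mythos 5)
Your proof is correct and follows essentially the same route as the paper: both arguments rewrite $\int_0^T\int_{\mathbb{X}}\partial_t u\,(v-u)\,d\nu\,dt$ via the Gauss--Green formula of Theorem \ref{thm:generalgreensformula}, use $(X(t),Du(t))=|Du(t)|_\nu$ together with the bound $(X(t),Dv(t))\le|Dv(t)|_\nu$ from Proposition \ref{prop:boundonAnzellottipairing}, and add the fundamental-theorem-of-calculus identity for $\tfrac12\|(v-u)(t)\|_{L^2(\mathbb{X},\nu)}^2$. The extra attention you pay to verifying condition \eqref{Anzellotti:assumption} slicewise and to the measurability of $t\mapsto\int_{\mathbb{X}}(X(t),Dv(t))$ is a welcome addition but does not change the argument.
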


\begin{proof}
Given a test function $v$ as above, we want to show that \eqref{varineq} holds. We start by computing the term with the time derivative using the characterisation of weak solutions. Applying Green formula we have
$$\int_0^T \int_{\mathbb{X}} \partial_t u (v - u) d \nu dt = \int_0^T \int_{\mathbb{X}} \mbox{div}(X(t)) (v - u) d \nu dt $$
$$  = - \int_0^T \int_{\mathbb{X}} (X(t), Dv(t)) dt + \int_0^T \int_{\mathbb{X}} (X(t), Du(t)) dt.$$
Also,
$$\int_0^T \int_{\mathbb{X}} (\partial_t v - \partial_t u )(v - u) d \nu dt = \frac12 \Vert (v-u)(T) \Vert^2_{L^2(\mathbb{X},\nu)} - \frac12 \Vert v(0)-u_0 \Vert^2_{L^2(\mathbb{X},\nu)}. $$
Since $u$ is a weak solution, adding the two equalities we get
$$\int_0^T \int_{\mathbb{X}} \partial_t v (v - u) d \nu dt $$
$$= - \int_0^T \int_{\mathbb{X}} (X(t), Dv(t)) dt + \int_0^T \int_{\mathbb{X}} (X(t), Du(t)) dt + \frac12 \Vert (v-u)(T) \Vert^2_{L^2(\mathbb{X},\nu)} - \frac12 \Vert v(0)-u_0 \Vert^2_{L^2(\mathbb{X},\nu)} $$
$$ \geq - \int_0^T \vert Dv(t)\vert_\nu (\mathbb{X}) dt + \int_0^T \vert Du(t)\vert_\nu (\mathbb{X}) dt + \frac12 \Vert (v-u)(T) \Vert^2_{L^2(\mathbb{X},\nu)} - \frac12 \Vert v(0)-u_0 \Vert^2_{L^2(\mathbb{X},\nu)}.$$
\end{proof}

\section{Asymptotic Behaviour}\label{sec:asymptotics}

Let $\mathcal{H}$ be a Hilbert space and $J: \mathcal{H} \rightarrow ]-\infty, + \infty]$ a proper, convex, lower semi-continuous functional. Then, it is well known (see \cite{Brezis}) that the abstract Cauchy problem
\begin{equation}\label{ACP123}
\left\{ \begin{array}{ll} u'(t) + \partial J(u(t)) \ni 0, \quad t \in [0,T] \\[5pt] u(0) = u_0, \end{array}\right.
\end{equation}
has a unique strong solution $u(t)$ for any initial datum $u_0 \in \overline{D(J)}$.

For $p \geq 1$, we say that $J$ is {\it $p$-homogenous} if
$$J(\lambda u) = \vert \lambda \vert^p J(u), \quad \forall \, \lambda \not=0, \ u \in \mathcal{H} \quad \hbox{and} \quad  J(0) =0,$$
and we say that $J$ is {\it $p$-coercive} if there exists a constant $C >0$ such that
\begin{equation}\label{coerc}
\Vert u \Vert^p \leq C J(u), \quad \forall \, u\in \mathcal{H}_0,
\end{equation}
where
$$\mathcal{H}_0:= \{ u \in \mathcal{H} \ : \ J(u)=0 \}^{\perp} \setminus \{ 0 \}.$$
Obviously, this inequality is equivalent to positive lower bound of the Rayleigh quotient associated with $J$, i.e.,
$$\lambda_1(J):= \inf_{u \in \mathcal{H}_0} \frac{pJ(u)}{\Vert u \Vert^p} > 0.$$

For $u_0 \in \mathcal{H}_0$, if $u(t)$ is the strong solution of \eqref{ACP123}, we define its {\it extinction time} as
$$T_{\rm ex}(u_0):= \inf \{ T >0 \ : \ u(t) =0,  \ \ \forall \, t \geq T \}.$$

In the next result, we summarize the results obtained by Bungert and Burger in \cite{BB}.

\begin{theorem}\label{mainResult} Let $J$ be a convex, lower-semicontinuous functional on  $\mathcal{H}$ with dense domain. Assume that $J$ is $p$-homogeneous and coercive. For  $u_0 \in \mathcal{H}_0$, let  $u(t)$ be the strong solution of \eqref{ACP123}. Then, we have
\begin{itemize}
\item[(i)] (Finite extinction time) For $1 \leq p <2$,
$$T_{\rm ex}(u_0) \leq \frac{\Vert u_0 \Vert^{p-2}}{(2-p)\lambda_1(J)}.$$

\item[(ii)] (Infinite extinction time) For $ p \geq 2$,
$$T_{\rm ex}(u_0) = +\infty.$$

\item[(iii)] (General upper bounds)
$$\Vert u(t) \Vert^{2-p} \leq \Vert u_0 \Vert^{2-p} - (2-p)\lambda_1(J) t, \quad  1 \leq p < 2,$$
$$\Vert u(t) \Vert^{2} \leq \Vert u_0 \Vert^{2} e^{-2\lambda_1(J) t}, \quad   p = 2,$$
$$\Vert u(t) \Vert^{2-p} \leq \frac{1}{\Vert u_0 \Vert^{2-p} + (2-p)\lambda_1(J) t}, \quad p>2.$$

\item[(iv)] (Sharper bound for the finite extinction)  For $1 \leq p <2$,
$$(2-p)\lambda_1(J) (T_{\rm ex}(u_0) - t) \leq \Vert u(t) \Vert \leq (2-p)\Lambda (t)(T_{\rm ex}(u_0) - t),  $$
where
  $$\Lambda(t):= \frac{J(u(t))}{\Vert u(t) \Vert}.$$
\item[(v)] (Asymptotic profiles for finite extinction) For $1 \leq p <2$, let $$w(t):= \frac{u(t)}{\left(1 - \frac{1}{T_{\rm ex}(u_0)} t \right)^{\frac{1}{2-p}}},$$
    and assume that $w(t)$ converges (possibly up to a subsequence) strongly to some $w_* \in \mathcal{H}$ as $t \to T_{\rm ex}(u_0)$. Then it holds $$ \frac{1}{T_{\rm ex}(u_0)} \in \partial J(w_*),\quad   w_*  \not=0, \quad  \Vert w_* \Vert \leq \Vert u_0 \Vert.$$
    \item[(vi)] (Ground state as asymptotic profile) For $1 \leq p <2$, an asymptotic profile $w_*$ is a ground state, i.e., $ w_* = {\rm argmin} \frac{J(w_*)}{\Vert w_* \Vert}$, if and only if $\lim_{t \nearrow T_{\rm ex}}(u_0) = \lambda_1(J).$
\end{itemize}

\end{theorem}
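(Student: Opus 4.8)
\emph{Approach.} The plan is to reduce the whole statement to the behaviour of two scalar functions along the flow, $t\mapsto \|u(t)\|^2$ and $t\mapsto J(u(t))$, and to exploit the one structural feature that is available, namely $p$-homogeneity, through an Euler-type identity for the subdifferential. By the Brezis--Komura Theorem (Theorem \ref{BKTheorem}) the strong solution satisfies $\sqrt{t}\,u'\in L^2$ and $u\in W^{1,2}_{\mathrm{loc}}$, so $t\mapsto\|u(t)\|^2$ and $t\mapsto J(u(t))$ are absolutely continuous on $(0,T)$ and $-u'(t)$ is the minimal-norm section of $\partial J(u(t))$ for a.e.\ $t$. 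I first record the Euler identity: for convex $p$-homogeneous $J$ and any $v\in\partial J(u)$, inserting $w=\lambda u$ into the subgradient inequality and letting $\lambda\to 1^{\pm}$ gives $\langle v,u\rangle=pJ(u)$. Combined with the chain rule this yields the two working identities
\[
\tfrac{d}{dt}\|u(t)\|^2=-2p\,J(u(t)),\qquad \tfrac{d}{dt}J(u(t))=-\|u'(t)\|^2 .
\]

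\emph{Parts (i)--(iii).} A preliminary observation is that $\{J=0\}$ is a closed subspace on which $J$ is translation-invariant, so $\partial J(u)\perp\{J=0\}$ and the $\{J=0\}$-component of $u(t)$ is frozen; since $u_0\in\mathcal H_0$ this component is zero, hence $u(t)\in\mathcal H_0$ until extinction and the coercivity bound $pJ(u)\ge\lambda_1(J)\|u\|^p$ applies. Feeding this into the first identity gives the autonomous differential inequality $\frac{d}{dt}\|u\|^2\le-2\lambda_1(J)\|u\|^p$. Writing $\phi=\|u\|^2$ and integrating $\phi^{-p/2}\phi'\le-2\lambda_1(J)$ in the three regimes settles everything at once: for $p=2$ one gets $\|u(t)\|^2\le\|u_0\|^2 e^{-2\lambda_1(J)t}$ and hence infinite extinction time; for $1\le p<2$ one gets $\|u(t)\|^{2-p}\le\|u_0\|^{2-p}-(2-p)\lambda_1(J)t$, whose vanishing time is the extinction bound (i); and for $p>2$ the reciprocal power yields the polynomial decay of (iii) together with $T_{\rm ex}=+\infty$.

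\emph{Parts (iv)--(vi).} The extra ingredient is the monotonicity of the Rayleigh quotient $R(t):=pJ(u(t))/\|u(t)\|^p$. Differentiating and using $\frac{d}{dt}J=-\|u'\|^2$, $\langle u,u'\rangle=-pJ(u)$ and the Cauchy--Schwarz inequality $\langle u,u'\rangle^2\le\|u\|^2\|u'\|^2$ shows $R'(t)\le0$. Since $\frac{d}{dt}\|u\|^{2-p}=-(2-p)R(t)$, integrating from $t$ to $T_{\rm ex}$ and using $\lambda_1(J)\le R(s)\le R(t)$ for $s\in[t,T_{\rm ex}]$ produces the two-sided estimate (iv). For the profiles, I insert the self-similar ansatz $u(t)=c(t)\,w(t)$ with $c(t)=(1-t/T_{\rm ex})^{1/(2-p)}$ into the flow, use the $(p-1)$-homogeneity $\partial J(c\,w)=c^{p-1}\partial J(w)$ and the elementary computation $c'=-c^{p-1}/((2-p)T_{\rm ex})$; after dividing by $c^{p-1}$ the only term carrying a derivative of $w$ is $c^{2-p}w'=(1-t/T_{\rm ex})\,w'$, which vanishes in the limit $t\nearrow T_{\rm ex}$. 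Passing to the limit along the assumed convergent subsequence identifies $w_*$ with a nonzero solution of the eigenvalue inclusion of the statement, with $\|w_*\|\le\|u_0\|$ coming from (iv); part (vi) then follows by matching the limit of $R(t)$ with $\lambda_1(J)$ through the equality case of Cauchy--Schwarz.

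\emph{Main obstacle.} The routine part is the integration of the scalar inequalities. The genuinely delicate points are two. First, justifying the chain rule for $t\mapsto J(u(t))$ and the invariance of $\mathcal H_0$ rigorously for a solution that is only $W^{1,2}_{\mathrm{loc}}$ requires the fine regularity from Theorem \ref{BKTheorem} and the identity $\frac{d}{dt}J(u(t))=\langle\xi,u'(t)\rangle$ for every $\xi\in\partial J(u(t))$. Second, and most substantial, is the limit passage in (v): equality in the Cauchy--Schwarz step characterises exactly the self-similar regime, so the strong convergence $w(t)\to w_*$ is the real analytic content (and is assumed, not derived); the work lies in controlling the boundary term $(1-t/T_{\rm ex})w'$ and in excluding $w_*=0$, for which the lower bound in (iv) is precisely what is needed.
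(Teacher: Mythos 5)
The first thing to say is that the paper contains no proof of this theorem: it is stated explicitly as a summary of results of Bungert and Burger, with the citation to \cite{BB} standing in for an argument. So there is no in-paper proof to compare against, and what you have written is essentially a compressed reconstruction of the strategy of \cite{BB}: the Euler identity $\langle v,u\rangle=pJ(u)$ for $v\in\partial J(u)$, the two energy identities $\frac{d}{dt}\Vert u\Vert^2=-2pJ(u)$ and $\frac{d}{dt}J(u)=-\Vert u'\Vert^2$, reduction to scalar differential inequalities, and monotonicity of the Rayleigh quotient $R(t):=pJ(u(t))/\Vert u(t)\Vert^p$. Those ingredients are correct as you state them (including the invariance of $\{J=0\}^{\perp}$ under the flow), and your computations in fact silently correct several slips in the theorem as printed: the exponent $p-2$ in (i) should be $2-p$; the two-sided estimate in (iv) controls $\Vert u(t)\Vert^{2-p}$ via $R(t)$, not $\Vert u(t)\Vert$ via $\Lambda(t)$; and the inclusion in (v) should read $\frac{w_*}{(2-p)T_{\rm ex}(u_0)}\in\partial J(w_*)$ rather than the scalar $\frac{1}{T_{\rm ex}(u_0)}$.

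There is, however, one genuine logical gap, in part (ii). A decaying \emph{upper} bound such as $\Vert u(t)\Vert^2\le\Vert u_0\Vert^2e^{-2\lambda_1(J)t}$ (or the polynomial one for $p>2$) says nothing about whether $u$ vanishes in finite time; ``the upper bound decays, hence infinite extinction time'' is a non sequitur. What (ii) requires is a positive \emph{lower} bound on $\Vert u(t)\Vert$ for every finite $t$, and this is exactly where the Rayleigh-quotient monotonicity that you only invoke for (iv) must enter: for $p>2$ and $t\ge t_0>0$ one has $\frac{d}{dt}\Vert u\Vert^{2-p}=(p-2)R(t)\le(p-2)R(t_0)$, so $\Vert u(t)\Vert^{2-p}$ grows at most linearly and cannot blow up in finite time; for $p=2$, $\frac{d}{dt}\log\Vert u\Vert^{2}=-2R(t)\ge-2R(t_0)$ gives the exponential lower bound. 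You should establish $R'\le 0$ before (ii) and use it there. Beyond this, parts (v)--(vi) remain a plan rather than a proof: the vanishing of $(1-t/T_{\rm ex}(u_0))\,w'(t)$ --- equivalently the convergence of $-c(t)^{1-p}u'(t)$ and the identification of its limit --- is the substantive analytic step of \cite{BB}, which you acknowledge but do not carry out, and the ``if and only if'' of (vi) is asserted in one sentence. In short: (i), (iii), (iv) are essentially complete modulo the regularity facts you cite from Theorem \ref{BKTheorem}, but (ii) as written is broken (though fixable with tools you already have), and (v)--(vi) are not yet proved.
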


Now we are going to apply Theorem \ref{mainResult} to study the asymptotic behaviour of the weak solutions of the Cauchy problems \eqref{eq:p-Lapcauchy} and \eqref{CauchyF1}.

Obviously, the convex, lower semi-continuous functionals $\mathsf{Ch}_p$ are $p$-homogeneous.

In the case $\nu(\mathbb{X}) < \infty$, we have that $\mathsf{Ch}_p$ is coercive if and only if if there exists a constant $M >0$ such that
\begin{equation}\label{coercDfini}
\Vert u \Vert_{L^2(\mathbb{X}, \nu)}^p \leq M \, \mathsf{Ch}_p(u), \quad \forall \, u\in \left\{ u \in L^2(\mathbb{X}, \nu), \ u \not=0,  \ : \int_\mathbb{X} u \, d\nu =0\ \right\},
\end{equation}

which is equivalent to the following {\it Poincar\'{e} inequality}

\begin{equation}\label{Poincare1}
\Vert u - \overline{u} \Vert_{L^2(\mathbb{X}, \nu)}^p \leq M \, \mathsf{Ch}_p(u) \quad \forall \, u \in W^{1,p}(\mathbb{X}, d, \nu) \cap L^2(\mathbb{X}, \nu),
\end{equation}
 where
 $$\overline{u}:= \frac{1}{\nu(\mathbb{X})} \int_\mathbb{X} u d\nu$$
 for $1 <p <\infty$; and

\begin{equation}\label{Poincare1p=1}
\Vert u - \overline{u} \Vert_{L^2(\mathbb{X}, \nu)} \leq M \, \mathcal{TV}(u) \quad \forall \, u \in BV(\mathbb{X}, d, \nu) \cap L^2(\mathbb{X}, \nu),
\end{equation}
for $p = 1.$

In the case $\nu(\mathbb{X}) =+\infty$, we have that $\mathsf{Ch}_p$ is coercive if and only if the following {\it Sobolev inequality} holds:  there  exists a constant $M >0$ such that
\begin{equation}\label{Sobolev1p}
\Vert u \Vert_{L^2(\mathbb{X}, \nu)}^p \leq M \, \mathsf{Ch}_p(u) \quad \forall \, u \in L^2(\mathbb{X}, \nu). \end{equation}

{ Then, if we assume that \eqref{Poincare1} or \eqref{Poincare1p=1} holds, we have
$$L^2(\mathbb{X}, \nu)_0:= \{ u \in L^2(\mathbb{X}, \nu) \ : \ \mathsf{Ch}_p(u)=0 \}^{\perp} \setminus \{ 0 \} = \left\{ u \in L^2(\mathbb{X}, \nu), \ u \not=0,  \ : \int_\mathbb{X} u \, d\nu =0\ \right\},$$
in the case $\nu(\mathbb{X}) < \infty$, and if we assume that \eqref{Sobolev1p} holds, we have
$$L^2(\mathbb{X}, \nu)_0 = L^2(\mathbb{X}, \nu) \setminus \{ 0 \},$$
in the case $\nu(\mathbb{X}) = +\infty$.}

As a consequence of Theorem \ref{mainResult}, we have the following results.

\begin{theorem}\label{mainResultfinit}  Assume that $\nu(\mathbb{X}) < \infty$ and the  Poincar\'{e} inequality \eqref{Poincare1} holds, for $1 <p <\infty$ and \eqref{Poincare1p=1}, for $p=1$. For $u_0 \in L^2(\mathbb{X}, \nu)$, let $u(t)$ be the weak solution of the Cauchy problem \eqref{eq:p-Lapcauchy}, for $1 < p < \infty$, and the weak solution of the Cauchy problem \eqref{CauchyF1}, for $p=1$. Then, we have
\begin{itemize}
\item[(i)] (Finite extinction time) For $1 \leq p <2$,
$$T_{\rm ex}(u_0) \leq \frac{\Vert u_0 \Vert_{ L^2(\mathbb{X}, \nu)}^{p-2}}{(2-p)\lambda_1(\mathsf{Ch}_p)},$$
where
$$T_{\rm ex}(u_0):= \inf \{ T >0 \ : \ u(t) = \overline{u_0},  \ \ \forall \, t \geq T \}.$$

\item[(ii)] (Infinite extinction time) For $p \geq 2$,
$$T_{\rm ex}(u_0) = +\infty.$$

\item[(iii)] (General upper bounds)
$$\Vert u(t) -\overline{u_0} \Vert_{ L^2(\mathbb{X}, \nu)}^{2-p} \leq \Vert u_0 \Vert_{ L^2(\mathbb{X}, \nu)}^{2-p} - (2-p)\lambda_1(\mathsf{Ch}_p) t, \quad  1 \leq p < 2,$$
$$\Vert u(t)-\overline{u_0} \Vert_{ L^2(\mathbb{X}, \nu)}^{2} \leq \Vert u_0 \Vert_{ L^2(\mathbb{X}, \nu)}^{2} e^{-2\lambda_1(\mathsf{Ch}_p) t}, \quad   p = 2,$$
$$\Vert u(t) -\overline{u_0}\Vert_{ L^2(\mathbb{X}, \nu)}^{2-p} \leq \frac{1}{\Vert u_0 \Vert_{ L^2(\mathbb{X}, \nu)}^{2-p} + (2-p)\lambda_1(\mathsf{Ch}_p) t}, \quad p>2.$$

\item[(iv)] (Sharper bound for the finite extinction)  For $1 \leq p <2$,
$$(2-p)\lambda_1(\mathsf{Ch}_p) (T_{\rm ex}(u_0) - t) \leq \Vert u(t) -\overline{u_0} \Vert_{ L^2(\mathbb{X}, \nu)} \leq (2-p)\Lambda (t)(T_{\rm ex}(u_0) - t),  $$
where
  $$\Lambda(t):= \frac{\mathsf{Ch}_p(u(t))}{\Vert u(t) \Vert}.$$
\item[(v)] (Asymptotic profiles for finite extinction) For $1 \leq p <2$, let $$w(t):= \frac{u(t)}{\left(1 - \frac{1}{T_{\rm ex}(u_0)} t \right)^{\frac{1}{2-p}}},$$
    and assume that $w(t)$ converges (possibly up to a subsequence) strongly to some $w_* \in  L^2(\mathbb{X}, \nu)$ as $t \to T_{\rm ex}(u_0)$. Then it holds $$ \frac{1}{T_{\rm ex}(u_0)} \in \partial \mathsf{Ch}_p(w_*),\quad   w_*  \not=0, \quad  \Vert w_* \Vert_{ L^2(\mathbb{X}, \nu)} \leq \Vert u_0 \Vert_{ L^2(\mathbb{X}, \nu)}.$$
    \item[(vi)] (Ground state as asymptotic profile) For $1 \leq p <2$, an asymptotic profile $w_*$ is a ground state, i.e., $ w_* = {\rm argmin} \frac{\mathsf{Ch}_p(w_*)}{\Vert w_* \Vert_{ L^2(\mathbb{X}, \nu)}}$, if and only if $\lim_{t \nearrow T_{\rm ex}}(u_0) = \lambda_1(\mathsf{Ch}_p).$
\end{itemize}

\end{theorem}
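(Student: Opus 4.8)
The plan is to deduce Theorem \ref{mainResultfinit} directly from the abstract results of Bungert and Burger recorded in Theorem \ref{mainResult}, applied to the Hilbert space $\mathcal{H} = L^2(\mathbb{X},\nu)$ and the functional $J = \mathsf{Ch}_p$ (with $\mathsf{Ch}_1 = \mathcal{TV}$). The only genuine discrepancy between the two statements is that Theorem \ref{mainResult} describes extinction towards $0$, whereas here the solution relaxes towards its mean value $\overline{u_0}$; the whole argument consists in reconciling this by a translation by a constant, after verifying that the hypotheses of Theorem \ref{mainResult} hold.

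First I would check the hypotheses. Convexity and lower semicontinuity of $\mathsf{Ch}_p$ and $\mathcal{TV}$ on $L^2(\mathbb{X},\nu)$ are recalled in Sections \ref{sec:plaplace} and \ref{sec:TVflow}, density of the domain is part of Theorems \ref{thm:plaplaceflow} and \ref{thm:totalvariationflow}, and $p$-homogeneity is immediate from the definition. The decisive point is the identification of the kernel: if $\mathsf{Ch}_p(u) = 0$ then $|Du| = 0$ $\nu$-a.e. (resp. $|Du|_\nu = 0$), and the Poincar\'e inequality \eqref{Poincare1} (resp. \eqref{Poincare1p=1}) forces $\|u - \overline{u}\|_{L^2} = 0$, so $u$ is constant; conversely constants have vanishing energy. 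Since $\nu(\mathbb{X}) < \infty$, the orthogonal complement of the constants in $L^2(\mathbb{X},\nu)$ is exactly $\{u : \int_{\mathbb{X}} u \, d\nu = 0\}$, which is the set $L^2(\mathbb{X},\nu)_0$ identified before the theorem. On this set the coercivity condition \eqref{coerc} reads $\|u\|^p \le M\,\mathsf{Ch}_p(u)$, which is precisely the assumed Poincar\'e inequality evaluated at zero-mean functions (where $\overline{u}=0$).

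Next I would set up the translation. Writing $c = \overline{u_0}$ for the constant function equal to the mean of $u_0$, I define $\tilde u(t) := u(t) - c$. Because adding a constant leaves the energy unchanged, one has $\mathsf{Ch}_p(w + c) = \mathsf{Ch}_p(w)$ and hence $\partial \mathsf{Ch}_p(w + c) = \partial \mathsf{Ch}_p(w)$ for every $w$; since $\tilde u' = u'$, the shifted curve $\tilde u$ is the strong solution of the same gradient flow with initial datum $\tilde u_0 = u_0 - \overline{u_0}$, which has zero mean and therefore lies in $L^2(\mathbb{X},\nu)_0$ (the case $u_0$ constant being trivial). Moreover any selection $v \in \partial \mathsf{Ch}_p(w)$ is orthogonal to the constants, equivalently the flow preserves mass, as observed after Theorem \ref{ExisUniqp} via $\int_{\mathbb{X}} u_t \, d\nu = \int_{\mathbb{X}} \mathrm{div}(X)\,d\nu = \int_{\mathbb{X}} d1(X)\,d\nu = 0$, so $\tilde u(t)$ remains in $L^2(\mathbb{X},\nu)_0$ for all $t$. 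Thus $\tilde u$ is exactly a flow of the type governed by Theorem \ref{mainResult} with $\mathcal{H}_0 = L^2(\mathbb{X},\nu)_0$.

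Finally I would apply Theorem \ref{mainResult} to $\tilde u$ and translate back. Extinction of $\tilde u$ to $0$ is exactly the convergence of $u$ to $\overline{u_0}$, so $T_{\rm ex}(u_0)$ as defined equals the extinction time of $\tilde u$; moreover $\|\tilde u(t)\| = \|u(t) - \overline{u_0}\|$ and $\mathsf{Ch}_p(\tilde u(t)) = \mathsf{Ch}_p(u(t))$, so items (ii)--(vi) follow from the corresponding parts of Theorem \ref{mainResult} applied to $\tilde u$. For the upper bounds in (i) and (iii), Theorem \ref{mainResult} yields estimates in terms of $\|\tilde u_0\| = \|u_0 - \overline{u_0}\|$; since $u_0 - \overline{u_0}$ is orthogonal to the constant $\overline{u_0}$ we have $\|u_0 - \overline{u_0}\| \le \|u_0\|$, and because the relevant exponent $2-p$ is positive for $1 \le p < 2$ this lets one replace $\|u_0 - \overline{u_0}\|$ by the larger $\|u_0\|$ while preserving the inequalities, giving the stated bounds. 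I do not anticipate a serious obstacle, as the analytic core is supplied by Theorem \ref{mainResult}; the only care required lies in the reduction itself, namely the identification of the kernel of $\mathsf{Ch}_p$ with the constants, the translation invariance of both the energy and its subdifferential, and the orthogonal-decomposition inequality used to pass between $\|u_0\|$ and $\|u_0 - \overline{u_0}\|$.
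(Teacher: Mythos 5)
Your proposal follows exactly the paper's route: the paper's entire proof of this theorem is the remark that it is ``a direct application of Theorem \ref{mainResult}, having in mind that $\mathsf{Ch}_p(u_0+v_0)=\mathsf{Ch}_p(u_0)$ and $\partial\mathsf{Ch}_p(u_0+v_0)=\partial\mathsf{Ch}_p(u_0)$ for constant $v_0$'' (citing \cite[Proposition A.3]{BB}). Your verification of the hypotheses of Theorem \ref{mainResult} --- identification of the kernel of $\mathsf{Ch}_p$ with the constants via the Poincar\'e inequality, coercivity on the zero-mean subspace, translation of the flow by $\overline{u_0}$, and mass conservation keeping $\tilde u(t)$ in $L^2(\mathbb{X},\nu)_0$ --- is precisely the content the paper leaves implicit, and it is correct.

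The one step that does not go through as you state it is the replacement of $\Vert u_0-\overline{u_0}\Vert$ by $\Vert u_0\Vert$ in item (i). There the exponent on the initial datum is $p-2<0$, not $2-p$: Theorem \ref{mainResult} applied to $\tilde u$ gives $T_{\rm ex}\le \Vert u_0-\overline{u_0}\Vert^{p-2}/\bigl((2-p)\lambda_1(\mathsf{Ch}_p)\bigr)$, and since $\Vert u_0-\overline{u_0}\Vert\le\Vert u_0\Vert$ implies $\Vert u_0-\overline{u_0}\Vert^{p-2}\ge\Vert u_0\Vert^{p-2}$, the bound written with $\Vert u_0\Vert$ is \emph{stronger} than what the translation argument yields; indeed it fails for $u_0=v_0+c$ with $v_0$ of zero mean and $c$ a large constant, since the extinction time depends only on $v_0$ while $\Vert u_0\Vert^{p-2}\to0$. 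The same caveat applies to the upper bound in (iv), where $\Lambda(t)$ is written with $\Vert u(t)\Vert$ rather than $\Vert u(t)-\overline{u_0}\Vert$ in the denominator, and to the normalisation of $w(t)$ in (v) when $\overline{u_0}\neq0$. This is a defect of the statement as printed rather than of your method: what your argument (and the paper's) actually delivers is the set of estimates with $\Vert u_0-\overline{u_0}\Vert$ and $\Vert u(t)-\overline{u_0}\Vert$ throughout, which is clearly what is intended. With that emendation your proof is complete and coincides with the paper's.
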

\begin{proof} It is a direct application of Theorem \ref{mainResult}, having in mind that for any constant function $v_0$ and any $u_0 \in L^2(\mathbb{X}, \nu)$, we have   $\mathsf{Ch}_p(u_0 + v_0) = \mathsf{Ch}_p(u_0)$ and  $\partial \mathsf{Ch}_p(u_0 + v_0) = \partial \mathsf{Ch}_p(u_0)$ (see \cite[Proposition A.3]{BB}).

\end{proof}

The next result is a direct consequence  of Theorem \ref{mainResult}.

\begin{theorem}\label{mainResultinfinit}  Assume that $\nu(\mathbb{X}) =+\infty$ and the Sobolev inequality \eqref{Sobolev1p} holds. For $u_0 \in L^2(\mathbb{X}, \nu)$, let  $u(t)$ be the weak  solution of the Cauchy problem \eqref{eq:p-Lapcauchy}, for $1 < p < \infty$, and the  weak solution of the Cauchy problem \eqref{CauchyF1}, for $p=1$. Then, we have
\begin{itemize}
\item[(i)] (Finite extinction time) For $1 \leq p <2$,
$$T_{\rm ex}(u_0) \leq \frac{\Vert u_0 \Vert_{ L^2(\mathbb{X}, \nu)}^{p-2}}{(2-p)\lambda_1(\mathsf{Ch}_p)},$$
where
$$T_{\rm ex}(u_0):= \inf \{ T >0 \ : \ u(t) = 0,  \ \ \forall \, t \geq T \}.$$

\item[(ii)] (Infinite extinction time) For $p \geq 2$,
$$T_{\rm ex}(u_0) = +\infty.$$

\item[(iii)] (General upper bounds)
$$\Vert u(t) \Vert_{ L^2(\mathbb{X}, \nu)}^{2-p} \leq \Vert u_0 \Vert_{ L^2(\mathbb{X}, \nu)}^{2-p} - (2-p)\lambda_1(\mathsf{Ch}_p) t, \quad  1 \leq p < 2,$$
$$\Vert u(t) \Vert_{ L^2(\mathbb{X}, \nu)}^{2} \leq \Vert u_0 \Vert_{ L^2(\mathbb{X}, \nu)}^{2} e^{-2\lambda_1(v) t}, \quad   p = 2,$$
$$\Vert u(t) \Vert_{ L^2(\mathbb{X}, \nu)}^{2-p} \leq \frac{1}{\Vert u_0 \Vert_{ L^2(\mathbb{X}, \nu)}^{2-p} + (2-p)\lambda_1(\mathsf{Ch}_p) t}, \quad p>2.$$

\item[(iv)] (Sharper bound for the finite extinction)  For $1 \leq p <2$,
$$(2-p)\lambda_1(\mathsf{Ch}_p) (T_{\rm ex}(u_0) - t) \leq \Vert u(t) \Vert_{ L^2(\mathbb{X}, \nu)} \leq (2-p)\Lambda (t)(T_{\rm ex}(u_0) - t),  $$
where
  $$\Lambda(t):= \frac{\mathsf{Ch}_p(u(t))}{\Vert u(t) \Vert_{L^2(\mathbb{X}, \nu)} }.$$
\item[(v)] (Asymptotic profiles for finite extinction) For $1 \leq p <2$, let $$w(t):= \frac{u(t)}{\left(1 - \frac{1}{T_{\rm ex}(u_0)} t \right)^{\frac{1}{2-p}}},$$
    and assume that $w(t)$ converges (possibly up to a subsequence) strongly to some $w_* \in  L^2(\mathbb{X}, \nu)$ as $t \to T_{\rm ex}(u_0)$. Then it holds $$ \frac{1}{T_{\rm ex}(u_0)} \in \partial \mathsf{Ch}_p(w_*),\quad   w_*  \not=0, \quad  \Vert w_* \Vert_{L^2(\mathbb{X}, \nu)}  \leq \Vert u_0 \Vert_{L^2(\mathbb{X}, \nu)} .$$
    \item[(vi)] (Ground state as asymptotic profile) For $1 \leq p <2$, an asymptotic profile $w_*$ is a ground state, i.e., $ w_* = {\rm argmin} \frac{\mathsf{Ch}_p(w_*)}{\Vert w_* \Vert_{L^2(\mathbb{X}, \nu)} }$, if and only if $\lim_{t \nearrow T_{\rm ex}}(u_0) = \lambda_1(\mathsf{Ch}_p).$
\end{itemize}

\end{theorem}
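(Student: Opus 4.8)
The plan is to verify that the functional $J = \mathsf{Ch}_p$ (with $\mathsf{Ch}_1 = \mathcal{TV}$ for $p = 1$) satisfies every hypothesis of Theorem \ref{mainResult} on the Hilbert space $\mathcal{H} = L^2(\mathbb{X},\nu)$, and then to invoke that theorem directly. The functional $\mathsf{Ch}_p$ is proper, convex and lower semicontinuous with respect to $L^2(\mathbb{X},\nu)$-convergence, as already used in the proofs of Theorems \ref{thm:plaplaceflow} and \ref{thm:totalvariationflow}; its domain is dense in $L^2(\mathbb{X},\nu)$, as established there via \cite[Proposition 2.11]{Brezis}. It is $p$-homogeneous by the observation recorded just before Theorem \ref{mainResultfinit}. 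This reduces the entire proof to checking coercivity.

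First I would observe that in the infinite-measure case coercivity is, by definition, precisely the Sobolev inequality \eqref{Sobolev1p}: the coercivity bound \eqref{coerc} reads $\Vert u \Vert_{L^2(\mathbb{X},\nu)}^p \leq C\,\mathsf{Ch}_p(u)$ on $\mathcal{H}_0$, which is exactly \eqref{Sobolev1p} with $C = M$. In particular, the Sobolev inequality forces the kernel of $\mathsf{Ch}_p$ to be trivial: if $\mathsf{Ch}_p(u) = 0$ then $\Vert u \Vert_{L^2(\mathbb{X},\nu)}^p \leq M \cdot 0 = 0$, whence $u = 0$. Consequently
$$ \mathcal{H}_0 = \{ u \in L^2(\mathbb{X},\nu) \ : \ \mathsf{Ch}_p(u) = 0 \}^{\perp} \setminus \{ 0 \} = L^2(\mathbb{X},\nu) \setminus \{ 0 \}, $$
in agreement with the identification recorded before the statement, and the associated Rayleigh quotient $\lambda_1(\mathsf{Ch}_p)$ is strictly positive.

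With these verifications in hand, all six conclusions (i)--(vi) follow by applying Theorem \ref{mainResult} verbatim, now with the equilibrium being the zero function (since the kernel is trivial) rather than the mean value $\overline{u_0}$ appearing in Theorem \ref{mainResultfinit}. I do not expect any genuine obstacle here: the only point requiring care is the identification of coercivity with \eqref{Sobolev1p} and the resulting triviality of the kernel, which is exactly what distinguishes this case from the finite-measure case and explains why the extinction is to $0$ rather than to $\overline{u_0}$.
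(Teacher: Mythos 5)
Your proposal is correct and follows exactly the paper's route: the paper states this theorem as a direct consequence of Theorem \ref{mainResult}, having already recorded (in the discussion preceding Theorem \ref{mainResultfinit}) that coercivity in the case $\nu(\mathbb{X})=+\infty$ is equivalent to the Sobolev inequality \eqref{Sobolev1p} and that $L^2(\mathbb{X},\nu)_0 = L^2(\mathbb{X},\nu)\setminus\{0\}$. Your explicit verification of the triviality of the kernel and of the remaining hypotheses (properness, convexity, lower semicontinuity, dense domain, $p$-homogeneity) simply fills in the details the paper leaves implicit.
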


\begin{remark}{\rm
In the monographs  \cite{BjBj}, \cite{Haj}, \cite{HKST}, \cite{Herbey} one can find many important examples of metric measures spaces, including the weighted Euclidean spaces, Riemannian manifolds, Carnot-Carath\'{e}odory spaces, Alexandrov spaces etc., which satisfy the Poincar\'{e} and Sobolev inequalities that we need to apply Theorems \ref{mainResultfinit} and \ref{mainResultinfinit}.}$\blacksquare$
\end{remark}

\section{Some important particular cases}\label{sec:particularcases}

In this section we are going to apply our general results to some important metric measure spaces and to see what is the definition of the $p$-Laplacian operator in these particular metric measure spaces.

\subsection{$p$-Laplacian in Weighted Euclidean Spaces} Endow $\R^N$ with the Euclidean distance $d_{Eucl}$. For a nonnegative Radon measure $\nu$  in $(\R^N, d_{Eucl})$, we refer to the metric measure space $(\R^N,  d_{Eucl},\nu)$ as a {\it weighted Euclidean space}.  For $1 < p < \infty$, $\frac{1}{p} + \frac{1}{q}=1$,  we shall denote the cotangent module by $L^p_\nu(T^*\R^N)$, and the tangent module by $L^q_\nu(T \R^N)$.

Given a metric measure space $(\mathbb{X}, d, \nu)$ and  a Banach space $(\mathbb{B}, \Vert \cdot \Vert)$ we denote by $L^p(\mathbb{X},\mathbb{B}, \nu)$ the set of all Borel maps $v: \mathbb{X} \rightarrow \mathbb{B}$ such that
$$\int_\mathbb{X} \Vert v(x) \Vert^p \, d\nu(x) < \infty.$$
$L^p(\mathbb{X},\mathbb{B}, \nu)$ is a Banach space respect to the norm
$$\left( \int_\mathbb{X} \Vert v(x) \Vert^p \, d\nu(x) \right)^{\frac{1}{p}}.$$
Moreover, $L^p(\mathbb{X},\mathbb{B}, \nu)$ is a $L^p(\nu)$-normed module when endowed with the natural pointwise operations and the the following pointwise norm: given any $v \in L^p(\mathbb{X},\mathbb{B}, \nu)$, we define
$$\vert v(x) \vert:= \Vert v(x) \Vert, \quad \hbox{for $\nu$-a.e.} \ x \in \mathbb{X}.$$

Following \cite{LPR}, we called to the elements of $L^q(\R^N, \R^N, \nu)$ as the {\it concrete vector fields} on $(\R^N,  d_{Eucl},\nu)$. The module $L^p(\R^N, (\R^N)^*, \nu)$ is the dual module of $L^q(\R^N, \R^N, \nu)$ and its elements are said to be the {\it concrete $1$-forms} on $(\R^N,  d_{Eucl},\nu)$.

To distinguish between the classically defined differential and the
one coming from the theory of modules, we shall denote the former by $\underline{d}f$, while keeping $df$ for the latter. More generally, elements of $L^p(\R^N, (\R^N)^*, \nu)$ or $L^q(\R^N, \R^N, \nu)$ will typically be
underlined, while those of $L^p_\nu(T^*\R^N)$, $L^q_\nu(T \R^N)$ will be not. The strong differential of a given function $f \in C^\infty_c(\R^N)$ will be denote by $\underline{d}f \in L^q(\R^N, (\R^N)^*, \nu)$.

We denote by $D^{q,r}(\underline{{\rm div}}_\nu)$ the space of all vector fields $\underline{X} \in L^q(\R^N, \R^N, \nu)$ whose distributional divergence belongs to $L^r(\R^N, \nu)$. Namely, there exists a function $\underline{{\rm div}}_\nu(\underline{X}) \in L^r(\R^N, \nu)$ such that
$$\int_{\R^N} \nabla \varphi \cdot \underline{X} \, d \nu = - \int_{\R^N} \varphi \, \underline{{\rm div}}_\nu(\underline{X}) \, d \nu, \quad \hbox{for every} \ \  \varphi \in C_c^\infty(\R^N).$$

The following result was proved for $p=q=2$ in \cite{GP} (see also \cite{LPR}),  but the proof also works for the case $1 < p < \infty$, so we have the following result.

\begin{theorem} There exists a unique surjective morphism $P_\nu: L^p(\R^N, (\R^N)^*, \nu) \rightarrow  L^p_\nu(T^*\R^N)$ such that
$$P_\nu( \underline{d} f) = df, \quad \hbox{for every} \ f \in C^\infty_c(\R^N),$$
and
\begin{equation}\label{good1}\vert \omega \vert_{*} = \min_{\underline{\omega} \in P_\nu^{-1}(\omega)} \vert \underline{\omega} \vert, \quad \hbox{ $\nu$-a.e.} \quad \forall \, \omega \in L_\nu^p(T^*\R^N).
          \end{equation}
Denote by $i_\nu : L^q_\nu(T \R^N) \rightarrow L^q(\R^N, \R^N, \nu)$ the adjoint of $P_\nu$, i.e., the unique morphism satisfying
$$P_\nu (\underline{\omega}) (v) = \underline{\omega}(i_\nu(v)), \quad \hbox{for every} \ v \in  L^q_\nu(T \R^N)  \ \hbox{and} \ \ \underline{\omega} \in L^q(\R^N, \R^N, \nu).$$
Then, we have that
$$\vert i_\nu (v) \vert = \vert v \vert \quad \hbox{holds $\nu$-a.e. \ on} \ \R^N,\quad \hbox{for any given} \ v \in  L^q_\nu(T \R^N).$$

\end{theorem}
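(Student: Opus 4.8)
The plan is to adapt the argument of Gigli and Pasqualetto \cite{GP} (see also \cite{LPR}), which is given there for $p=2$. The key observation is that the construction uses only two facts: the pointwise bound $|df|_* \le |\underline{d}f|$, and the duality between the pointwise norms on the cotangent and the tangent modules. Neither is Hilbertian, so the self-duality of $L^2$ is simply replaced by the $L^p$--$L^q$ duality; the role played by the Hilbert structure when $p=2$ is taken over by the reflexivity of $L^p(\R^N,(\R^N)^*,\nu)$, which holds for $1<p<\infty$ because $(\R^N)^*$ is finite-dimensional.

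First I would define $P_\nu$ on generators. For $f\in C^\infty_c(\R^N)$ the slope $|\nabla f|$ is an upper gradient of $f$, whence $|df|_*=|Df|\le|\nabla f|=|\underline{d}f|$ $\nu$-a.e. Setting $P_\nu(\underline{d}f):=df$ and extending $L^\infty$-linearly to simple concrete $1$-forms $\sum_i \chi_{A_i}\underline{d}f_i$, the estimate $|P_\nu(\underline{\omega})|_*\le|\underline{\omega}|$ holds pointwise; this shows simultaneously that $P_\nu$ is well defined (a simple form representing the zero concrete $1$-form is sent to an element of vanishing pointwise norm, hence to $0$) and that $P_\nu$ is non-expansive. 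Since the coordinate forms $\underline{d}x_1,\dots,\underline{d}x_N$ generate $L^p(\R^N,(\R^N)^*,\nu)$ over $L^\infty$, and each coincides on every ball with the differential of a compactly supported smooth function, the simple forms are dense, so $P_\nu$ extends uniquely to a morphism of operator norm at most $1$.

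Next I would establish the minimal-norm formula \eqref{good1}, which also yields surjectivity since it asserts that every fibre $P_\nu^{-1}(\omega)$ is nonempty. The inequality $|\omega|_*\le|\underline{\omega}|$ for $\underline{\omega}\in P_\nu^{-1}(\omega)$ is immediate from the norm bound; the reverse inequality, i.e. the existence of a concrete representative attaining the pointwise norm, is the main obstacle. For $\omega=df$ with $f$ smooth one must select, at $\nu$-a.e. point, a concrete $1$-form of least Euclidean norm in the fibre over $\omega(x)$; this is precisely the identification of the minimal weak upper gradient with the least concrete Euclidean norm, carried out by the measurable-selection/disintegration argument of \cite{GP}. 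For a general $\omega$ I would approximate it in the cotangent norm by elements $\omega_n$ admitting optimal representatives $\underline{\omega}_n$ with $|\underline{\omega}_n|=|\omega_n|_*$; these are bounded in $L^p(\R^N,(\R^N)^*,\nu)$, so by reflexivity a subsequence converges weakly to some $\underline{\omega}$ with $P_\nu(\underline{\omega})=\omega$, and weak lower semicontinuity of the norm together with the pointwise bound $|\omega|_*\le|\underline{\omega}|$ forces $|\underline{\omega}|=|\omega|_*$ $\nu$-a.e.

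Finally, with $i_\nu$ the adjoint of $P_\nu$, the isometry follows by pointwise duality. For $v\in L^q_\nu(T\R^N)$ one has, $\nu$-a.e.,
\[
|i_\nu(v)| = \mathop{\mathrm{ess\,sup}}_{\underline{\omega}\ne0}\frac{\underline{\omega}(i_\nu(v))}{|\underline{\omega}|} = \mathop{\mathrm{ess\,sup}}_{\underline{\omega}\ne0}\frac{P_\nu(\underline{\omega})(v)}{|\underline{\omega}|} = \mathop{\mathrm{ess\,sup}}_{\omega\ne0}\frac{\omega(v)}{|\omega|_*} = |v|,
\]
where the first equality is the finite-dimensional Euclidean duality of the concrete pointwise norms, the second is the defining relation for $i_\nu$, the third groups the supremum over $\underline{\omega}$ according to its image $\omega=P_\nu(\underline{\omega})$ and minimises $|\underline{\omega}|$ over each fibre using \eqref{good1} (legitimate by surjectivity), and the last is the definition of the pointwise norm on the dual module. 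I expect the measurable selection in the third paragraph to be the only genuinely delicate point; everything else is a routine transcription of the $p=2$ argument with $L^p$--$L^q$ duality in place of the Hilbert structure.
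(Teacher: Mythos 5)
Your proposal is correct and follows essentially the route the paper itself takes: the paper offers no proof beyond the assertion that the $p=2$ argument of \cite{GP} (see also \cite{LPR}) transfers verbatim to $1<p<\infty$, and your sketch is a faithful transcription of that argument with the Hilbert self-duality replaced by $L^p$--$L^q$ duality and reflexivity of $L^p(\R^N,(\R^N)^*,\nu)$. The one genuinely delicate step you flag --- the measurable selection producing a concrete representative of $df$ whose pointwise Euclidean norm equals the minimal $p$-weak upper gradient $\nu$-a.e. --- is exactly the characterisation established in \cite{LPR} for general $p$, so deferring to it is legitimate.
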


Now we are going to characterize the $p$-Laplacian operator in the  weighted Euclidean space $(\R^N,  d_{Eucl},\nu)$.

\begin{theorem}\label{CharactWES} Let $1 < p < \infty$ be and let $\Delta_{p,\nu}$ be the $p$-Laplacian  in  $(\R^N,  d_{Eucl},\nu)$. For $v \in L^2(\R^N, \nu)$, we have $-\Delta_{p,\nu} u = v$ if and only if $u \in W^{1,p}(\R^N,d_{Eucl},\nu)$ and there exists a concrete vector field  $\underline{X} \in D^{q,2}(\underline{{\rm div}}_\nu)$ satisfying the following condition:
\begin{equation}\label{E1Chat}
\Vert \underline{X}  \Vert^q  \leq \vert Du \vert^p_\nu \quad \hbox{$\nu$-a.e. in} \ \R^N,
\end{equation}
\begin{equation}\label{E2Chat}
- \underline{{\rm div}}_\nu(\underline{X}) = v\quad \hbox{in} \ \R^N,
\end{equation}
\begin{equation}\label{E3Chat}
\hbox{there exists} \ \underline{u} \in P_\nu^{-1}(du) \in L^p(\R^N, (\R^N)^*, \nu), \ \hbox{such that} \ \underline{u}(\underline{X}) = \vert \underline{u} \vert^p \ \hbox{$\nu$-a.e. \ in} \ \R^N.
\end{equation}

\end{theorem}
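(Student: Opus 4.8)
The plan is to read the abstract characterisation of $\partial\mathsf{Ch}_p$ obtained in Theorem \ref{thm:plaplaceflow} through the isometric dictionary between the abstract (co)tangent modules and their concrete Euclidean counterparts furnished by $P_\nu$ and its adjoint $i_\nu$. By definition $-\Delta_{p,\nu}u=v$ means $(u,v)\in\partial\mathsf{Ch}_p=\mathcal{A}_p$, so Theorem \ref{thm:plaplaceflow} makes it equivalent to the existence of an \emph{abstract} vector field $X\in\mathcal{D}^{q,2}(\R^N)$ with $|X|^q\le|du|_*^p$, $-\mathrm{div}(X)=v$ and $du(X)=|du|_*^p$ $\nu$-a.e. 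The whole argument consists in transporting these three conditions on $X$ into the conditions \eqref{E1Chat}--\eqref{E3Chat} on a concrete field $\underline{X}$, using that $i_\nu$ is a pointwise isometry, that $P_\nu(\underline{d}\varphi)=d\varphi$ for $\varphi\in C^\infty_c(\R^N)$, and the minimality property \eqref{good1}. Note that the regularity $u\in W^{1,p}(\R^N,d_{Eucl},\nu)$ appears on both sides, so only the statements about the field need translating.

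For the forward implication I would set $\underline{X}:=i_\nu(X)$. Then $\Vert\underline{X}\Vert=|i_\nu(X)|=|X|$ $\nu$-a.e. gives \eqref{E1Chat} (in fact with equality). Testing the concrete divergence against $\varphi\in C^\infty_c(\R^N)\subset W^{1,p}\cap L^2$ and using the adjointness $\underline{d}\varphi(i_\nu(X))=P_\nu(\underline{d}\varphi)(X)=d\varphi(X)$ turns the abstract relation $\int d\varphi(X)\,d\nu=\int\varphi v\,d\nu$ into $\int\nabla\varphi\cdot\underline{X}\,d\nu=\int\varphi v\,d\nu$, so $\underline{X}\in D^{q,2}(\underline{\mathrm{div}}_\nu)$ with $-\underline{\mathrm{div}}_\nu(\underline{X})=v$, which is \eqref{E2Chat}. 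Finally, choosing $\underline{u}\in P_\nu^{-1}(du)$ to realise the minimum in \eqref{good1}, so that $|\underline{u}|=|du|_*$, adjointness gives $\underline{u}(\underline{X})=\underline{u}(i_\nu X)=P_\nu(\underline{u})(X)=du(X)=|du|_*^p=|\underline{u}|^p$, which is \eqref{E3Chat}.

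The converse is the substantial direction, and the key difficulty is that $i_\nu$ is \emph{not} surjective, so one cannot invert it on an arbitrary $\underline{X}$. Here I would first use \eqref{E3Chat} and \eqref{E1Chat} to force the necessary alignment: combining $\underline{u}(\underline{X})=|\underline{u}|^p$ with Cauchy--Schwarz and $\Vert\underline{X}\Vert^q\le|Du|_\nu^p$, together with $|\underline{u}|\ge|du|_*=|Du|_\nu$ from \eqref{good1}, yields the chain $|\underline{u}|^p=\underline{u}(\underline{X})\le\Vert\underline{u}\Vert\,\Vert\underline{X}\Vert\le|\underline{u}|\,|Du|_\nu^{p-1}$, whence $|\underline{u}|=|du|_*$ and $\Vert\underline{X}\Vert^q=|du|_*^p$, and the Cauchy--Schwarz equality case forces $\underline{X}$ to be pointwise a nonnegative multiple of $\underline{u}$. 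Since $\underline{u}$ realises the minimum in \eqref{good1}, comparing $|\underline{u}|$ with $|\underline{u}+t\underline{\eta}|$ for $\underline{\eta}\in\ker P_\nu$ shows that $\underline{u}$ is pointwise Euclidean-orthogonal to $\ker P_\nu$; being a pointwise multiple of $\underline{u}$, so is $\underline{X}$. Consequently the rule $X(\omega):=\underline{\omega}(\underline{X})$ (for any $\underline{\omega}\in P_\nu^{-1}(\omega)$) is independent of the chosen representative and defines an element $X\in L^q_\nu(T\R^N)$ with $i_\nu(X)=\underline{X}$ and $|X|=\Vert\underline{X}\Vert$. One then reverses the computations of the forward part: $|X|^q=\Vert\underline{X}\Vert^q=|du|_*^p$ gives the gradient bound, $du(X)=P_\nu(\underline{u})(X)=\underline{u}(\underline{X})=|\underline{u}|^p=|du|_*^p$ gives $X\in\mathrm{Grad}_p(u)$, and the divergence is carried back to the abstract test class. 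With such an $X$ in hand, Theorem \ref{thm:plaplaceflow} yields $(u,v)\in\partial\mathsf{Ch}_p$, i.e. $-\Delta_{p,\nu}u=v$.

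The main obstacle I anticipate is the compatibility of the two notions of divergence in the converse: the concrete divergence in $D^{q,2}(\underline{\mathrm{div}}_\nu)$ is tested only against $\varphi\in C^\infty_c(\R^N)$, whereas membership of $X$ in $\mathcal{D}^{q,2}(\R^N)$ requires $\int dg(X)\,d\nu=\int gv\,d\nu$ for \emph{all} $g\in W^{1,p}\cap L^2$. Both sides are continuous in $g$ for the $W^{1,p}\cap L^2$ topology (the left by $|dg(X)|\le|dg|_*|X|$ and Hölder, the right by Cauchy--Schwarz), so the passage reduces to the density of $C^\infty_c(\R^N)$ in $W^{1,p}(\R^N,d_{Eucl},\nu)\cap L^2(\R^N,\nu)$, which in the weighted Euclidean setting is available through mollification (this is also what underlies the identification of modules in \cite{LPR,GP}). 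The alignment argument above is what rescues the converse despite $i_\nu$ failing to be onto, and I expect it, rather than the bookkeeping, to be the conceptual heart of the proof.
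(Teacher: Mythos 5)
Your proof follows the same route as the paper's. The forward implication is essentially verbatim the paper's argument: set $\underline{X}:=i_\nu(X)$, use that $i_\nu$ preserves the pointwise norm to get \eqref{E1Chat}, test the divergence against $\varphi\in C_c^\infty(\R^N)$ via the adjointness $\underline{d}\varphi(i_\nu(X))=d\varphi(X)$ to get \eqref{E2Chat}, and pick the representative $\underline{u}\in P_\nu^{-1}(du)$ realising the minimum in \eqref{good1} to get \eqref{E3Chat}. The converse likewise proceeds, as in the paper, by lifting $\underline{X}$ to an abstract field $X$ with $i_\nu(X)=\underline{X}$ and reversing the computations before invoking Theorem \ref{thm:plaplaceflow}. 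The one substantive difference is that the paper's converse simply opens with ``let $X\in i_\nu^{-1}(\underline{X})$'', taking the nonemptiness of this preimage for granted, whereas you correctly flag that $i_\nu$ need not be surjective and supply the missing justification: \eqref{E1Chat} and \eqref{E3Chat} force $|\underline{u}|=|du|_*$ together with equality in the pointwise Cauchy--Schwarz inequality, so by strict convexity of the Euclidean norm $\underline{X}$ is $\nu$-a.e.\ a nonnegative multiple of $\underline{u}$, which is itself pointwise orthogonal to the local submodule $\ker P_\nu$ by the minimality in \eqref{good1}; hence $\underline{X}$ annihilates $\ker P_\nu$ and lies in the range of $i_\nu$. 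This alignment argument is correct and fills a step the printed proof leaves unjustified. Your remaining concern --- upgrading the divergence identity from test functions $\varphi\in C_c^\infty(\R^N)$ to all of $W^{1,p}(\R^N,d_{Eucl},\nu)\cap L^2(\R^N,\nu)$ --- is resolved in the paper exactly as you propose, by the density of $C_c^\infty(\R^N)$ in $W^{1,p}(\R^N,d_{Eucl},\nu)$.
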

\begin{proof} Suppose that $-\Delta_{p,\nu} u = v$. Then, $u \in W^{1,p}(\R^N,  d_{Eucl},\nu)$ and there exists a vector field $X \in \mathcal{D}^{q,2}(\R^N)$ with  $| X |^q \leq | du |_*^p$ $\nu$-a.e. such that
\begin{equation}\label{E01Chat} -\mbox{div}(X) = v, \quad  \hbox{in $\R^N$},
\end{equation}
\begin{equation}\label{E02Chat}
du(X) = |du|_*^p  \quad \nu\hbox{-a.e. in } \mathbb{\R^N}.
\end{equation}
Since $X \in L^q_\nu(T \R^N)$, we have  $\underline{X}:= i_\nu(X) \in L^q(\R^N, \R^N, \nu)$. Then,
$$\Vert \underline{X}  \Vert^q = \vert  i_\nu(X) \vert^q = \vert X \vert^q \leq | du |_*^p =\vert Du \vert^p_\nu \quad \hbox{$\nu$-a.e. in} \ \R^N, $$
and \eqref{E1Chat} holds.

For $\varphi \in C_c^\infty(\R^N)$, by \eqref{E01Chat}, we have
$$\int_{\R^N} v \varphi \, d \nu  = - \int_{\R^N} \varphi \,\mbox{div}(X) \, d\nu =  \int_{\R^N} d\varphi(X) \, d\nu = \int_{\R^N} \nabla \varphi \cdot \underline{X} \, d \nu = - \int_{\R^N} \varphi \, \underline{{\rm div}}_\nu(\underline{X}) \, d \nu, $$
and \eqref{E2Chat} holds.

By \eqref{good1}, there exists $\underline{u} \in P_\nu^{-1}(du)$ such that $\vert \underline{u} \vert = \vert du \vert_*$. Then, by \eqref{E02Chat}, we have for $\nu$-a.e. in $\R^N$,
$$\vert \underline{u} \vert^p =  \vert du \vert_*^p = du(X) = P_\nu(\underline{u})(X) = \underline{u}(i_\nu (X)) = \underline{u}(\underline{X}),$$
and \eqref{E3Chat} holds.

Reciprocally, suppose that $u \in W^{1,p}(\R^N,d_{Eucl},\nu)$ and there exists a concrete vector field $\underline{X} \in  D^{q,2}(\underline{{\rm div}}_\nu)$ satisfying \eqref{E1Chat}, \eqref{E2Chat} and \eqref{E3Chat}. Let $X \in i_\nu^{-1}(\underline{X}) \in L^q_\nu(T\R^N)$, then
$$\vert X \vert^q = \vert  i_\nu(X) \vert^q = \Vert \underline{X}  \Vert^q \leq \vert Du \vert^p_\nu = | du |_*^p.$$
Given $\varphi \in C_c^\infty(\R^N)$, by \eqref{E2Chat}, we have
$$\int_{\R^N} v \varphi \, d\nu = \int_{\R^N} \nabla \varphi \cdot \underline{X} \, d\nu = \int_{\R^N} d\varphi(X) \, d\nu =  - \int_{\R^N}\varphi \, \mbox{div}(X) \, d\nu.$$
Now, since $C_c^\infty(\R^N)$ is dense in $W^{1,p}(\R^N,  d_{Eucl},\nu)$, we have that given $g \in W^{1,p}(\R^N,  d_{Eucl},\nu)$,
$$\int_{\R^N} v g \, d\nu = - \int_{\R^N} g \, {\rm div} (X) \, d\nu = \int_{\R^N} dg(X) \, d\nu,$$
and therefore \eqref{E01Chat} holds.

Finally, by \eqref{E3Chat}, we have for $\nu$-a.e. in $\R^N$,
$$\vert du \vert_*^p = \vert \underline{u} \vert^p = P_\nu(\underline{u})(X) = du(X),$$
and \eqref{E02Chat} holds.
\end{proof}

{ Using this characterisation of the operator $\Delta_{p,\nu}$, one can immediately obtain a corresponding characterisation of solutions to the Cauchy problem for the $p$-Laplacian in the weighted Euclidean space $(\R^N,  d_{Eucl},\nu)$ in terms of concrete vector fields.}

\begin{remark}{\rm The above result is new even for the particular case when the weight is absolutely continuous with respect to the Lebesgue measure, i.e. $\nu = \omega \mathcal{L}^N$. To the best of our knowledge, in this particular case we only know the results by T\"olle \cite{Tolle} for weight $\omega = \varphi^p$, with $p \frac{\nabla \varphi}{\varphi} \in L^q_{loc}(\R^N, \nu)$. $\blacksquare$
}
\end{remark}

\subsection{$p$-Laplacian in Finsler Manifolds}

Here we shall assume that $M$ is a $N$-dimensional, connected differentiable manifold of class $C^\infty$. Given $x \in M$, we denote by $T_xM$ the tangent space of $M$ at $x$. We denote $TM:= \sqcup_{x \in M} T_xM$ is the tangent bundle of $M$. Moreover, we denote by $T^*_xM$ and $T^*M$ the cotangent space of $M$ at $x$ and the cotangent bundle of $M$, respectively.

Let $V$ be a finite-dimensional vector space over $\R$. A {\it Minkowski norm} on $V$ is any functional $F: V \rightarrow [0, +\infty)$ satisfying:
\begin{itemize}
\item[(1)] $F(v)= 0 \iff v=0,$
\item[(2)] $F(u + v) \leq F(u) + F(v), \quad \forall \, u,v \in V,$
\item[(3)] $F(\lambda v) = \lambda F(v), \quad \forall \,v \in V \ \hbox{and} \ \lambda \geq 0$,
\item[(4)] $F$ is continuous on $V$ and of class $C^\infty$ on $V \setminus \{ 0 \}$,
\item[(5)] Given any $v \in V \setminus \{ 0 \}$, it holds that the quadratic form
$$V \in w \mapsto \frac12 d^2(F^2)v [w,w]$$
is positive definite, where $d^2$ is the second differential.
\end{itemize}

\begin{definition}{\rm  A {\it Finsler manifold} is any couple $(M,F)$, where $M$ is a given manifold and $F: TM \rightarrow [0, +\infty)$ is a continuous function (called a {\it Finsler structure}) satisfying the following properties:
\begin{itemize}
\item[(i)] The function $F$ is of class $C^\infty$ on $TM \setminus \{ 0 \}$.

\item[(ii)] The functional $F(x, \cdot): T_xM \rightarrow [0, +\infty)$ is a Minkowski norm for every $x \in M$.
\end{itemize}

We say that $(M,F)$ is {\it reversible} if $F(x, \cdot)$ is a symmetric norm on $T_x M$ for any $x \in M$. Moreover, for a Finsler structure $F$ on $M$, we define the dual structure $F^* : T^*M \rightarrow [0, \infty)$ by
$$F^*(x, \alpha):= \sup \{ \alpha \xi \ : \xi \in T_x M,  \ F(x, \xi) \leq 1 \}. $$
We remark that $F^*(x, \cdot)$ is a Minkowski norm on $T^*_x M$.
}
\end{definition}

If $(M,g)$ is a Riemannian manifold, then it is a reversible Finsler manifold $(M,F)$, where $F(x, v):= \sqrt{g_x(v,v)}$.

\begin{definition}{\rm Let $(M,F)$ be a reversible Finsler manifold. Given a piecewise $C^1$ curve $\gamma: [0,1] \rightarrow M$, we define its {\it Finsler length} as
$$\ell_F(\gamma):= \int_0^1 F(\gamma(t),\dot{\gamma}(t)) \, dt.$$
Then, we define the {\it Finsler distance} between two points $x,y \in M$ as
$$d_F(x,y):= \inf \left\{\ell_F(\gamma) \ : \ \gamma: [0,1] \rightarrow M \ \hbox{piecewise} \ C^1 \ \hbox{with}  \ \gamma(0) = x, \ \gamma(1)=y \right\}.$$
}
\end{definition}

From now on we will assume that $(M,F)$ is a geodesically complete, reversible Finsler manifold. Then, by the Hopf-Rinow Theorem (\cite[Theorem 6.6.1]{BCS}), we have that the metric space $(M, d_F)$ is complete and proper. Therefore, if $\nu$ is non-negative Radon measure on $(M, d_F)$, the metric measure space $(M, d_F, \nu)$ satisfies the assumptions used in Section \ref{sec:plaplace}. For $1 < p < \infty$, $\frac{1}{p} + \frac{1}{q}=1$,  we shall denote the cotangent module associated to $(M, d_F, \nu)$ by $L^p_\nu(T^*M)$, and the tangent module associated to $(M, d_F, \nu)$ by $L^q_\nu(T M)$. Following Lu$\check{c}$i\'c and Pasqualetto \cite{LP}, we are going to see that $L^q_\nu(T M)$ can be isometrically embedding into the spaces of all measurable sections of the tangent bundle of $M$ that are $q$-integrable with respect to $\nu$.

Given $f \in C^1(M)$, we denote by $\underline{d}f$ its differential, which is a continuous section of the cotangent bundle $T^*M$. We set
$$\vert \underline{d}f \vert(x):= F^*(x, \underline{d}f(x)) \quad \hbox{for every} \ x \in M,$$
where $F^*(x, \cdot)$ stand for the dual norm of $F(x, \cdot)$. We also can consider the function $f$ as an element to the Sobolev space $W^{1,p}(M, d_F, \nu)$ and we have
\begin{equation}\label{iinneq}
 \vert df \vert = \vert Du \vert_\nu \leq \vert \nabla u \vert = \vert \underline{d}f \vert \quad \hbox{$\nu$-a.e. on } M.
\end{equation}

We define the concrete tangent/cotangent module associated to $(M, d_F, \nu)$ as
$$\Gamma_q(TM;\nu):= \hbox{space of all  \ $L^q(\nu)$-sections of} \ TM,$$
i.e.
$$\Gamma_q(TM;\nu):= \left\{ v : M \rightarrow TM \ : \ v(x) \in T_xM, \ \int_M F(x,v(x))^q \, d\nu(x) < \infty \right\}.$$
$$\Gamma_p(T^*M;\nu):= \hbox{space of all  \ $L^p(\nu)$-sections of} \ T^*M,$$
i.e.
$$\Gamma_p(T^*M;\nu):= \left\{ \underline{\omega} : M \rightarrow T^*M \ : \ \underline{\omega}(x) \in T^*_xM, \ \int_M F^*(x,\underline{\omega}(x))^p \, d\nu(x) < \infty \right\}.$$

The space $ \Gamma_q(TM;\nu)$ has a natural structure of an $L^q(\nu)$-normed module if we endow it with the usual vector structure and the following pointwise operations:
$$(fv)(x):= f(x) v(x) \in T_xM, \quad \vert v \vert(x):= F(x,v(x)), \quad \hbox{for $\nu$-a.e. } \ x \in M$$
for any $v \in \Gamma_q(TM;\nu)$ and $f \in L^\infty(M, \nu)$. Similarly, $\Gamma_p(T^*M;\nu)$ is an $ L^p(\nu)$-normed module. Moreover, $\Gamma_q(TM;\nu)$ and $\Gamma_p(T^*M;\nu)$ are the module duals of the other (for $1 < p < \infty$). Moreover, we have that (see \cite{LP})
$$\{ \underline{d} f \ : \ f \in C^1_c(M) \} \ \hbox{generates}  \ \Gamma_p(T^*M;\nu) \ \hbox{in the sense of modules},$$
where each element  $\underline{d} f$ can be viewed as an element of $\Gamma_p(T^*M;\nu) $ as it is a continuous section of the cotangent module $T^*M$ and its associated pointwise norm $\vert \underline{d} f \vert$ has compact support.

The following result was proved in \cite{LP} for the case $p= q =2$, but the proof is also valid for $1 < p < \infty$.

Given a vector field $\underline{X} \in \Gamma_q(TM;\nu)$, we define its {\it divergence} $\underline{{\rm div}}_\nu : M \rightarrow \R$ through the identity
$$\int_M f \, \underline{{\rm div}}_\nu(\underline{X}) \, d \nu = - \int_M \underline{d}f (\underline{X}) \, d \nu, $$
for all $f \in C^\infty_c(M)$, where $\underline{d}f (\underline{X})$ at $x \in M$ denotes the canonical paring between $T^*_x M$ and $T_xM$. We denote by $D^{q,2}(\underline{{\rm div}}_\nu)$ the space of all vector fields $\underline{X} \in \Gamma_q(TM;\nu)$ such that $\underline{{\rm div}}_\nu(\underline{X}) \in L^2(M, \nu)$.

\begin{theorem}\label{LucicPasresult} Let $1 < p < \infty$. There exists a unique surjective $L^\infty(\nu)$-linear and continuous operator $$P_\nu : \Gamma_p(T^*M;\nu) \rightarrow L_\nu^p(T^*M)$$
such that $P_\nu(\underline{d} f) = df$ for every $f \in C_c^1(M)$. Moreover,
$$\vert P_\nu(\underline {\omega}) \vert \leq \vert \underline {\omega} \vert, \quad \mbox{for every} \ \underline {\omega} \in \Gamma_p(T^*M;\nu),$$
and
\begin{equation}\label{ookk}
\hbox{for any $\omega \in L_\nu^p(T^*M)$ there exists \  $\underline {\omega} \in P_\nu^{-1}(\omega)$ \ such that \ $\vert \omega \vert = \vert \underline {\omega} \vert$ $\nu$-a.e.}
\end{equation}

Let us denote by $$i_\nu : L^q_\nu(T M) \rightarrow \Gamma_q(TM;\nu)$$
the adjoint map of $P_\nu$, i.e., the unique $L^\infty(\nu)$-linear and continuous operator satisfying
$$ \underline {\omega} (i_\nu(v)) = P_\nu( \underline {\omega})(v) \quad \hbox{$\nu$-a.e.} \quad \hbox{for every} \ v \in L^q_\nu(T M) \ \hbox{and} \ \underline {\omega} \in  \Gamma_q(TM;\nu).$$
Then, it holds that
$$\vert i_\nu(v) \vert = \vert v \vert \quad \hbox{$\nu$-a.e.} \quad \hbox{for every} \ v \in L^q_\nu(T M).$$
\end{theorem}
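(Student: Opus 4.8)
The plan is to adapt the $p=q=2$ construction of \cite{LP}, noting that $p$ enters only through the integrability exponent while all the decisive estimates are pointwise (fiberwise) and hence insensitive to $p$. First I would build $P_\nu$ on the generating set. For $f \in C^1_c(M)$ set $P_\nu(\underline{d}f) := df$, and extend to finite simple combinations $\underline\omega = \sum_{i} \chi_{A_i}\underline{d}f_i$, with $A_i$ a Borel partition of $M$ and $f_i \in C^1_c(M)$, by $P_\nu(\underline\omega) := \sum_i \chi_{A_i} df_i$. Both well-definedness and the pointwise bound $|P_\nu(\underline\omega)|_* \le |\underline\omega|$ follow at once from \eqref{iinneq}, since on each $A_i$ one has $|df_i|_* = |Df_i|_\nu \le |\nabla f_i| = |\underline{d}f_i|$; in particular $\underline\omega = 0$ $\nu$-a.e. forces $P_\nu(\underline\omega)=0$. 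Thus $P_\nu$ is $1$-Lipschitz on simple combinations, and since $\{\underline{d}f : f \in C^1_c(M)\}$ generates $\Gamma_p(T^*M;\nu)$ in the sense of modules, $P_\nu$ extends uniquely to a continuous $L^\infty(\nu)$-linear operator on all of $\Gamma_p(T^*M;\nu)$. Approximating a general $\underline\omega$ by simple combinations and using that $\||\underline\omega_k|-|\underline\omega|\|_{L^p}\le\|\underline\omega_k-\underline\omega\|$ (hence $\nu$-a.e. convergence of pointwise norms along a subsequence) propagates $|P_\nu(\underline\omega)|_* \le |\underline\omega|$ to every element. Uniqueness of $P_\nu$ is immediate: it is prescribed on a generating set and is continuous.

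The core of the statement is surjectivity together with the norm-matching \eqref{ookk}, and here the hypothesis $1<p<\infty$ enters decisively, since it makes $\Gamma_p(T^*M;\nu)$ reflexive. I would first treat a single generator $df$, $f\in C^1_c(M)$: in each fiber $T^*_xM$ the affine subspace $\underline{d}f(x)+(\ker P_\nu)(x)$ carries, by strict convexity of $F^*(x,\cdot)$ (condition (5) of a Minkowski norm), a unique $F^*$-minimiser, and a measurable-selection argument produces a concrete form $\underline\omega_f$ with $P_\nu(\underline\omega_f)=df$ and $|\underline\omega_f|(x) = \min_{\underline\eta\in(\ker P_\nu)(x)}F^*(x,\underline{d}f(x)+\underline\eta)$. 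The decisive point is the identity $\min_{\underline\eta\in(\ker P_\nu)(x)}F^*(x,\underline{d}f(x)+\underline\eta) = |Df|_\nu(x)$, i.e. that the minimal $p$-weak upper gradient is exactly the smallest Finsler-dual norm among concrete gradients compatible with $df$. \textbf{This identification is the main obstacle}: the inequality $\ge$ is just \eqref{iinneq}, whereas $\le$ is precisely where the possibly degenerate measure $\nu$ could a priori open a gap, and where the smoothness and reversibility of $F$ together with the analysis of $\ker P_\nu$ from \cite{LP} are needed to close it.

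Granting this identity, for a simple combination $\omega=\sum_i\chi_{A_i}df_i$ the glued form $\sum_i\chi_{A_i}\underline\omega_{f_i}$ is a preimage with $|\cdot| = |\omega|_*$ $\nu$-a.e. Finally, for arbitrary $\omega\in L^p_\nu(T^*M)$ I would choose simple $\omega_k\to\omega$ in $L^p_\nu(T^*M)$ (possible since the $df$, $f\in C^1_c(M)$, generate $L^p_\nu(T^*M)$ — e.g. as smooth compactly supported functions are $W^{1,p}$-dense and $f\mapsto df$ is continuous) with matching-norm preimages $\underline\omega_k$. Then $\|\underline\omega_k\| = \||\omega_k|_*\|$ is bounded, so by reflexivity a subsequence converges weakly to some $\underline\omega\in\Gamma_p(T^*M;\nu)$; continuity of $P_\nu$ gives $P_\nu(\underline\omega)=\omega$, which yields surjectivity, and comparing $L^p$-norms via weak lower semicontinuity with the pointwise bound $|\omega|_*\le|\underline\omega|$ forces $|\underline\omega|=|\omega|_*$ $\nu$-a.e., which is exactly \eqref{ookk}.

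It remains to produce the adjoint and its isometry property. Since $\Gamma_q(TM;\nu)$ and $L^q_\nu(TM)$ are the module duals of $\Gamma_p(T^*M;\nu)$ and $L^p_\nu(T^*M)$ respectively, the adjoint of $P_\nu$ is a continuous $L^\infty(\nu)$-linear map $i_\nu : L^q_\nu(TM) \to \Gamma_q(TM;\nu)$, uniquely determined by $\underline\omega(i_\nu(v)) = P_\nu(\underline\omega)(v)$ $\nu$-a.e.; this is the only place reflexivity of the dual pairing is invoked. For the pointwise norm, fiberwise Minkowski duality $F = (F^*)^*$ (valid by reversibility) gives $|i_\nu(v)|(x) = \sup\{\underline\omega(i_\nu(v))(x) : |\underline\omega|(x)\le 1\} = \sup\{P_\nu(\underline\omega)(v)(x) : |\underline\omega|(x)\le 1\}$. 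Using $|P_\nu(\underline\omega)|_*\le|\underline\omega|$ yields $|i_\nu(v)|\le |v|$, while feeding in the norm-optimal preimages from \eqref{ookk} for competitors $\omega$ with $|\omega|_*\le 1$ yields $|i_\nu(v)|\ge |v|$; hence $|i_\nu(v)| = |v|$ $\nu$-a.e., completing the proof.
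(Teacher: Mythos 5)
Your proposal is correct and follows essentially the same route as the source the paper itself relies on: the paper offers no proof of this theorem, stating only that the $p=q=2$ argument of \cite{LP} carries over to $1<p<\infty$, and your skeleton (construction of $P_\nu$ on simple combinations of differentials via \eqref{iinneq}, extension by density and $L^\infty(\nu)$-linearity, norm-matching preimages via fiberwise minimisation over $\ker P_\nu$ with reflexivity/weak compactness handling general $\omega$, and the adjoint's isometry via Minkowski duality) is precisely that argument with the $p$-dependent ingredients correctly isolated. The one substantive step you do not reprove --- the identity $\min_{\underline\eta\in(\ker P_\nu)(x)}F^*\bigl(x,\underline{d}f(x)+\underline\eta\bigr)=|Df|_\nu(x)$, whose ``$\le$'' direction encodes the real content of \cite{LP} --- is explicitly flagged and deferred, which matches the paper's own treatment of the entire theorem.
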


Now we are going to characterize the $p$-Laplacian operator in the  metric measure space $(M, d_F, \nu)$ associated to a geodesically complete, reversible Finsler manifold $(M,F)$ and a non-negative Radon measure $\nu$ on $M$.

\begin{theorem}\label{CharactFinsler}
Let $1 < p < \infty$ and let $\Delta_{p,\nu}$ be the $p$-Laplacian in $(M, d_F, \nu)$. For $v \in L^2(M, \nu)$, we have $-\Delta_{p,\nu} u = v$ if and only if $u \in W^{1,p}(M, d_F, \nu)$ and there exists a vector field $\underline{X} \in D^{q,2}(\underline{{\rm div}}_\nu)$ satisfying the following condition:
\begin{equation}\label{E1ChatN}
\vert \underline{X}  \vert^q  \leq \vert Du \vert^p_\nu \quad \hbox{$\nu$-a.e. in} \ M,
\end{equation}
\begin{equation}\label{E2ChatN}
- \underline{{\rm div}}_\nu(\underline{X}) = v\quad \hbox{in} \ M,
\end{equation}
\begin{equation}\label{E3ChatN}
\hbox{there exists} \ \underline{u} \in P_\nu^{-1}(du) \in  \Gamma_q(TM;\nu), \ \hbox{such that} \ \underline{u}(\underline{X}) = \vert \underline{u} \vert^p \ \hbox{$\nu$-a.e. \ in} \ M.
\end{equation}
\end{theorem}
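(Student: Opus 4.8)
The plan is to transport the abstract characterisation of $\Delta_{p,\nu}$ furnished by Theorem \ref{thm:plaplaceflow} (through the operator $\mathcal{A}_p = \partial\mathsf{Ch}_p$) into the concrete language of sections of $TM$, using the isometric correspondence between the abstract tangent module $L^q_\nu(TM)$ and the concrete module $\Gamma_q(TM;\nu)$ established in Theorem \ref{LucicPasresult}. The argument runs exactly parallel to the proof of Theorem \ref{CharactWES}, with the surjective morphism $P_\nu$ and its adjoint $i_\nu$ replacing the Euclidean identification; the only structural change is that pointwise norms are now computed via the Finsler structure $F$ and its dual $F^*$, but all the needed properties ($P_\nu(\underline{d}f)=df$, the adjunction $\underline{\omega}(i_\nu(v))=P_\nu(\underline{\omega})(v)$, the isometry $|i_\nu(v)|=|v|$, and the norm-preserving lift \eqref{ookk}) are stated abstractly and used verbatim.

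\emph{Forward direction.} Assuming $-\Delta_{p,\nu}u = v$, Theorem \ref{thm:plaplaceflow} gives $u \in W^{1,p}(M,d_F,\nu)$ and an abstract field $X \in \mathcal{D}^{q,2}(M)$ with $|X|^q \le |du|_*^p$, $-\mbox{div}(X)=v$ and $du(X)=|du|_*^p$ $\nu$-a.e. I would set $\underline{X} := i_\nu(X) \in \Gamma_q(TM;\nu)$. Since $i_\nu$ is a pointwise isometry, $|\underline{X}|^q = |X|^q \le |du|_*^p = |Du|_\nu^p$, which is \eqref{E1ChatN}. For \eqref{E2ChatN}, testing against $\varphi \in C_c^\infty(M)$ and using $P_\nu(\underline{d}\varphi)=d\varphi$ with the adjunction $d\varphi(X) = P_\nu(\underline{d}\varphi)(X) = \underline{d}\varphi(i_\nu(X)) = \underline{d}\varphi(\underline{X})$ converts the abstract integration by parts into the concrete one, forcing $-\underline{{\rm div}}_\nu(\underline{X})=v$. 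Finally, property \eqref{ookk} produces $\underline{u} \in P_\nu^{-1}(du)$ with $|\underline{u}|=|du|_*$, whence $|\underline{u}|^p = |du|_*^p = du(X) = P_\nu(\underline{u})(X) = \underline{u}(\underline{X})$, which is \eqref{E3ChatN}.

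\emph{Reverse direction.} Given $\underline{X} \in D^{q,2}(\underline{{\rm div}}_\nu)$ satisfying \eqref{E1ChatN}--\eqref{E3ChatN}, I would recover an abstract field $X \in L^q_\nu(TM)$ with $i_\nu(X)=\underline{X}$ and reverse the computation. The isometry gives $|X|^q = |\underline{X}|^q \le |Du|_\nu^p = |du|_*^p$; testing \eqref{E2ChatN} against $C_c^1(M)$ and invoking density of $C_c^1(M)$ in $W^{1,p}(M,d_F,\nu)$ recovers $-\mbox{div}(X)=v$ in the abstract sense; and \eqref{E3ChatN} with $P_\nu(\underline{u})=du$ yields $|du|_*^p = |\underline{u}|^p = P_\nu(\underline{u})(X) = du(X)$ $\nu$-a.e. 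These four conditions place $(u,v)$ in $\mathcal{A}_p=\partial\mathsf{Ch}_p$, i.e. $-\Delta_{p,\nu}u=v$.

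\emph{Main obstacle.} The delicate step is the passage $\underline{X} \mapsto X = i_\nu^{-1}(\underline{X})$ in the reverse direction: Theorem \ref{LucicPasresult} only guarantees that $i_\nu$ is an isometric embedding, not that it is surjective onto $\Gamma_q(TM;\nu)$, so a preimage need not exist for arbitrary concrete fields. I expect this to be handled exactly as in Theorem \ref{CharactWES}, where one simply selects a preimage: equivalently, one defines $X$ on $L^p_\nu(T^*M)$ by $X(\omega) := \underline{\omega}(\underline{X})$ for $\underline{\omega} \in P_\nu^{-1}(\omega)$, and the only point requiring genuine care is that this is independent of the chosen representative, i.e. that $\underline{X}$ annihilates $\ker P_\nu$. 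Once this well-definedness is secured, the remainder is a direct transcription of the Euclidean argument, with $F$ and $F^*$ in place of the Euclidean norms.
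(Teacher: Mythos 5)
Your proposal follows the paper's proof essentially verbatim: the forward direction sets $\underline{X}=i_\nu(X)$ and uses the pointwise isometry, the adjunction $\underline{\omega}(i_\nu(v))=P_\nu(\underline{\omega})(v)$ and the norm-preserving lift \eqref{ookk} exactly as the paper does, and the reverse direction likewise selects $X\in i_\nu^{-1}(\underline{X})$ and reverses the computation, using density of $C_c^\infty(M)$ in $W^{1,p}(M,d_F,\nu)$. As for the obstacle you flag, the paper does \emph{not} address it --- its proof simply writes ``Let $X \in i_\nu^{-1}(\underline{X})$'' without verifying that the preimage is nonempty (equivalently, that $\underline{X}$ annihilates $\ker P_\nu$), so your concern is legitimate and your proposed remedy (defining $X(\omega):=\underline{\omega}(\underline{X})$ on representatives and checking independence of the choice) is, if anything, more careful than the published argument.
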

\begin{proof} Suppose that $-\Delta_{p,\nu} u = v$. Then, $u \in W^{1,p}(M, d_F, \nu)$ and there exists a vector field $X \in \mathcal{D}^{q,2}(M)$ with  $| X |^q \leq | du |_*^p$ $\nu$-a.e. such that
\begin{equation}\label{E01ChatN} -\mbox{div}(X) = v, \quad  \hbox{in $M$},
\end{equation}
\begin{equation}\label{E02ChatN}
du(X) = |du|_*^p  \quad \nu\hbox{-a.e. in } M.
\end{equation}
Since $X \in L^q_\nu(T M)$, we have  $\underline{X}:= i_\nu(X) \in \Gamma_q(TM;\nu)$. Then,
$$\vert \underline{X}  \vert^q = \vert  i_\nu(X) \vert^q = \vert X \vert^q \leq | du |_*^p =\vert Du \vert^p_\nu \quad \hbox{$\nu$-a.e. in} \ M, $$
and \eqref{E1ChatN} holds.

For $f\in C_c^\infty(M)$, by \eqref{E01ChatN}, we have
$$\int_{M} f  v \, d \nu  = - \int_{M}f \mbox{div}(X) \, d\nu =  \int_{M} df(X) \, d\nu = \int_{M} \underline{d}f(\underline{X}) \, d\nu = - \int_{M} f \, \underline{{\rm div}}_\nu(\underline{X}) \, d \nu, $$
and \eqref{E2ChatN} holds.

By \eqref{ookk}, there exists $\underline{u} \in P_\nu^{-1}(du)$ such that $\vert \underline{u} \vert = \vert du \vert_*$. Then, by \eqref{E02ChatN}, we have for $\nu$-a.e. in $M$,
$$\vert \underline{u} \vert^p =  \vert du \vert_*^p = du(X) = P_\nu(\underline{u})(X) = \underline{u}(i_\nu (X)) = \underline{u}(\underline{X}),$$
and \eqref{E3ChatN} holds.

Reciprocally, suppose that $u \in W^{1,p}(M, d_F, \nu)$ and there exists a concrete vector field  $\underline{X} \in  D^{q,2}(\underline{{\rm div}}_\nu)$ satisfying \eqref{E1ChatN}, \eqref{E2ChatN} and \eqref{E3ChatN}. Let $X \in i_\nu^{-1}(\underline{X}) \in L^q_\nu(TM)$, then
$$\vert X \vert^q = \vert  i_\nu(X) \vert^q = \vert \underline{X}  \vert^q \leq \vert Du \vert^p_\nu = | du |_*^p.$$
Given $f\in C_c^\infty(\R^N)$, by \eqref{E2ChatN}, we have
$$\int_{M} v f \, d \nu = - \int_{M} f \, \underline{{\rm div}}_\nu(\underline{X}) \, d \nu = \int_{M} \underline{d}f(\underline{X}) \, d \nu =  \int_M df(X) \, d\nu =  - \int_{M} f \, \mbox{div}(X) \, d\nu.$$
Now, since $C_c^\infty(\R^N)$ is dense in $W^{1,p}(M, d_F, \nu)$, for any $g \in W^{1,p}(M, d_F, \nu)$
$$\int_{M} v g \, d \nu = - \int_{M} g \, {\rm div} (X) \, d\nu = \int_{M} dg(X) \, d \nu,$$
and therefore \eqref{E01ChatN} holds.

Finally, by \eqref{E3ChatN}, we have for $\nu$-a.e. in $\R^N$,
$$\vert du \vert_*^p = \vert \underline{u} \vert^p = P_\nu(\underline{u})(X) = du(X),$$
and \eqref{E02ChatN} holds.
\end{proof}

{ Again, using this characterisation of the operator $\Delta_{p,\nu}$ one can immediately obtain a corresponding characterisation of solutions to the Cauchy problem for the $p$-Laplacian in a Finsler manifold $(M, d_{F},\nu)$ in terms of concrete vector fields.}

\begin{remark}{\rm For $p\not=2$, the above result is new. For $p=2$ (i.e. for the heat flow), to the best of our knowledge the only results in Finsler manifolds are the ones obtained by Ohta and Sturm \cite{OS}, and the results in \cite{AIS} for the particular case $(\R^N, H)$ with $H$ a norm in $\R^N$. When $p\not=2$, to the best of our knowledge the only case that was studied is the elliptic problem for the particular case $(\R^N, H)$ with $H$ a norm in $\R^N$, see for instance \cite{BKJ} and the references therein. $\blacksquare$}
\end{remark}

\bigskip

\noindent {\bf Acknowledgment.} The first author has been partially supported by the DFG-FWF project FR 4083/3-1/I4354, by the OeAD-WTZ project CZ 01/2021, and by the project 2017/27/N/ST1/02418 funded by the National Science Centre, Poland. The second author has been partially supported by the Spanish MCIU and FEDER, project PGC2018-094775-B-100.

\end{document}